\newtheorem{theorem}{Theorem}
\newtheorem{lemma}{Lemma}
\newtheorem{proposition}{Proposition}
\newtheorem{assumption}{Assumption}
\theoremstyle{definition}
\newtheorem{definition}{Definition}
\newtheorem{remark}{Remark}
\newtheorem{example}{Example}
\newcommand{\R}{\mathbb{R}}
\newcommand{\N}{\mathbb{N}}
\newcommand{\mI}{\mathcal{I}}
\newcommand{\mB}{\mathcal{B}}
\newcommand{\mD}{\mathcal{D}}
\newcommand{\mS}{\mathcal{S}}
\newcommand{\mP}{\mathcal{P}}
\newcommand{\mN}{\mathcal{N}}
\newcommand{\mX}{\mathcal{X}}
\newcommand{\Ep}{E}
\newcommand{\KL}{\mathrm{KL}}
\newcommand{\KV}{\mathrm{KV}}
\renewcommand{\hat}{\widehat}
\renewcommand{\tilde}{\widetilde}
\newcommand{\pr}{\mathbb{P}}
\newcommand{\argmin}{\operatornamewithlimits{argmin}}
\newcommand{\mone}{\textbf{1}}
\title{Bayesian Analysis for Over-parameterized Linear Model \\ via Effective Spectra}
\author{Tomoya Wakayama$^\dagger$ \and Masaaki Imaizumi$^{\dagger,\ddagger}$}
\address{$^\dagger$RIKEN Center for Advanced Intelligence Project, $^\ddagger$The University of Tokyo}
\date{\today, \textit{Contact}: \textit{tomoya.wakayama@riken.jp, imaizumi@g.ecc.u-tokyo.ac.jp}}
\begin{document}
\maketitle

\begin{abstract}

In high-dimensional Bayesian statistics, various methods have been developed, including prior distributions that induce parameter sparsity to handle many parameters. Yet these approaches often overlook the rich spectral structure of the covariate matrix, which can be crucial when true signals are not sparse. To address this gap, we introduce a data-adaptive Gaussian prior whose covariance is aligned with the leading eigenvectors of the sample covariance. This prior design targets the data’s intrinsic complexity rather than its ambient dimension by concentrating the parameter search along principal data directions. We establish contraction rates of the corresponding posterior distribution, which reveal how the mass in the spectrum affects the prediction error bounds. Furthermore, we derive a truncated Gaussian approximation to the posterior (i.e., a Bernstein–von Mises–type result), which allows for uncertainty quantification with a reduced computational burden. Our findings demonstrate that Bayesian methods leveraging spectral information of the data are effective for estimation in non-sparse, high-dimensional settings.
\end{abstract}

\section{Introduction}

\subsection{Overview}

We consider an over-parameterized linear regression problem. Suppose we observe $n$ independent and identically distributed (i.i.d.) pairs $(y_1, \bm{x}_1),\ldots,(y_n, \bm{x}_n) \in \R \times \R^p$ generated from the linear regression model:
\begin{equation}\label{eqn-model}
y_i = \bm{x}_i^{\top} \bm{\theta}^* + \varepsilon_i, \quad (i=1,\ldots,n)
\end{equation}
where $\bm{\theta}^* \in \R^p$ is the true regression parameter vector, $\varepsilon_i\sim N(0, (\sigma^*)^2)$ are i.i.d. noise terms independent of the covariates $\bm{x}_i$, and $(\sigma^*)^2 > 0$ represents the true noise variance. Let $P^*$ denote the true data-generating distribution of $(\bm{x}_i,y_i)$ under model \eqref{eqn-model} with true parameters $(\bm{\theta}^*, (\sigma^*)^2)$. Here, the dimension $p$ is allowed to be much larger than the sample size $n$, that is, $p \gg n$. Crucially, we allow the true parameter $\bm{\theta}^*$ to be non-sparse, meaning many or all of its components may be non-zero and relevant to the response $y_i$. In this study, we investigate a Bayesian method for model \eqref{eqn-model} by constructing prior distributions for the unknown parameters and then examining the associated posterior distribution under certain assumptions on the covariates.

In over-parameterized Bayesian statistics, numerous prior distributions that leverage sparsity have been developed to effectively handle a large number $p$ of parameters. Specifically, starting with the double exponential prior for the Bayesian Lasso \citep{park2008bayesian}, many prior distributions for the linear regression problem \citep[e.g.,][]{carvalho2010horseshoe, rovckova2014emvs} have been proposed to extract information from an excess number of covariates. These methods excel at deriving insights from large datasets and determining which parameter components should be zero or non-zero. Although these approaches have demonstrated interpretability and theoretical underpinnings \citep{castillo2015bayesian,ning2020bayesian}, they require the stringent assumption that most regression parameters are zero or near zero.

In contrast, within a frequentist framework, several studies have focused on non-sparse high-dimensional models, commonly known as over-parameterized models, motivated by the success of large-scale models such as deep learning \citep{lecun2015deep}.
These studies often analyse estimators through the lens of the spectral properties of the data, that is, the eigenvalues and eigenvectors of the covariance matrix of the covariates. For instance, \cite{dobriban2018high} and \cite{hastie2022surprises} studied the ridge estimator and minimum norm interpolator of a linear regression model and derived precise error characterizations in the high-dimensional limit as $p \to \infty$. \cite{bartlett2020benign} and \cite{tsigler2020benign} further investigated the estimators for linear regression, uncovering the ``benign overfitting'' phenomenon: the prediction risk can converge to zero even when $p\gg n$ and the model perfectly fits noisy training data, with the risk governed by spectral properties summarized in an ``effective rank'' rather than the ambient dimension $p$. These results spurred a significant amount of subsequent research on non-sparse high-dimensional statistics.
However, traditional high-dimensional Bayesian methods, primarily designed for sparsity, are not directly equipped to handle such non-sparse, over-parameterized scenarios or leverage spectral information in the same manner. Therefore, developing Bayesian methods tailored to the non-sparse scenarios, especially by introducing suitable prior distributions, is crucial.

In this work, we propose and analyse a Bayesian approach specifically designed for potentially non-sparse, over-parameterized linear regression. We employ a $p$-dimensional centred Gaussian prior for $\bm{\theta}$, whose covariance structure is determined by a low-rank approximation of the empirical covariance matrix of the covariates. Specifically, the prior mass concentrates on a data-dependent low-dimensional subspace (low-rankness). The rank is assigned a prior distribution and inferred from the data within a hierarchical Bayesian framework. We then derive the contraction rates of the posterior distribution towards the true parameter, which characterizes the rate by an effective rank of the covariance matrix.
Furthermore, we establish a Bernstein–von Mises type theorem for this setting, providing an explicit truncated Gaussian approximation to the posterior distribution of $\bm{\theta}$~\citep{ghosal1999asymptotic,bontemps2011bernstein}.
For practical use, we develop a method to select hyperparameters of the prior distribution.

We highlight three key advantages of our Bayesian method: (i) It adapts to the intrinsic structure characterized by the spectrum rather than the nominal data dimension, leading to greater efficiency in non-sparse high-dimensional settings. Specifically, the upper bound on prediction error depends primarily on an effective rank, independent of the ambient dimension, as shown by our theoretical results. (ii) It relaxes certain data requirements imposed in existing studies. In particular, our framework removes the sub-Gaussian covariate assumption required by previous over-parameterized regression theories. (iii) It provides realistic and effective uncertainty quantification in high-dimensional, non-sparse setups. The Bernstein–von Mises approximation can be obtained directly without onerous Bayesian computations, which is especially advantageous in high-dimensional contexts.

We discuss the connection of the proposed method to prior works. The prior distribution's anisotropy has been empirically demonstrated to improve prediction performance in complicated over-parameterized models~\citep{zhao2019adaptive,louizos2017multiplicative}.
Furthermore, the maximum a posteriori (MAP) estimator derived from our anisotropic prior distribution approximates the Bayes optimal two-layer linear neural network estimator, aside from truncation, in some cases, especially when the covariates are aligned with the true parameters~\citep{suzuki2024optimal}.
Low-rankness is important for stabilizing high-dimensional covariance matrices, ensuring numerical robustness. Moreover, as \cite{castillo2024bayesian} states ``one needs to take structural assumptions \dots when proposing a prior distribution,'' our approach explicitly incorporates the effective dimension as a structural assumption when constructing a prior, in line with benign overfitting~\citep{bartlett2020benign, tsigler2020benign}. Also, we permit the rank to increase with the sample size because selecting an excessive number of principal components can enhance predictive performance~\citep{Xu2019pca}, and we determine it adaptively via Bayesian updating.

\subsection{Related Works} \label{sec:related_work}

The literature on Bayesian methods for linear regression models is abundant. For high-dimensional problems, prior distributions considering the sparsity structure have been developed, such as the Laplace \citep{park2008bayesian}, horseshoe \citep{carvalho2010horseshoe}, spike-and-slab \citep{rovckova2014emvs,nie2023bayesian}, normal-gamma \citep{brown2010inference}, double-Pareto \citep{armagan2013posterior}, Dirichlet--Laplace (DL) \citep{bhattacharya2015dirichlet}, horseshoe+ \citep{bhadra2017horseshoe+}, and triple gamma priors \citep{cadonna2020triple}.

Regarding the asymptotic behaviour of the posterior distribution, \cite{ghosal1997normal,ghosal1999asymptotic,ghosal2000asymptotic} examined an approximation of the posterior distribution using a Gaussian distribution in various situations, including high-dimensional settings (not over-parameterized settings).
\cite{ghosal2000convergence} provided a general framework and established widely used sufficient conditions for proving posterior contraction, significantly advancing Bayesian asymptotic theory.
\cite{castillo2012needles,belitser2020needles} proved posterior concentration and variable selection properties for certain point-mass priors.
\cite{castillo2015bayesian,martin2017empirical,belitser2020empirical,belitser2020needles,wu2023statistical} achieved posterior concentration and variable selection in high-dimensional linear models.
\cite{armagan2013generalized} demonstrated posterior consistency with several shrinkage priors.
\cite{van2014horseshoe} showed posterior concentration with the horseshoe prior. \cite{bhattacharya2015dirichlet} showed a similar result using the DL prior.
\cite{bai2018high,song2022nearly} considered a general class of continuous shrinkage priors.
\cite{ning2020bayesian} considered variable selection with an unknown covariance matrix and established posterior consistency results.
\cite{zhang2014confidence,yang2019posterior} studied the efficiency and Gaussian approximation of one coordinate of the high-dimensional parameters.
\cite{banerjee2021bayesian} presented a detailed review of these references.

There has been a rapidly growing body of theoretical work on the frequentist analysis of over-parameterized models, specifically concerning benign overfitting.
A theoretical study closely related to ours is \cite{bartlett2020benign}, which showed that the excess risk converges to zero in an over-parameterized linear model.
This phenomenon has been studied in other situations such as regularized regression~\citep{tsigler2020benign,koehler2021uniform,wang2021tight,li2021minimum,chatterji2022foolish}, logistic regression~\citep{chatterji2021finite,muthukumar2021classification}, and kernel regression~\citep{liang2020just,liang2020multiple}, as well as regressions for dependent data \citep{nakakita2022benign,tsuda2023benign}.

\subsection{Notation}
For $q\in \mathbb{N}$ and any vector $\bm{v}$, $\|\bm{v}\|_q$ denotes the $\ell_q$-norm of $\bm{v}$. 
For any vector $\bm{v}$ and symmetric matrix of the same dimension $A$, we define $\|\bm{v}\|_{A} = \sqrt{\bm{v}^{\top}A\bm{v}}$. For any matrix $A$, $\|A\|_{\mathrm{op}}$ denotes its operator norm induced by the Euclidean norm, defined as $\|A\|_{\mathrm{op}} = \sup_{\|\bm{v}\|_2 = 1} \|A\bm{v}\|_2$. For sequences $\{a_n\}_{n \in \N}$ and $\{b_n\}_{n \in \N}$, $a_n \gtrsim b_n$ (or $b_n \lesssim a_n$) means the existence of a constant $c > 0$ such that $a_n \geq c b_n$ for all $n \geq \overline{n}$ with some finite $\overline{n} \in \N$.
$a_{n}=o(b_{n})$ indicates that $|a_{n}/b_{n}|\to 0$ as $n \to \infty$, $a_{n}=\omega(b_{n})$ means $|a_{n}/b_{n}|\to\infty$ as $n \to \infty$, and $a_{n}=O(b_{n})$ represents that $|a_{n}| \lesssim |b_{n}|$. For any square matrix $A$, let $A^{-1}$ be the inverse of $A$ (if it exists), $A^{\dagger}$ be the Moore--Penrose pseudoinverse, and $\mathrm{tr}(A)$ be the trace of $A$. For a pseudo-metric space $(S,d)$ and a positive value $\delta$, $\mN(\delta, S,d)$ denotes the covering number, that is, the minimum number of closed balls of radius $\delta$ in terms of $d$ that cover $S$. $\mB(X)$ denotes the Borel $\sigma$-field on a topological space $X$.
For any logical statement $E$, $\mone\{E\}$ denotes the indicator function, which equals $1$ if $E$ is true and $0$ otherwise. For probability measures $\Pi_1$ and $\Pi_2$ on the same measurable space $(\mX,\mB)$, the total variation distance is given by $\|\Pi_1-\Pi_2\|_{\mathrm{TV}}:= \sup_{B\in\mB}| \Pi_1(B)-\Pi_2(B)|$, and the Kullback--Leibler (KL) divergence and variation between them are respectively defined as $\KL(\Pi_1, \Pi_2)=\int_{\mX} \log(\mathrm{d}\Pi_1/\mathrm{d}\Pi_2) \mathrm{d}\Pi_1$ and $\KV(\Pi_1, \Pi_2)=\int_{\mX} \{\log(\mathrm{d}\Pi_1/\mathrm{d}\Pi_2)-\KL(\Pi_1, \Pi_2)\}^2\mathrm{d}\Pi_1$ if $\Pi_1$ is absolutely continuous with respect to $\Pi_2$; otherwise, both are $\infty$. 

\section{Setup and Proposed Method} \label{sec:pri}

\subsection{Setup}
We consider a linear regression model~\eqref{eqn-model} that generates independent and identically distributed samples $\mD = \{(y_i, \bm{x}_i)\}_{i=1}^n$, and we perform predictions based on the samples in a Bayesian manner. Suppose that $\bm{x}_i$ is a $p$-dimensional random vector with a non-atomic distribution and that both $\bm{x}_i$ and $y_i$ have zero mean. Let $\Sigma := \Ep[\bm{x}_i\bm{x}_i^{\top}]$ be the expected covariance matrix of the covariate $\bm{x}_i$. We adopt a setting where the dimension $p$ increases and the covariance matrix $\Sigma$ changes as the sample size $n$ increases.

We measure the distance of a parameter vector $\bm{\theta}$ from the true parameter $\bm{\theta}^*$ based on the norm $\|\cdot\|_\Sigma$ weighted by the covariance matrix $\Sigma$. Assessing this norm has several advantages: it is consistent with the predictive risk, that is, we have 
\begin{equation}
    \|\bm{\theta} - \bm{\theta}^*\|_\Sigma^2 = \Ep\bigl[\bigl\{\bm{x}^{\top}({\bm{\theta}}-\bm{\theta}^*)\bigl\} ^2\bigl], \label{def:sigma_norm}
\end{equation}
where $\bm{x}$ in the expectation follows the marginal distribution of $\bm{x}_i$ in the regression model \eqref{eqn-model}. 

We consider the spectral decomposition of $\Sigma$ as follows:
\begin{equation}
    \Sigma = \sum_{j=1}^{p}\lambda_j\bm{v}_j\bm{v}_j^{\top}, 
\end{equation}
where $\{\lambda_j\}_{j=1}^p$ are the eigenvalues sorted in descending order as $\lambda_1 \ge \lambda_2 \ge \ldots \ge \lambda_p \geq 0$, and $\{\bm{v}_j\}_{j=1}^p$ are the corresponding $p$-dimensional eigenvectors. $\{\lambda_j\}_{j=1}^p$ and $\{\bm{v}_j\}_{j=1}^p$ depend on the dimension $p$, and thus implicitly on the sample size $n$ under our setting.

\subsection{Preparation: Empirical Estimation of Covariance Matrix} \label{sec:empirical_estimation_Sigma}

We prepare an empirical estimate of the covariance matrix $\Sigma$ for the prior design (developed in Section~\ref{sec: prior_design}).
First, we split the dataset $\mD$ into $\mD_1$ and $\mD_2$. 
The split ratio is arbitrary as long as it does not depend on $n$.
Without loss of generality, we assume that $n$ is even and define $\mD_1=\{y_i, \bm{x}_i\}_{i=1}^{n/2}$ and $\mD_2 = \{y_i, \bm{x}_i\}_{i=n/2 + 1}^{n}$. $\mD_1$ is used to construct the prior distribution, and $\mD_2$ is used to form the likelihood hereafter. This sample splitting is important for simplifying the theoretical analysis.

We estimate the eigenvalues and eigenvectors using $\mD_1$. We consider the empirical analog $\hat{\Sigma} := (2/n) \sum_{i=1}^{n/2}(\bm{x}_i - \bar{\bm{x}})(\bm{x}_i - \bar{\bm{x}})^{\top}$ of $\Sigma$ using $\mD_1$, where $\bar{\bm{x}} := (2/n)\sum_{i=1}^{n/2} \bm{x}_i$ is the sample mean of the covariates in $\mD_1$. We also define its spectral decomposition: $\hat{\Sigma} = \sum_{j=1}^{p}\hat{\lambda}_j\hat{\bm{v}}_j\hat{\bm{v}}_j^{\top}$, where $\{\hat{\lambda}_j\}_{j=1}^p$ and $\{\hat{\bm{v}}_j\}_{j=1}^p$ denote the eigenvalues $\hat{\lambda}_1 \geq \hat{\lambda}_2 \geq \cdots \hat{\lambda}_p \geq 0$ and eigenvectors of $\hat{\Sigma}$. Since $p > n/2$, we have $\hat{\lambda}_j=0$ for $j=n/2+1,\ldots,p$.
Under certain conditions, the empirical eigenvalue $\hat{\lambda}_j$ and eigenvector $\hat{\bm{v}}_j$ converge in probability to $\lambda_j$ and $\bm{v}_j$, respectively \citep[e.g.,][]{loukas2017close}.

We further define low-rank approximations of $\Sigma$ and $\hat{\Sigma}$ with a truncation level $k \in \{1,2,\ldots,p\}$, namely, we define the $k$-rank approximations as
\begin{equation}
\Sigma_{1:k}:=\sum_{j=1}^{k}\lambda_j\bm{v}_j\bm{v}_j^{\top} \quad \mbox{and}\quad  \hat{\Sigma}_{1:k} := \sum_{j=1}^{k}\hat{\lambda}_j\hat{\bm{v}}_j\hat{\bm{v}}_j^{\top}.
\end{equation}
We denote $\Sigma_{k+1:p} := \Sigma - \Sigma_{1:k}$ and $\hat{\Sigma}_{k+1:p} := \hat{\Sigma} - \hat{\Sigma}_{1:k}$.
Many studies have studied the low-rank nature of matrices in the Bayesian context \citep[][]{alquier2013bayesian,trippe2019lr}.  

\subsection{Prior Design}\label{sec: prior_design}
We construct a prior distribution based on spectra (eigenvalues) of the covariance matrix of the covariates. To this end, we render the prior distribution sensitive solely to the $k$-dimensional principal eigenspace of the empirical covariance matrix of $\{\bm{x}_i\}_{i\in\mD_1}$, which is referred to as \textit{effective spectra}. Intuitively, the prior distribution captures large eigenvalues, which typically represent the primary information of size $O(n)$, from the dataset. 
We also determine the level $k$ via posterior inference using hierarchical priors. 

Our method transfers the principle of benign overfitting \citep{bartlett2020benign,tsigler2020benign}, which achieves consistent estimation in high-dimensional settings using spectral information, to the Bayesian context. The benign overfitting theory shows that a (non-Bayesian) minimum norm interpolator extracts an $n$-dimensional subspace of the $p$-dimensional parameter space to achieve consistency. By contrast, our prior distribution seeks a $k(<n/2)$-dimensional subspace, which could be more critical.

\textbf{Prior for Regression Parameter with Effective Spectra:} 
We develop a prior distribution $\Pi_{\vartheta}(\cdot)$ on the $k$-dimensional subspace $S_{\mD_1,k} := \mathrm{span}\{\hat{\bm{v}}_1, \hat{\bm{v}}_2,\ldots, \hat{\bm{v}}_k \} \subset \mathbb{R}^p$ using a truncated and centred Gaussian distribution. In this subspace, every $\bm{\theta}\in S_{\mD_1,k}$ is uniquely represented as  
\begin{equation*}
    \bm{\theta} = \sum_{i=1}^k \beta_i\hat{\bm{v}}_i,
\end{equation*}
with a coefficient vector $\bm{\beta}_{1:k}=(\beta_1,\dots,\beta_k)^{\top}\in \R^k$.
Then, we define a density function of the prior distribution on the coefficient vector $\bm{\beta}_{1:k}$ as
\begin{equation*}
    \pi_{\mathfrak{b}}(\bm{\beta}_{1:k}\mid \mD_1,k)
    =\frac{\exp\left\{-\bm{\beta}_{1:k}^\top \tilde{\Sigma}_{1:k}^{-1}\bm{\beta}_{1:k}\right\}\mone\left\{  \|\bm{\beta}_{1:k}\|_{\tilde{\Sigma}_{1:k}}\le R \right\}}{\displaystyle \int_{\mathbb{R}^k}\exp\left\{-\bm{\beta}_{1:k}'^\top \tilde{\Sigma}_{1:k}^{-1} \bm{\beta}_{1:k}'\right\}\mone\left\{\|\bm{\beta}_{1:k}'\|_{\tilde{\Sigma}_{1:k}} \le R \right\}\mathrm{d}\bm{\beta}_{1:k}'},
\end{equation*}
where $\tilde{\Sigma}_{1:k}=\mathrm{diag}(\hat{\lambda}_1,\dots,\hat{\lambda}_k)$ and $R > 0$ denotes the pre-specified radius of the support, which is a sufficiently large constant. We set the matrix $\tilde{\Sigma}_{1:k}$ as the covariance matrix of this prior distribution, and it has the following roles: (i) it approximately utilizes the spectrum of $\Sigma$ because $\tilde{\Sigma}_{1:k}$ comprises the estimated eigenvalues, which can be regarded as an empirical Bayesian method similar to the $g$-prior \citep{zellner1986assessing}, and (ii) this prior distribution is supported on the $k$-dimensional principal eigenspace of $\hat{\Sigma}$, which acts as dimensionality extraction. Informally, for the prior distribution of $\bm{\theta}$, we consider an anisotropic Gaussian distribution that favours the principal eigen-directions of $\bm{x}$, $N(\bm{0},\hat{\Sigma}_{1:k})$, with rank and support constraints. Remark that while our method is sparse in the frequency domain, the true parameters themselves are not assumed to be sparse, even in this sense. 

Next, we explicitly specify the prior distribution for $\bm{\theta}$. Let $T_{\mD_1,k}:(\R^k,\mB(\R^k))\to (S_{\mD_1,k}, \mB(S_{\mD_1,k}))$ be a bijective linear map as $T_{\mD_1,k}(\bm{\beta}_{1:k}) = \sum_{i=1}^k \beta_i\hat{\bm{v}}_i$ and $\Pi_{\mathfrak{b}}$ be a probability measure defined as $\Pi_{\mathfrak{b}}(B\mid \mD_1,k)=\int_B \pi_{\mathfrak{b}}(\bm{\beta}_{1:k}\mid \mD_1,k)\mathrm{d}\bm{\beta}_{1:k}$ for any Borel set $B\in \mB(\R^k)$. The prior distribution for $\bm{\theta}$ is defined as a pushforward measure $\Pi_{\vartheta}(B\mid \mD_1,k) = \Pi_{\mathfrak{b}}( T_{\mD_1,k}^{-1}(B\cap S_{\mD_1,k}) \mid \mD_1,k)$ for any Borel set $B \in \mB(\R^p)$; namely,
\begin{equation}\label{prior:theta}
        \Pi_{\vartheta}(B\mid \mD_1,k) = \frac{\int_{T_{\mD_1,k}^{-1}(B\cap S_{\mD_1,k})} \exp\left\{-\bm{\beta}_{1:k}^\top \tilde{\Sigma}_{1:k}^{-1}\bm{\beta}_{1:k}\right\} \mone\left\{\|\bm{\beta}_{1:k}\|_{\tilde{\Sigma}_{1:k}} \le R \right\}\mathrm{d}\bm{\beta}_{1:k} }{\int_{ \R^k} \exp\left\{-\bm{\beta}_{1:k}'^\top \tilde{\Sigma}_{1:k}^{-1}\bm{\beta}_{1:k}'\right\} \mone\left\{\|\bm{\beta}_{1:k}'\|_{\tilde{\Sigma}_{1:k}} \le R \right\} \mathrm{d}\bm{\beta}_{1:k}' }.
\end{equation}
Since $\Pi_{\vartheta}(\cdot\mid \mD_1,k)$ is the pushforward measure of $\Pi_{\mathfrak{b}}(\cdot\mid \mD_1,k)$ under a bijective measurable map, it preserves non-negativity, countable additivity, and normalization, and is well-defined as a probability measure. 
If we set $R=\infty$, the prior distribution is a degenerate multivariate Gaussian distribution in $\R^p$ whose support is confined to the $k$-dimensional subspace.

\begin{figure}[t]
  \begin{center}
  \includegraphics[width=0.8\textwidth]{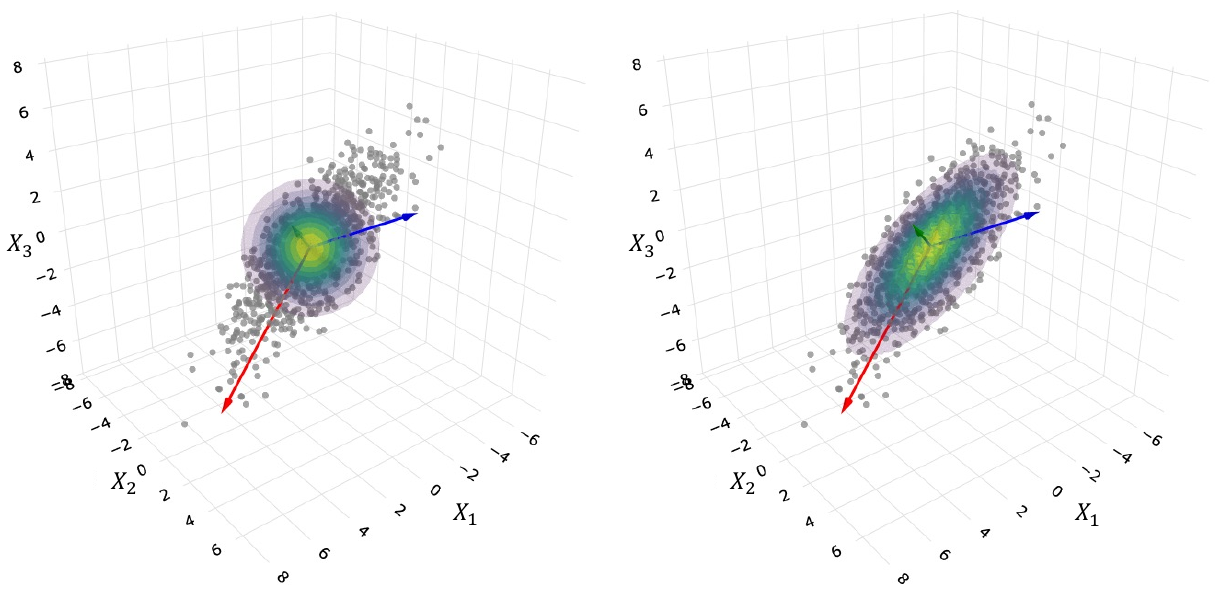}
  \end{center}
  \caption{Left: 3D isotropic normal distribution. Right: Proposed distribution. Red, blue, and green arrows represent the first to third principal components of the black data points in 3D space. The proposed distribution assigns weights along the principal component directions proportional to the eigenvalues. \label{fig:prior}}
\end{figure}

\textbf{Prior for Truncation Level:}
Second, we develop a prior distribution for the truncation level $k$ of $\hat{\Sigma}_{1:k}$, which regulates the information in the prior for $\bm{\theta}$. Given a pair $(L_{\kappa}, U_{\kappa})$ such that $1\leq L_{\kappa} \leq U_{\kappa} < n/2$ and a measurable function $f:\N\to\R$, we define the prior distribution for $k$ as a categorical distribution over the discrete set $\{L_{\kappa},\ldots U_{\kappa}\}$, where the probability function is
\begin{equation} \label{def:prior_k}
\pi_{\kappa}(k) = \frac{e^{f(k)}}{\sum_{k=L_{\kappa}}^{U_{\kappa}} e^{f(k)}},\quad ( k = L_{\kappa},\ldots U_{\kappa}).
\end{equation}
Restricting $k$ to $\{L_{\kappa},\ldots U_{\kappa}\}$ is essential because neither an overabundance nor a lack of information will lead to successful predictions. We allow $L_{\kappa}$ and $U_{\kappa}$ to depend on $n$, with $L_{\kappa} \to \infty$ as $n \to \infty$.

\textbf{Prior for Noise Variance:}
We specify the prior distribution for the variance in the noise term $\varepsilon_i$; that is, we consider the inverse-Gaussian prior $\Pi_{\varsigma^2}(\cdot)$ with the following density function: 
\begin{align}
    \pi_{\varsigma^2}(\sigma^2) = \sqrt{\frac{\xi}{2 \pi \sigma^6}}\exp \left( \frac{- \xi (\sigma^2 - \eta)^2 }{2 \eta^2 \sigma^2} \right), \label{def:prior_sigma}
\end{align}
where 
$\eta>0$ and $\xi>0$ denote the mean and shape parameters, respectively, which could be assigned arbitrarily.
An important property of the inverse-Gaussian distribution is the rapidly decreasing tails on both sides, whereas the inverse gamma distribution, which is a conjugate prior distribution for the Gaussian likelihood, has a light lower tail and a heavy upper tail.
From a Bayesian perspective, this prior reflects the belief that the noise variance parameter does not have extremely large or small values. This prior distribution for the variance parameter of the Gaussian distribution is also employed in \cite{szabo2013empirical, ning2020bayesian}.

\textbf{Joint Prior Distribution for Parameters:}
We introduce a joint prior distribution $\Pi_{\vartheta,\varsigma^2}(\cdot)$ of $(\bm{\theta}, \sigma^2)$, which is useful in various analyses. It is defined as
\begin{equation}
    \Pi_{\vartheta,\varsigma^2}\left(  B \times B' \mid \mD_1 \right) \\
    = \sum_{k=L_{\kappa}}^{U_{\kappa}}\int_{T_{\mD_1,k}^{-1}(B\cap S_{\mD_1,k})} \int_{B'} \pi_{\mathfrak{b}}(\bm{\beta}_{1:k}\mid \mD_1,k) \pi_{\kappa}(k)\pi_{\varsigma^2}(\sigma^2)  \mathrm{d} \sigma^2 \mathrm{d} \bm{\beta}_{1:k}, 
\end{equation}
for any Borel sets $B \in \mB(\R^p)$ and $B' \in \mB(\R_+)$.

\subsection{Posterior} \label{sec:posterior}

We present a posterior distribution corresponding to our prior design above.
Under the regression model~\eqref{eqn-model}, the likelihood is $\prod_{(y_i, \bm{x}_i) \in \mD_2}\phi(y_i;\bm{x}_i^{\top}\bm{\theta},\sigma^2)$, where $\phi(a;b,c)$ denotes the probability density of the Gaussian distribution evaluated at $a$ with mean $b$ and variance $c$.

We consider the posterior distribution associated with the prior distribution and the likelihood function. Its density function over the parameter space $\bigcup_{k=L_{\kappa}}^{U_{\kappa}}(\R^k \times \R_+ \times \{k\})$ is given by Bayes' theorem as
\begin{align}
    &\pi(\bm{\beta}_{1:k},\sigma^2,k\mid \mD) \\
    &= \frac{\pi_{\mathfrak{b}}(\bm{\beta}_{1:k}\mid \mD_1,k) \pi_{\kappa}(k)\pi_{\varsigma^2}(\sigma^2) {\prod_{(y_i, \bm{x}_i) \in \mD_2}\phi(y_i;\bm{x}_i^{\top}T_{\mD_1,k}(\bm{\beta}_{1:k}),\sigma^2 )}}{ \sum_{k'=L_{\kappa}}^{U_{\kappa}}\int_{\R^{k'}} \int_{\R_+} \pi_{\mathfrak{b}}(\bm{\beta}_{1:k'}' \mid \mD_1,k') \pi_{\kappa}(k')\pi_{\varsigma^2}(\sigma'^2) {\prod_{(y_i, \bm{x}_i) \in \mD_2}\phi(y_i;\bm{x}_i^{\top}T_{\mD_1,k'}(\bm{\beta}_{1:k'}'),\sigma'^2 )}  \mathrm{d} \sigma'^2 \mathrm{d} \bm{\beta}_{1:k'}'  } , \label{eq:density_posterior}
\end{align}
The first three factors in the numerator express the joint prior density of the parameters, and the product term is equivalent to the likelihood. We denote the posterior distribution of $\bm{\theta}$ as 
\begin{equation}
    \Pi \left( B \mid \mD \right) = \sum_{k=L_{\kappa}}^{U_{\kappa}}\int_{T_{\mD_1,k}^{-1}(B\cap S_{\mD_1,k})} \int_{\R_+} \pi(\bm{\beta}_{1:k},\sigma^2,k \mid \mD)  \mathrm{d} \sigma^2 \mathrm{d} \bm{\beta}_{1:k} \label{eq:definition_posterior}
\end{equation}
for any Borel set $B \in \mB(\R^p)$. The posterior distribution of $\sigma^2$ is defined in the same manner. The method for selecting hyperparameters $R, L_\kappa$, and $U_\kappa$ will be explained in Section~\ref{sec:hyperparameter}.

\begin{remark}[Cross-fitting]
To mitigate information loss due to data splitting, we can utilize cross-fitting \citep{chernozhukov2018double}. 
For point estimation, a simple approach is to swap $\mD_1$ and $\mD_2$, construct another posterior estimator, and average it with the original posterior estimator, which maintains consistency if each estimator does so individually. If one is interested in the posterior distribution, we can also achieve an integrated posterior via the Wasserstein barycenter \citep{Srivastava2018scalable} or Bayesian stacking \citep{10.1214/17-BA1091}. 
\end{remark}

\section{Theoretical Guarantee}\label{sec:res}

In this section, we examine the theoretical properties of the posterior inference based on the proposed prior distributions. To this end, we introduce some notions to describe the assumptions and the theoretical results.
Recall that we study the risk with respect to the norm $\|\cdot\|_\Sigma$ as in \eqref{def:sigma_norm}, based on the predictive risk and that $\Sigma$, $\{\lambda_i\}_i$ and the related quantities vary as $n$ increases.
Remark that our theoretical results do not require the sparsity of the true parameter.

\subsection{Baseline Case: sub-Gauss and trace-class assumptions}\label{subsec:baseline}
We begin with a statement of the theorem by considering the case in which $\bm{x}$ is sub-Gaussian and the covariance matrix is trace class; namely, the sum of its eigenvalues is finite. 
These settings are often employed in the field of high-dimensional statistics and over-parameterized regression theory \citep{koltchinskii2017concentration,bartlett2020benign,tsigler2020benign}.

Formally, we make the following two assumptions regarding the distribution of the covariates.
\begin{assumption} 
\label{ass:DGP}
The covariate vector $\bm{x}\in\mathbb{R}^p$ is generated from a centred sub-Gaussian distribution; that is, there exists a constant $b\in\mathbb{R}$ such that $\Ep\left[ \exp(\bm{c}^{\top} \bm{x}) \right] \le \exp(\|\bm{c}\|_2^2 b^2/2)$ holds for all $\bm{c}\in \mathbb{R}^p.$
\end{assumption}

\begin{assumption}
\label{ass:trace}
The covariance matrix $\Sigma$ is a trace-class operator; that is, we have $\limsup_{p\ge 1}\sum_{k=1}^{p} \lambda_k < \infty $.
\end{assumption}
These conditions are fairly standard and are often stipulated for the use of concentration inequalities \citep[e.g.,][]{li2021towards,cheng2022dimension}. We suppose they hold for every $n \in \N$, even if the distributions of $\bm{x}$ and $\Sigma$ depend on $n$. We relax these assumptions in Section~\ref{sec:relax}.

\begin{definition}[Effective Ranks (Def. 3 of~\cite{bartlett2020benign})]
For a covariance matrix $\Sigma \in \R^{p \times p}$, we define two types of effective ranks of $\Sigma$ as follows:
    \begin{equation*}
        r_k(\Sigma) = \frac{\sum_{i>k} \lambda_i }{\lambda_{k+1}},\quad \quad R_k(\Sigma) = \frac{(\sum_{i>k} \lambda_i)^2 }{\sum_{i>k} \lambda_i^2} . 
    \end{equation*}
\end{definition}
Both indicators measure the complexity of the matrix.
$r_k(\Sigma)$ represents the magnitude of the eigenvalues after the $k$-th one relative to the $(k+1)$-th eigenvalue, while $R_k(\Sigma)$ reflects the uniformity of the eigenvalue decay after the $k$-th one (a smaller value indicates less uniformity or faster decay).

\subsection{Posterior Contraction in Baseline Case} \label{sec:posterior_contraction}

We present a contraction theorem for the developed posterior distribution.
In preparation, we define an effective variance bound $V_n$ and an effective bias bound $B_n$ that capture the contributions of variance and bias components, respectively, to the contraction rate. These bounds depend on the spectra of $\Sigma$ via the eigenvalues $\{\lambda_i\}_i$ as well as the effective ranks $r_k(\Sigma)$ and $ R_k(\Sigma)$.
Also, the hyperparameters $(L_{\kappa}, U_{\kappa})$ and $\pi_{\kappa}$ affect the terms. Specifically, with $\mI := \{ i=1,\ldots,U_{\kappa} : n  ({\lambda_i \sum_{i>L_{\kappa}} \lambda_{i}^2})^{1/2} > ( \sum_{i>L_{\kappa}} \lambda_{i})^{3/2} \}$, we define the bounds for $\varepsilon > 0$ as follows:
\begin{align}
    V_n(\varepsilon) &:= \sum_{i\in\mI} \frac1n \log \left( \frac{n\sqrt{\lambda_i} \sqrt{\sum_{i>L_{\kappa}} \lambda_{i}^2}}{ ( \sum_{i>L_{\kappa}} \lambda_{i})^{3/2}  } \right) + \frac{U_{\kappa}+\log( n/2\varepsilon)}{n} +\frac{n \sum_{i>L_{\kappa}} \lambda_{i}^2 }{( \sum_{i>L_{\kappa}} \lambda_{i})^2},\label{eq:V_n} \\
    B_n &:= \frac{1}{n}\log\frac{1}{\pi_{\kappa}(L_{\kappa})} + \frac{\| \bm{\theta}^* \|_2^2}{n}\sum_{i=1}^{L_{\kappa}} \frac{1}{{\lambda}_i} + \frac{L_{\kappa}\log n}{n}. \label{eq:B_n}
\end{align}
They are upper bounds on the bias and variance of the estimation in the over-parameterization setting; \cite{tsigler2020benign} derives similar upper bounds on the bias and variance of the ridgeless estimator, and our definition has a correspondence with them. A detailed comparison will be given later.

Using the bounds, we obtain the following theorem on posterior contraction:
\begin{theorem}\label{thm:poscon}
    Consider the regression model~\eqref{eqn-model} and the posterior distributions of $\bm{\theta}$ and $\sigma^2$ with $R\le\infty$. Suppose that Assumptions~\ref{ass:DGP}~and~\ref{ass:trace} hold and $R$ satisfies $\| \bm{\theta}^* \|_{\Sigma} < R/2$ and $\| \bm{\theta}^* \|_2 < \infty$. For any sequence $\{\varepsilon_n\}_{n \in \N}$ satisfying that for any $\delta_1, \delta_2 > 0$ and $M > 0$, there exists $\overline{n}\in\N$ such that for any $n\ge \overline{n}$, $\varepsilon_n<\delta_1$, $n\varepsilon_n^2> M$, $\sqrt{(r_0(\Sigma) + 1) / n\lambda_{L_{\kappa}+1}^2} < \delta_2$, $n/\lambda_{L_{\kappa}} > r_{L_{\kappa}}(\Sigma) R_{L_{\kappa}}(\Sigma)$, ${\varepsilon}_n^2 > 4\| \bm{\theta}^* \|_2^2 \sum_{i>L_{\kappa}} {\lambda}_i$, and 
    \begin{align}
        \max\{ V_n(\varepsilon_n), B_n\} \le \varepsilon_n^2, \label{inequalities:eps_bias_variance}
    \end{align}
    we have the following as $n \to \infty$, for some constant $C>0$:
\begin{align}
    \Pi\Big(\{\bm{\theta}:\|\bm{\theta}-\bm{\theta}^*\|_{\Sigma}> C\varepsilon_n \}\mid \,\mD\Big) \overset{P^*}{\longrightarrow} 0. \label{eq:main1}
\end{align}
Additionally, for the posterior distribution of $\sigma^2$, we have, for some constant $C>0$:
\begin{align}    
\Pi\Big(\{\sigma^2:|\sigma^2-(\sigma^{*})^2|> C \varepsilon_n\} \mid \mD\Big) \overset{P^*}{\longrightarrow} 0. \label{eq:main2}
\end{align}
\end{theorem}

Regarding the interpretation of additional conditions in the statement, $n/\lambda_{L_{\kappa}} > r_{L_{\kappa}}(\Sigma) R_{L_{\kappa}}(\Sigma)$ ensures the eigenvalue decay is not too slow. ${\varepsilon}_n^2 > 4\| \bm{\theta}^* \|_2^2 \sum_{i>L_{\kappa}} {\lambda}_i$ implies that the contraction rate is hindered by the contribution of the tail eigenspace. Remark that under Assumption~\ref{ass:trace} and assuming $L_{\kappa} \to \infty$ as $n\to \infty$, $4\| \bm{\theta}^* \|_2^2 \sum_{i>L_{\kappa}} {\lambda}_i$ can converge to zero.

Theorem~\ref{thm:poscon} describes the contraction of the posterior distribution at the rate $\varepsilon_n$. The contraction rate $\varepsilon_n$ is mainly designed to bound the effective bias and variance bounds as in \eqref{inequalities:eps_bias_variance}. The convergences are driven by the whole dataset $\mD$ for calculating the posterior. The convergence \eqref{eq:main1} shows that the posterior distribution of the regression parameter concentrates in the region where the predictive risk vanishes.
This result is a Bayesian counterpart of vanilla benign overfitting \citep[e.g.,][]{bartlett2020benign,tsigler2020benign}. The convergence~\eqref{eq:main2} represents the posterior contraction of the error variance. 
Section~\ref{sec:proof-outline} discusses the technical points, and the appendix provides the full proof. Also, we provide examples of $\Sigma$ and our prior that satisfy the theorem's assumptions as well as the effective variance and bias bounds in the appendix, and present several important examples related to the generalized version of the theorem in Section~\ref{sec:proof-outline}.

\subsection{Advanced Case: Beyond sub-Gauss and trace-class assumptions}
\label{sec:relax}
We relax the requirements of sub-Gaussianity on the covariate $\bm{x}_i$ and the trace-class property on the covariance $\Sigma$. 
In particular, we introduce a new assumption and demonstrate posterior contraction based on it, instead of relying on Assumptions~\ref{ass:DGP}~and~\ref{ass:trace}. This extension is practically important, although most theories of over-parameterization assume (sub-)Gaussianity \citep{bartlett2020benign, koehler2021uniform}.

In this section, we primarily focus on the case where $\Sigma$ is not trace-class. More precisely, we consider $\Sigma$ depending on $n$, which satisfies, 
as $n \to \infty$,
\begin{align}
    \mathrm{tr}(\Sigma) \to \infty. \label{def:non-trace_class}
\end{align}
Although the results in this section can be easily extended to $\Sigma$ in the trace-class setting, we consider this non-trace-class case to keep the presentation simple.

First, we introduce assumptions regarding the estimation error of $\Sigma$, which simultaneously characterize both the distribution of the covariate $\bm{x}_i$ and the covariance structure $\Sigma$.
Recall that $\hat{\Sigma}$ denotes the empirical estimate of $\Sigma$, as described in Section \ref{sec:empirical_estimation_Sigma}.
\begin{assumption}\label{ass:emp}
Suppose that $\|\Sigma\|_{\mathrm{op}}$ is bounded. Additionally, there exist sequences of non-negative reals $\{\rho_n\}_{ n \in \N}$ and $\{\ell_n\}_{n \in \N}$ such that $\rho_n =o(1)$ and $\ell_n =o(1)$ hold as $n \to \infty$, and they satisfy the following for every sufficiently large $n \in \N$: 
      \begin{align}
          \pr \left( \frac{\| \hat{\Sigma} - \Sigma \|_{\mathrm{op}}}{\|\Sigma\|_{\mathrm{op}}}  \leq  \rho_n \right) \geq 1 - \ell_n.
      \end{align}
\end{assumption}
\noindent
Assumption~\ref{ass:emp} requires that the empirical covariance matrix $\hat{\Sigma}$ used in the prior distribution accurately reflects the true covariance matrix $\Sigma$. $\rho_n$ denotes the upper bound of the estimation error of $\Sigma$, typically on the order of $O(n^{-1/2})$ when $\Sigma$ has a bounded trace~\citep{koltchinskii2017concentration,vershynin2018high}.

The following examples of $\Sigma$ satisfy Assumption \ref{ass:emp}. The first example is for sub-Gaussian random vectors.

\begin{example}(Sub-Gaussian random vectors)\label{example:subg}
Suppose $\bm{x}_i, i= 1,\ldots,n$ are sub-Gaussian random vectors with covariance matrix $\Sigma$. 
From Theorem 9 in \cite{koltchinskii2017concentration}, there exists $t_n >0$ such that, with probability at least $1-\exp(-t_n)$, it holds that for some constant $c>0$,
    ${\| \hat{\Sigma} - \Sigma \|_{\mathrm{op}}}/  {\|\Sigma\|_{\mathrm{op}}} \le  c \sqrt{({r_0(\Sigma)+t_n})/{n}}$.
In other words, Assumption~\ref{ass:emp} holds with $\rho_n = O(\sqrt{(r_0(\Sigma) + t_n) / n})$ and  $\ell_n = \exp(-t_n)$.
\end{example}

Next, we consider the log-concave random vectors, which include heavy-tailed distributions, such as the Laplace distribution \citep{bagnoli2006log}.

\begin{example}(Log-concave random vectors)\label{example:logc}
 Suppose $\bm{x}_i, i=1,\ldots,n$ are log-concave random vectors with the covariance $\Sigma$ satisfying \eqref{def:non-trace_class}. In addition, suppose that $\Sigma$ satisfies $r_0(\Sigma) = o(n)$. From Theorem 3 in \cite{zhivotovskiy2021dimension}, there are constants $c_1,c_2>0$ such that, with probability at least $1- c_1\exp\{- (n ~ r_0(\Sigma))^{1/4}\}- 2\exp(-r_0(\Sigma)) $, it holds that
    ${\| \hat{\Sigma} - \Sigma \|_{\mathrm{op}}}/  {\|\Sigma\|_{\mathrm{op}}} \le  c_2 \sqrt{{r_0(\Sigma)} / {n}}$.
In other words, Assumption \ref{ass:emp} holds with $\rho_n = O(\sqrt{r_0(\Sigma) / n})$ and  $\ell_n = O(\exp\{- (n r_0(\Sigma))^{1/4}\} + \exp(-r_0(\Sigma)))$.
\end{example}

As these examples show, Assumption~\ref{ass:emp} allows a broad class of covariate distributions. Specifically, Example~\ref{example:logc} can handle covariates that follow a log-concave distribution, and there are additional ways to analyse their tail probability (e.g., \cite{nakakita2022benign}). 

\subsection{Posterior Contraction in Advanced Case}

If Assumption~\ref{ass:emp} is satisfied, the posterior contraction holds, even without sub-Gaussianity and finite traces. Hence, the following theorem holds under several conditions with general $\rho_n$ satisfying Assumption~\ref{ass:emp}. The result also incorporates the effective variance/bias bounds $V_n$ and $B_n$ defined in \eqref{eq:V_n} and \eqref{eq:B_n}.

\begin{theorem}\label{thm:posadv}
    Consider the regression model~\eqref{eqn-model} and the posterior distribution of $\bm{\theta}$ and $\sigma^2$ with $R\le\infty$. Suppose that Assumption~\ref{ass:emp} with $\rho_n = o(\lambda_{L_{\kappa}+1})$ holds and $R$ satisfies $\|\bm{\theta}^*\|_{\Sigma} < R/2$ and $\| \bm{\theta}^* \|_2 < \infty$. For any sequence $\{\varepsilon_n\}_{n \in \N}$ satisfying that for any $\delta>0$ and $M>0$, there exists $\overline{n}\in\N$ such that for any $n\ge \overline{n}$, $\varepsilon_n<\delta$ and $n\varepsilon_n^2>M$, $n/\lambda_{L_{\kappa}} > r_{L_{\kappa}}(\Sigma) R_{L_{\kappa}}(\Sigma)$, ${\varepsilon}_n^2 > 4 \| \bm{\theta}^* \|_2^2 \sum_{i=L_{\kappa}+1}^p {\lambda}_i$, and $\max\{ V_n(\varepsilon_n), B_n\} \le \varepsilon_n^2$, then, we have the following as $n \to \infty$, for some constant $C>0$:
\begin{align}
\Pi\left(\{\bm{\theta}:\|\bm{\theta}-\bm{\theta}^*\|_{\Sigma}> C\varepsilon_n \}\mid \,\mD\right) \overset{P^*}{\longrightarrow} 0.
\end{align}
Additionally, for the posterior distribution of $\sigma^2$, we have, for some constant $C>0$:
\begin{align}    
\Pi\left(\{\sigma^2:|\sigma^2-(\sigma^{*})^2|> C \varepsilon_n\} \,\mid \,\mD\right) \overset{P^*}{\longrightarrow} 0.
\end{align}
\end{theorem}

This result demonstrates the robustness of our Bayesian approach, achieving posterior contraction under weak distributional assumptions on the covariates, unified through the covariance estimation accuracy condition.

\section{Approximation for Posterior Distribution} \label{sec:approx_posterior}

We examine an approximation theorem for the posterior distribution using a truncated Gaussian distribution. This analysis is useful when the posterior distribution does not have a closed-form expression. Specifically, we develop a variant of the Bernstein--von Mises theorem to approximate posterior distributions commonly studied in Bayesian analyses~\citep{le2012asymptotic,ghosal2017fundamentals}.

\subsection{Setup and Approximator}

In this section, we approximate the posterior distribution $\Pi( \cdot \mid \mD, \sigma^2)$ of $\bm{\theta}$ for any fixed $\sigma^2$, which is defined as
\begin{align}
    &\Pi(B \mid \mD, \sigma^2) \\
    & = \frac{
        \sum_{k=L_{\kappa}}^{U_{\kappa}} \int_{T_{\mD_1,k}^{-1}(B\cap S_{\mD_1,k})}
         \pi_{\mathfrak{b}}(\bm{\beta}_{1:k}\mid \mD_1,k) \pi_{\kappa}(k) \prod_{(y_i, \bm{x}_i) \in \mD_2}\phi(y_i;\bm{x}_i^{\top}T_{\mD_1,k}(\bm{\beta}_{1:k}),\sigma^2 )
        \mathrm{d} \bm{\beta}_{1:k}
    }{
        \sum_{k'=L_{\kappa}}^{U_{\kappa}} \int_{\R^{k'}}
         \pi_{\mathfrak{b}}(\bm{\beta}_{1:k'}' \mid \mD_1,k') \pi_{\kappa}(k') \prod_{(y_i, \bm{x}_i) \in \mD_2}\phi(y_i;\bm{x}_i^{\top}T_{\mD_1,k'}(\bm{\beta}_{1:k'}'),\sigma^2 ) 
        \mathrm{d} \bm{\beta}_{1:k'}'
    } \label{eq:definition_posterior_fixsigma}
\end{align}
for any Borel set $B \in \mB(\R^p)$. While we analyse the approximation for fixed $\sigma^2$, in practice, if $(\sigma^*)^2$ is unknown, we can estimate $(\sigma^*)^2$ separately and substitute this value \citep{tong2005estimating,wang2008effect,shen2020optimal}. 

As an approximator, we define a truncated Gaussian distribution conditional on $\mD$ and $\sigma^2$.
In preparation, we define $X_2 = (\bm{x}_{\frac{n}{2} + 1},\bm{x}_{\frac{n}{2} + 2},\ldots,\bm{x}_{n})^{\top} \in \R^{\frac{n}{2}\times p}$, $\bm{y}_2 = (y_{\frac{n}{2} + 1},y_{\frac{n}{2} + 2},\ldots,y_{n})^{\top} \in \R^{\frac{n}{2}}$, $\hat{V}_{1:L_{\kappa}} = (\hat{\bm{v}}_1,\hat{\bm{v}}_2,\ldots,\hat{\bm{v}}_{L_{\kappa}}) \in \R^{
p\times L_{\kappa}}$ and the minimum norm interpolator:
\begin{equation}
    \bar{\bm{\theta}}_2 := (X_2^{\top}X_2)^{\dagger}X_2^{\top}\bm{y}_2 
    \equiv \argmin_{\bm{\theta}\in \R^p} \left\{\|\bm{\theta}\|_2 ; \sum_{(y_i, \bm{x}_i) \in \mD_2} (y_i - \bm{x}_i^\top \bm{\theta})^2 = 0 \right\},
\end{equation}
which is the effective non-Bayesian estimator in the over-parameterized linear regression \citep{bartlett2020benign,tsigler2020benign}. Further, we introduce a projected version of the interpolator $\bar{\bm{\theta}}_2$ and a corresponding covariance matrix onto the (prior) principal subspace spanned by $\hat{V}_{1:L_{\kappa}}$. Given $\sigma^2$, we define the projected empirical covariance matrix $ \bar{\Sigma}_{L_\kappa} := \hat{V}_{1:L_{\kappa}}^{\top}X_2^{\top}X_2\hat{V}_{1:L_{\kappa}}$ and its regularized version $\Lambda_{\sigma^2} := \tilde{\Sigma}_{1:L_{\kappa}}^{-1} +  ({2\sigma^2 })^{-1} \bar{\Sigma}_{L_\kappa} \in \R^{L_{\kappa} \times L_{\kappa}}$.
Also, we define the projected version of  $\bar{\bm{\theta}}_2$ as 
$\bm{m}_{\sigma^2} := \{ I - (({2\sigma^2 })^{-1}\bar{\Sigma}_{L_\kappa})^{-1} \tilde{\Sigma}_{1:L_{\kappa}}^{-1}\} \bar{\Sigma}_{L_\kappa}^{-1}\hat{V}_{1:L_{\kappa}}^{\top}X_2^{\top}X_2\bar{\bm{\theta}}_2$. This vector is scaled by a shrinkage factor, which vanishes as the sample size increases. Then, we define the $p$-dimensional approximator distribution $\Pi^{\infty}(\cdot\mid \mD,\sigma^2)$ as, for any Borel set $B \in \mB(\R^p)$,
\begin{align}
    &\Pi^{\infty}_{\vartheta}(B\mid \mD,\sigma^2)\\
    &:= \frac{\int_{T_{\mD_1,L_\kappa}^{-1}(B\cap S_{\mD_1,L_\kappa})} \exp\left\{-(\bm{\beta}_{1:L_{\kappa}} - \bm{m}_{\sigma^2})^\top \Lambda_{\sigma^2} (\bm{\beta}_{1:L_{\kappa}} - \bm{m}_{\sigma^2})\right\} \mone\left\{    \|\bm{\beta}_{1:L_{\kappa}}\|_{\tilde{\Sigma}_{1:L_{\kappa}}}\le R 
  \right\}d\bm{\beta}_{1:L_{\kappa}}}{\int_{\R^{L_{\kappa}}} \exp\left\{-(\bm{\beta}_{1:L_{\kappa}}' - \bm{m}_{\sigma^2})^\top \Lambda_{\sigma^2} (\bm{\beta}_{1:L_{\kappa}}' - \bm{m}_{\sigma^2})\right\}\mone\left\{   \|\bm{\beta}_{1:L_{\kappa}}'\|_{\tilde{\Sigma}_{1:L_{\kappa}}}\le R 
 \right\}d\bm{\beta}_{1:L_{\kappa}}'}.\label{def:approximator_density}
\end{align}

\subsection{Distribution Approximation Theorem}
We develop the distribution approximation theorem for the posterior distribution \eqref{eq:definition_posterior_fixsigma}, as a variant of the Bernstein--von Mises theorem.
Its proof is provided in the appendix.
\begin{assumption}\label{ass:emp2}
Given a covariance matrix $\Sigma$ with its eigenvalues $\{\lambda_k\}_{k=1}^p$, the sequence $\{\rho_n\}_{n \in \N}$ in Assumption~\ref{ass:emp} and the eigenvalues $\{{\lambda}_k\}$ satisfy $\lambda_{L_{\kappa}}^{-1}=o(n^{1/5})$ and $ \rho_n = o\left({\lambda}_{L_{\kappa}} ({\lambda}_{L_{\kappa}}-{\lambda}_{L_{\kappa}+1}) \right)$ as $n \to \infty$.
Also, the sequence $\{\ell_n\}_{n \in \N}$ in Assumption~\ref{ass:emp} satisfies that $\pr(\|\bar{\bm{\theta}}_2\|_2 \leq C) \geq 1 - \ell_n$ with some finite $C > 0$. 
\end{assumption}
These assumptions serve several purposes. The first condition prevents the $L_{\kappa}$-th eigenvalue from becoming excessively small, ensuring that the effective signal in the principal subspace remains sufficiently strong. The second condition requires that the covariance estimation error is dominated by the signal strength, which is mild given that $\rho_n$ typically decays at nearly an $n^{-1/2}$ rate.

\begin{theorem} \label{thm:disapp}
Consider the posterior distribution \eqref{eq:definition_posterior_fixsigma} corresponding to the regression model \eqref{eqn-model} with $R<\infty$ and an arbitrarily fixed $\sigma^2 > 0$. Suppose that Assumptions~\ref{ass:emp} and \ref{ass:emp2} hold. Then, if we take a decreasing function $f$ such that $f(L_{\kappa})-f(k)= \omega(\log U_{\kappa})$ for $k= L_{\kappa}+1,\ldots,U_{\kappa}$,
the distribution $\Pi^{\infty}( \cdot \mid \mD, \sigma^2)$ in \eqref{def:approximator_density} satisfies the following as $n \to \infty$:
\begin{align}
    \left\|\Pi(\cdot\mid \mD,\sigma^2) - \Pi^{\infty}(\cdot\mid \mD,\sigma^2)\right\|_{\mathrm{TV}} \xrightarrow[]{P^{*}} 0 . 
\end{align}
Furthermore, if $\sum_{n=1}^{\infty} \ell_n < \infty$ holds, then we have almost sure convergence.
\end{theorem}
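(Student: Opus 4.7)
The plan is to exploit the mixture structure of the posterior~\eqref{eq:definition_posterior_fixsigma} over the discrete index $k$ and to show that almost all posterior mass concentrates on the single slice $k=L_{\kappa}$, which itself coincides with the approximator $\Pi^{\infty}(\cdot\mid \mD_2,\sigma^2)$.

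First, I would rewrite the posterior as the finite mixture
\begin{align}
    \Pi(\cdot\mid\mD,\sigma^2) = \sum_{k=L_{\kappa}}^{U_{\kappa}} w_k\, \Pi_k^{\mathrm{cond}}(\cdot\mid \mD,\sigma^2), \qquad w_k = \frac{\pi_{\kappa}(k)\,m_k}{\sum_{k'}\pi_{\kappa}(k')\,m_{k'}},
\end{align}
where $m_k$ is the marginal likelihood at level $k$ and $\Pi_k^{\mathrm{cond}}$ is the conditional posterior of $\bm{\theta}$ given $k$. Because the log-integrand $-\bm{\theta}^{\top}\hat{\Sigma}_{1:k}^{\dagger}\bm{\theta}-\sum_{(y_i,\bm{x}_i)\in\mD_2}(y_i-\bm{x}_i^{\top}\bm{\theta})^2/(2\sigma^2)$ is quadratic in $\bm{\theta}$, completing the square shows that $\Pi_k^{\mathrm{cond}}$ is a truncated Gaussian on $\{\|\bm{\theta}\|_2\leq R\}$ with precision $\Lambda_k:=\hat{\Sigma}_{1:k}^{\dagger}+X_2^{\top}X_2/(2\sigma^2)$ and mean $\Lambda_k^{\dagger}X_2^{\top}\bm{y}_2/(2\sigma^2)$. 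For $k=L_{\kappa}$, one has $\Lambda_{L_{\kappa}}=\Lambda_{\sigma^2}$ by definition, while the identity $X_2^{\top}\bm{y}_2=X_2^{\top}X_2\,\bar{\bm{\theta}}_2$ (which holds because $X_2^{\top}\bm{y}_2\in\mathrm{range}(X_2^{\top})\subset\mathrm{range}(X_2^{\top}X_2)$) reduces the mean to exactly $\bm{\mu}_{\sigma^2}$. Hence $\Pi_{L_{\kappa}}^{\mathrm{cond}}(\cdot\mid\mD,\sigma^2)=\Pi^{\infty}(\cdot\mid \mD_2,\sigma^2)$ as measures on $\R^p$.

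Second, I would show $w_{L_{\kappa}}\to 1$ almost surely. The weight ratio factorises as
\begin{align}
    \frac{w_k}{w_{L_{\kappa}}} = \exp\{f(k)-f(L_{\kappa})\}\,\frac{m_k}{m_{L_{\kappa}}}, \quad k\in[L_{\kappa}+1:U_{\kappa}].
\end{align}
A Laplace-type evaluation of each $m_k$, performed on the range of $\hat{\Sigma}_{1:k}$ after orthogonally decomposing $\bm{\theta}$ into its projection onto this range and its perpendicular part (the latter contributing a uniform measure on a $(p-k)$-ball of radius $R$, inactive on the Gaussian bulk since $\|\bar{\bm{\theta}}_2\|_2$ is bounded), expresses $m_k$ as a product of eigenvalue and volume factors times $\exp\{\bm{\mu}_k^{\top}\Lambda_k\bm{\mu}_k/2\}$. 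Combining the operator-norm control in Assumption~\ref{ass:emp2} on $(\hat{\Sigma}_{1:k}^{\dagger}/n+X_2^{\top}X_2/(2n\sigma^2))^{\dagger}X_2^{\top}X_2/n$ with the eigenvalue bounds of Assumptions~\ref{ass:emp} and~\ref{ass:emp2} and $U_{\kappa}\lessapprox n\varepsilon_n^2$ from~\ref{ass:comb2}, the ratio $m_k/m_{L_{\kappa}}$ is controlled by $\exp(O(n))$ on the high-probability event of Assumption~\ref{ass:emp}. The hypothesis $|f(k)-f(L_{\kappa})|=\omega(n)$ then forces $w_k/w_{L_{\kappa}}\to 0$; since the mixture has only $U_{\kappa}-L_{\kappa}=O(n)$ components, a union bound gives $w_{L_{\kappa}}\to 1$ and
\begin{align}
    \|\Pi(\cdot\mid\mD,\sigma^2)-\Pi^{\infty}(\cdot\mid\mD_2,\sigma^2)\|_{\mathrm{TV}}\leq 2(1-w_{L_{\kappa}})\to 0.
\end{align}

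The main technical obstacle is the bound $m_k/m_{L_{\kappa}}=\exp(O(n))$. The singular prior covariance forces the Laplace approximation to live on a $k$-dependent subspace of $\R^p$, and the uniform-on-ball contribution from the orthogonal complement induced by the indicator $\mone\{\|\bm{\theta}\|_2\leq R\}$ must be tracked carefully when comparing different $k$, so that the resulting $\exp(O(n))$ factor is dominated by the $\exp(-\omega(n))$ decay built into $\pi_{\kappa}$. Handling this interplay between the $k$-dependent truncated Gaussian geometry and the growing dimension of the invariant subspace is the crux of the argument.
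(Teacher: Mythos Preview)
Your approach is correct and the identification $\Pi_{L_\kappa}^{\mathrm{cond}}=\Pi^{\infty}$ is spot on, but the paper takes a different route that sidesteps the obstacle you flag. Rather than integrating to obtain marginal likelihoods $m_k$, the paper works pointwise: writing both densities as $g/\!\int g$ and $g^\infty/\!\int g^\infty$ with $g,g^\infty$ the unnormalized sums over $k$, it bounds the total variation by $\sup_{\|\bm{\theta}\|_2\le R}|1-g^\infty(\bm{\theta})/g(\bm{\theta})|$, divides both $g$ and $g^\infty$ by the single $k=L_\kappa$ summand, and reduces the whole argument to a \emph{pointwise} $O(n)$ bound on the difference of the two quadratic forms (precision $T_n=\hat{\Sigma}_{1:k}^\dagger+X_2^\top X_2/2\sigma^2$ versus $\Lambda_{\sigma^2}$), obtained by direct operator-norm algebra from Assumption~\ref{ass:emp2} and the compact support. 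No integration, no Laplace expansion, no subspace decomposition appear. That said, the obstacle you flag in your own route is milder than you suggest: since $\pi_\vartheta(\cdot\mid\hat\Sigma,k)$ is already a \emph{normalized} density on $\{\|\bm{\theta}\|_2\le R\}$ for every $k$, the $(p-k)$-dimensional volume factors you worry about cancel automatically in $m_k$, and because the likelihood ratio relative to its value at the interpolator $\bar{\bm{\theta}}_2$ lies in $[e^{-O(n)},1]$ uniformly on this ball (via $\|X_2^\top X_2/n\|_{\mathrm{op}}<\infty$), one gets $m_k/m_{L_\kappa}=e^{O(n)}$ immediately, with no Laplace analysis. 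So your mixture-weight argument is in fact no harder than the paper's, and arguably cleaner probabilistically; the paper's pointwise route simply trades the integration for a quadratic-form computation. One minor point: you cite $U_\kappa\lessapprox n\varepsilon_n^2$ from Assumption~\ref{ass:comb2}, which is not among the hypotheses of this theorem.
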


From Theorem~\ref{thm:disapp}, we can easily approximate the posterior $\Pi(\cdot\mid \mD,\sigma^2)$ without laborious Bayesian computation, thereby simplifying the process of acquiring credible intervals and other Bayesian inferences. We can obtain a similar result on $\|\Pi(\cdot\mid \mD) - \Pi^{\infty}(\cdot\mid \mD)\|_{\mathrm{TV}}$ by marginalizing $\sigma^2$ with respect to $\pi_{\varsigma^2}$.

The condition for almost sure convergence, $\sum_{n=1}^{\infty} \ell_n < \infty$, is satisfied in the case where the covariates are sub-Gaussian by choosing $t_n$ in Example~\ref{example:subg} to be $\omega(\log n)$. In the log-concave case, it is satisfied for data where $r_0(\Sigma)$ in Example~\ref{example:logc} is $\omega(\log n)$.

\section{Proof Overview and Discussion}  \label{sec:proof-outline}

This section presents theoretical details of our results and connects them to related work.

\subsection{Generalized Statement}
We present a generalized version of the statement of Theorems~\ref{thm:poscon}~and~\ref{thm:posadv} for the posterior contraction, which is useful for outlining the proof of our theoretical analysis. Its difference from Theorems~\ref{thm:poscon}~and~\ref{thm:posadv} is that it generalizes the eigenvalue decay condition by introducing a new parameter $\lambda \geq 0$, which can depend on $n$.

In preparation for presenting the generalized statement, we define additional notation and an extended version of the variance/bias bounds $V_n(\varepsilon)$ and $B_n$ defined in Section \ref{sec:posterior_contraction}.
Let $\lambda \geq 0$ be a (virtual) regularization factor.
We first define $\bm{\beta}^* := (\hat{\bm{v}}_1,\ldots,\hat{\bm{v}}_p)^{\top} \bm{\theta}^*$,  $\bar{\mI} := \{ i=1,\ldots,U_{\kappa} : n  ({\lambda_i \sum_{i>L_{\kappa}} \lambda_{i}^2})^{1/2} > (\sum_{i>L_{\kappa}} \lambda_{i})^{1/2} (\lambda + \sum_{i>L_{\kappa}} \lambda_{i}) \}$, and $\tilde{\varepsilon}^2 := {\varepsilon}^2 - \sum_{i=L_{\kappa}+1}^p {\lambda}_i \beta_i^{*2} - 2\lambda_1\rho_n \sum_{i=L_{\kappa}+1}^p \beta_i^{*2}$ with $\varepsilon > 0$.
Also, we define the extended bounds as 
\begin{align}
    \bar{V}_n(\varepsilon) &= \sum_{i\in \bar{\mI}} \frac1n \log \left( \frac{n\sqrt{\lambda_i} \sqrt{\sum_{i>L_{\kappa}} \lambda_{i}^2}}{ (\lambda + \sum_{i>L_{\kappa}} \lambda_{i}) \sqrt{\sum_{i>L_{\kappa}} \lambda_{i}} } \right) + \frac{U_{\kappa}+\log( n/2\varepsilon)}{n} +\frac{n \sum_{i>L_{\kappa}} \lambda_{i}^2 }{(\lambda + \sum_{i>L_{\kappa}} \lambda_{i})^2}, 
    \label{eq:V_n:extension} \\
    \bar{B}_n(\varepsilon) &= 
        \frac{1}{n}\log\frac{1}{\varepsilon\pi_{\kappa}(L_{\kappa})} + \frac1n\sum_{i=1}^{L_{\kappa}} \frac{\beta_i^{*2}}{{\lambda}_i} + \frac{L_{\kappa}}{n} \log \left(\frac{{ 2 L_{\kappa} \lambda_1}}{\tilde{\varepsilon}}\right),\label{eq:B_n:extension}
\end{align}
for $\varepsilon > 0$. 
The largest difference from $(V_n(\varepsilon), B_n)$ in Section \ref{sec:posterior_contraction} is the presence of the regularization factor $\lambda$, which allows for error evaluation in generalized situations.

\begin{theorem}\label{thm:posext}
    Consider the regression model~\eqref{eqn-model} and the posterior distribution of $\bm{\theta}$ and $\sigma^2$, with $R\le\infty$. Suppose that Assumption~\ref{ass:emp} with $\rho_n = o(\lambda_{L_{\kappa}+1})$ holds and $R$ satisfies $\| \bm{\theta}^* \|_{\Sigma} <R/2$ and $\| \bm{\theta}^* \|_2 < \infty$. 
    For a sequence $\{\varepsilon_n\}_{n \in \N}$ satisfying that for any $\delta>0$ and $M>0$, there exists $\overline{n}\in\N$ such that for any $n\ge \overline{n}$, $\varepsilon_n<\delta$ and $n\varepsilon_n^2> M$,  $n \sum_{i > L_{\kappa}} \lambda_{i}^2 / \sum_{i > L_{\kappa}} \lambda_{i} \left( \lambda + \sum_{i > L_{\kappa}} \lambda_{i} \right)^2  \ge  \mone\left\{ n^2 \varepsilon_n^2 \lambda_{L_{\kappa}} \sum_{i>L_{\kappa}} \lambda_{i}^2  \le R^2 \sum_{i>L_{\kappa}} \lambda_{i}  (\lambda + \sum_{i>L_{\kappa}} \lambda_{i})^2\right\}$ for some $\lambda \geq 0$, $\tilde{\varepsilon}_n^2> 0$, and
        $\max\{ \bar{V}_n(\varepsilon_n), \bar{B}_n(\varepsilon_n)\} \le \varepsilon_n^2$,
    then, we have the following as $n \to \infty$, for some constant $C>0$:
\begin{align}
\Pi\Big(\{\bm{\theta}:\|\bm{\theta}-\bm{\theta}^*\|_{\Sigma}> C\varepsilon_n \}\mid \,\mD\Big) \overset{P^*}{\longrightarrow} 0.
\end{align}
Additionally, for the posterior distribution of $\sigma^2$, we have, for some constant $C>0$:
\begin{align}    
\Pi\Big(\{\sigma^2:|\sigma^2-(\sigma^{*})^2|> C \varepsilon_n\} \,\mid \,\mD\Big) \overset{P^*}{\longrightarrow} 0.
\end{align}
\end{theorem}

We elaborate on the supplementary conditions pertinent to the aforementioned theorem. The condition $\rho_n = o(\lambda_{L_{\kappa}+1})$ imposes a restriction on the value of $L_{\kappa}$, although this constraint is reasonable since the left-hand side decays at a fast rate close to $n^{-1/2}$. $\| \bm{\theta}^* \|_2 < \infty$ is standard in scenarios where the dimension $p$ diverges to infinity, as in~\cite{bartlett2020benign,tsigler2020benign}. 
The condition $n \sum_{i > L_{\kappa}} \lambda_{i}^2 / \sum_{i > L_{\kappa}} \lambda_{i} \left( \lambda + \sum_{i > L_{\kappa}} \lambda_{i} \right)^2  \ge  \mone\left\{ n^2 \varepsilon_n^2 \lambda_{L_{\kappa}} \sum_{i>L_{\kappa}} \lambda_{i}^2  \le R^2 \sum_{i>L_{\kappa}} \lambda_{i}  (\lambda + \sum_{i>L_{\kappa}} \lambda_{i})^2\right\}$ is technically intricate and is addressed in Proposition~\ref{thm:bound2}.
Concerning the condition ${\varepsilon}_n^2 - \sum_{i=L_{\kappa}+1}^p {\lambda}_i \beta_i^{*2} - 2\lambda_1\rho_n \sum_{i=L_{\kappa}+1}^p \beta_i^{*2}> 0$, its sufficient conditions are
\begin{equation} \label{eq:condition_ourBias}
    {\varepsilon}_n^2 > 4\lambda_1 \rho_n \sum_{i=L_{\kappa}+1}^p \beta_i^{*2} \quad \text{and} \quad {\varepsilon}_n^2 > 2\sum_{i=L_{\kappa}+1}^p {\lambda}_i \beta_i^{*2}.
\end{equation}
The former condition is mild since $\rho_n$ typically decays at an order approximately equal to $n^{-1/2}$, and other factors including $\sum_{i>L_{\kappa}} \beta_i^{*2}$ are bounded by a constant. The latter condition indicates that regions with smaller spectral values contribute negligibly. The relationship between this condition and the prior work is discussed in Section~\ref{subsec: connect}.

We provide two examples for the prior setting, $\bar{B}_n=  \bar{B}_n(\varepsilon_n)$, and $\bar{V}_n = \bar{V}_n(\varepsilon_n)$ with a suitably selected $\varepsilon_n$ under the conditions of Theorem~\ref{thm:posext}: one illustrating a trace-class example and another that does not satisfy the trace-class condition.
\begin{example}\label{example: polynomial_decay1}
    Consider the setup with the dimension $p=\omega(n)$ and eigenvalues $\{\lambda_k\}_{k=1}^{p}$ of the form
    $\lambda_k = k^{-\alpha}$ with $\alpha > 1$. Then, if $L_{\kappa} = n^{{1}/  ({2\alpha+2})}$, $U_{\kappa} = n^{{1}/  {(\alpha+1)}}$, $\lambda = n^{({2\alpha+3})/({4\alpha+4})}$, and $\pi_{\kappa}(L_{\kappa}) > c$ with some positive constant $c$, the following evaluation holds:
    \begin{align*}
        \bar{V}_n = O\left(n^{-{\alpha} / ({\alpha+1})}\right)   \quad \mathrm{and} \quad  \bar{B}_n  = O\left(n^{-1/2}\right),
    \end{align*}
    and if $\|\bm{\beta}_{1:L_{\kappa}}^*\|_2 = o(1)$, $\bar{B}_n = O\left(n^{-(\alpha+2)/({2\alpha+2})}\right)$ holds. 
\end{example}

\begin{example}\label{example: polynomial_decay2}
    Consider the setup with eigenvalues $\{\lambda_k\}_{k=1}^{p}$ of the form
    $\lambda_k = k^{-\alpha}$ with $0< \alpha \le 1$.  Then, if $L_{\kappa} = n^{{1} / {(2\alpha+2)}}$, $U_{\kappa} = n^{{1} / {(\alpha+1)}}$, $\lambda = n^{({2\alpha+3})/{(4\alpha+4)}}$, and $\pi_{\kappa}(L_{\kappa}) > c$ with any positive constant $c$, the following relation holds:
    \begin{align*}
        \bar{V}_n = O\left(n^{-{\alpha} / ({\alpha+1})}\right) \quad \mathrm{and} \quad  \bar{B}_n = O\left(n^{-{\alpha} / ({\alpha+1})}\right),
    \end{align*}
    where $p = \omega(n)$ if $\alpha=1$, $p> n^{({2\alpha+3})/{(4-4\alpha^2)}}$ if ${1} / {2} < \alpha < 1$, $p > n^{{4}/  {3}}$ if $\alpha = {1} / {2}$, and $p>n^{({2\alpha+1})/({\alpha+1})}$ if $0<\alpha < {1} / {2}$.
\end{example}

In Example~\ref{example: polynomial_decay1}, the eigenvalues follow a polynomial decay governed by the parameter $\alpha>1$, where $\Sigma$ is trace class. The evaluations show that as $ \alpha$ increases, indicating the simplicity of the data structure, the convergence rate of $\bar{V}_n$ improves.
Conversely, Example~\ref{example: polynomial_decay2} explores situations in which $\Sigma$ lacks a finite trace. These conditions on $p$ are imposed to establish the order $O\left( n^{- {\alpha}/  ({\alpha + 1})} \right)$, and the convergences remain valid even if these specific conditions on $p$ are not met. When simply contrasted with Example~\ref{example: polynomial_decay1}, the present results exhibit slower convergence. This suggests that the increased complexity in data generation demands a larger dataset for reliable predictions.

\subsection{Proof Overview}
We provide overviews of the proofs of Theorem~\ref{thm:posext}. The same discussion holds for Theorems~\ref{thm:poscon}~and~\ref{thm:posadv}.

To begin with, we describe how we apply the posterior contraction theory.
In preparation, we define a parameter space $\mP = \R^p \times \R_+$ and introduce its distance $d_{\mP}$ induced by the norms $\|\cdot\|_\Sigma$ for $\R^p$ and $|\cdot|$ for $\R_+$.
We also define a subset $\mP_{n,\varepsilon, \lambda}:=\mP_{n,\varepsilon,\lambda}^1 \times \mP_{n}^2 \subset \mP$, where
$\mP_{n,\varepsilon,\lambda}^1:=\left\{\bm{h}\in\mathbb{R}^p  : \|\bm{h}\|_2 \leq H_{n,\varepsilon,\lambda} \right\}$ and $\mP_{n}^2:=\{a\in\mathbb{R}_+ : n^{-1}\le a\le n\},$ where $H_{n,\varepsilon,\lambda}^2 = n^2 \varepsilon^2 \sum_{i>L_{\kappa}} \lambda_{i}^2 /\sum_{i>L_{\kappa}} \lambda_{i}  (\lambda + \sum_{i>L_{\kappa}} \lambda_{i})^2$. Let $P_{\bm{\theta}, \sigma^2}$ be a joint distribution of $(\bm{x},y)$ with the linear regression model~\eqref{eqn-model} with $(\bm{\theta}, \sigma^2)$, and recall that $P^* (= P_{\bm{\theta^*}, (\sigma^*)^2})$ is the joint distribution with the true parameter $(\bm{\theta}^*, (\sigma^*)^2)$.
We define KL-type neighbourhood as $\mB_{KL}(\varepsilon) :=\left\{(\bm{\theta},\sigma^2): \KL(P^*, P_{\bm{\theta}, \sigma^2}) \leq \varepsilon^2, \KV(P^*,P_{\bm{\theta}, \sigma^2}) \leq \varepsilon^2 \right\}$.
Then, from Theorem 2.1 of \cite{ghosal2000convergence}, to achieve the statements of Theorem~\ref{thm:posext}, it is sufficient to prove that there exists $\varepsilon_n$ such that the following three conditions hold with probability at least $1-\ell_n$ for any sufficiently large $n$, with some constant $C>0$,
\begin{align}
    & \frac{1}{n} \log \mN(\varepsilon_n, \mP_{n,\varepsilon_n, \lambda}, d_{\mP} )\leq \varepsilon_n^2 , \\
    & -\frac{1}{n}\log \Pi_{\vartheta,\varsigma^2} (\mB_{KL}(\varepsilon_n)\mid \mD_1) \leq C \varepsilon_n^2 , \\
    & \frac{1}{n}\log \Pi_{\vartheta,\varsigma^2} (\mP\setminus \mP_{n,\varepsilon_n, \lambda} \mid \mD_1) \leq -(C+4) \varepsilon_n^2 .
\end{align}
In the following, we discuss these three conditions separately.

\begin{proposition} \label{thm:bound1}
    For any $\varepsilon>0$ and some $\lambda \ge 0$, we have the following for any sufficiently large $n \in \N$:
    \begin{align}\label{ggv-thm2.1-1}
       \frac1n\log \mN(\varepsilon, \mP_{n,\varepsilon, \lambda}, d_{\mP} ) \lesssim \bar{V}_n(\varepsilon).
    \end{align}
\end{proposition}

\noindent\textbf{Proof Outline of Proposition~\ref{thm:bound1}.} 
To effectively bound the covering numbers associated with the high-dimensional parameter space, we begin by decomposing the entropy into more manageable components: a ``spike'' part $\mN(\varepsilon,\mP_{n,\varepsilon,\lambda}^1,\|\cdot\|_{\Sigma_{1:k}})$ and a ``noise'' part $\mN(\varepsilon,\mP_{n,\varepsilon,\lambda}^1,\|\cdot\|_{\Sigma_{k+1:p}})$.
The noise part requires analysis in high-dimensional spaces characterized by small eigenvalues, which makes bounding its entropy particularly challenging. To overcome this difficulty, we employ the Sudakov minoration~\citep{wainwright2019high,gine2021mathematical} to transform the problem into one involving dual norms. This approach allows us to exploit dimension-free bounds that remain unaffected by the high dimensionality.
The spike part, on the other hand, is of moderate dimensionality (not low-dimensional) and its eigenvalues are not small. Then, we apply a tight bound for the covering numbers of ellipsoids in terms of Euclidean balls, as established by \cite{DUMER20061667}.
\begin{proposition}\label{thm:bound3}
    Suppose that Assumption~\ref{ass:emp} with $\rho_n = o(\lambda_{L_{\kappa}+1})$, $\| \bm{\theta}^* \|_{\Sigma} < R/2$ and $\| \bm{\theta}^* \|_2 < \infty$ hold.
    For a sequence $\{\varepsilon_n\}_{n \in \N}$ such that $\varepsilon_n\to 0$, and ${\varepsilon}_n^2 > \sum_{i=L_{\kappa}+1}^p ({\lambda}_i + 2\lambda_1\rho_n)\beta_i^{*2}$ where $\bm{\beta}^* := (\hat{\bm{v}}_1,\ldots,\hat{\bm{v}}_p)^{\top} \bm{\theta}^*$,
    we have the following with sufficiently large $n \in \N$, with probability at least $1-\ell_n$:
    \begin{align}
    -\frac{1}{n}\log \Pi_{\vartheta,\varsigma^2} (\mB_{KL}(\varepsilon_n)\mid \mD_1)\lesssim
    \bar{B}_n(\varepsilon_n) \label{ggv-thm2.1-3}. ~ 
    \end{align}
\end{proposition}

\noindent\textbf{Proof Outline of Proposition~\ref{thm:bound3}.}
We evaluate the KL-type neighbourhood on the prior measure. It is bounded from below by $\Pi_{\vartheta,\varsigma^2} \left(\mB_2\mid \mB_1, \mD_1 \right)\Pi_{\varsigma^2} \left(\mB_1\right),$ where $\mB_1 \subset \R_+$ and $\mB_2 \subset \R^p \times \R_+ $ are defined as follows:
  \begin{align}
	\mB_1& :=
		\Big\{ \sigma^2: \frac{(\sigma^*)^2}{\sigma^2}-\log\Big(\frac{(\sigma^*)^2}{\sigma^2}\Big)-1\leq \varepsilon_n^2,\quad\frac{(\sigma^*)^4}{\sigma^4}-\frac{2(\sigma^*)^2}{\sigma^2}+1\leq \varepsilon_n^2\Big\},  \label{def:B1}\\
	\mB_2 & := 
		\Big\{ (\bm{\theta}, \sigma^2):
		\frac12\frac{\|\bm{\theta} - \bm{\theta}^*\|^2_{{\Sigma}} }{\sigma^2} \leq \varepsilon_n^2, \quad \frac{(\sigma^*)^2}{\sigma^2}\frac{\|\bm{\theta} - \bm{\theta}^*\|^2_{{\Sigma}}}{\sigma^2}  \leq \frac{\varepsilon_n^2}{2} \Big\}. \label{def:B2}
  \end{align}
Both $\mB_1$ and $\mB_2$ are defined as intersections of inverse images of closed sets under continuous functions and are thus measurable.
The term $\Pi_{\varsigma^2} \left(\mB_1\right)$ can be bounded by the tail property of the inverse-Gaussian distribution and integral calculations. To evaluate $\mB_2$ using the prior distribution, we leverage the lower dimensionality of the prior by performing rotation and truncation. Specifically, we consider the transformations \(\bm{\beta} = \hat{V}^{\top} \bm{\theta}\) and \(\bm{\beta}^* = \hat{V}^{\top} \bm{\theta}^*\), where \(\hat{V} = (\hat{\bm{v}}_1, \ldots, \hat{\bm{v}}_p) \in \R^{p \times p}\) is an orthogonal matrix composed of the estimated eigenvectors. The critical set for evaluating \(\mB_2\) in \eqref{def:B2} becomes $\left\{ \bm{\beta}\in \R^p : \left\| \bm{\beta} - \bm{\beta}^* \right\|_{\mathrm{diag}(\tilde{\lambda}_1, \ldots, \tilde{\lambda}_p)} \leq \varepsilon_n \right\}$, where \(\tilde{\lambda}_i = \hat{\lambda}_i + \lambda_1 \rho_n < \lambda_i + 2\lambda_1 \rho_n \) with high probability by Lemma~\ref{lem:eigen-error} in the appendix. Given the condition \(\varepsilon_n^2 > \sum_{i > L_{\kappa}} (\lambda_i + 2\lambda_1 \rho_n) \beta_i^{*2}\), the \(\varepsilon_n\)-ball centred at \(\bm{\beta}^*\) in the metric space induced by the norm \(\| \cdot \|_{\mathrm{diag}(\tilde{\lambda}_1, \ldots, \tilde{\lambda}_p)}\) has positive mass within the support \(S_{\mD_1,k}\) (Figure~\ref{fig:assumption_bound}). Finally, we evaluate the probability, eliminate the influence of eigenvalue estimation errors using Lemma~\ref{lem:eigen-error} and establish the main statement.

\begin{figure}[t]
  \begin{center}
  \includegraphics[width=0.75\textwidth]{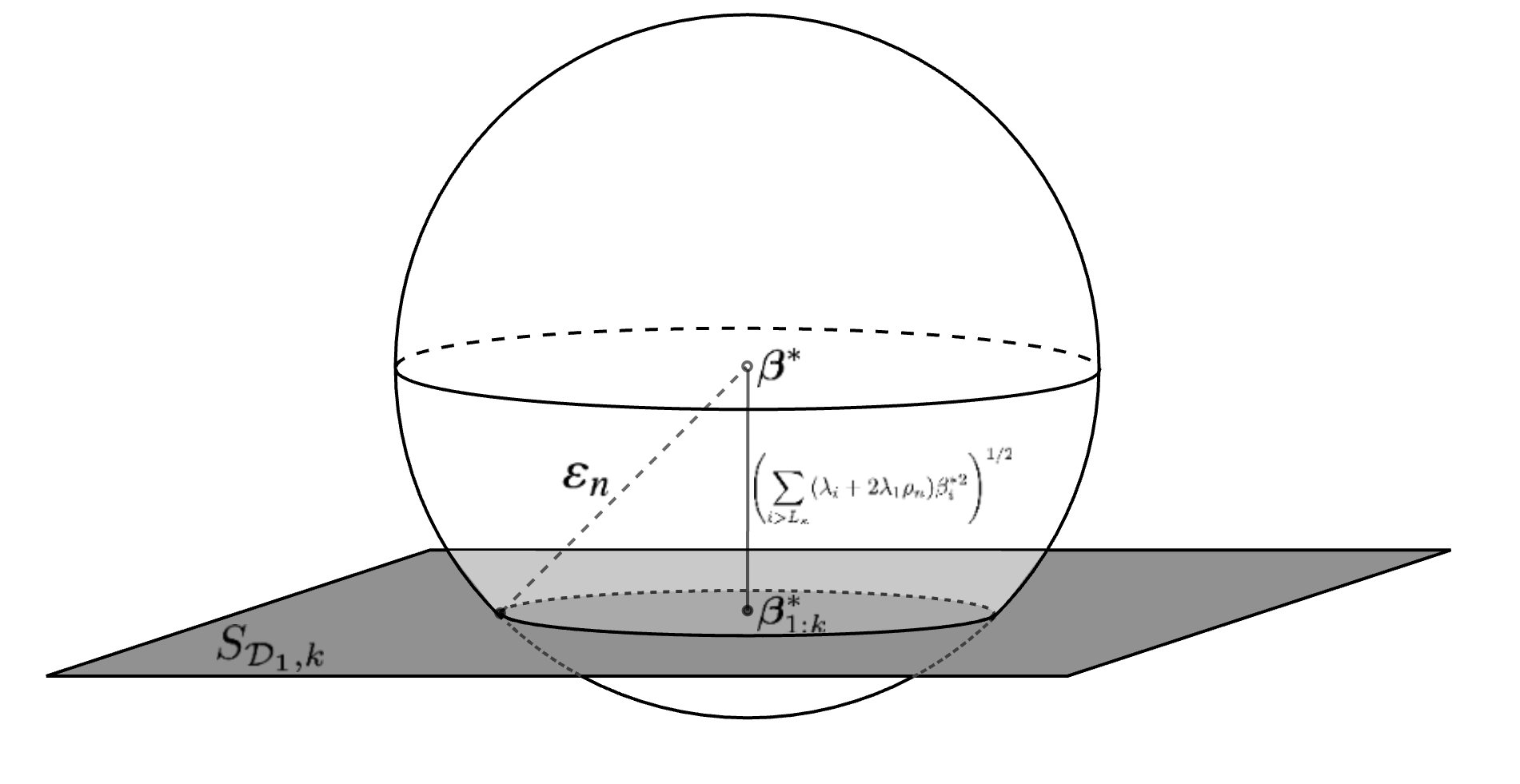}
  \end{center}
  \caption{An illustration of the support space $S_{\mD_1,k}$ of the prior distribution and an ellipse centred at $\bm{\beta}^*$, in the $p$-dimensional space $\mP_n^1$ equipped with the metric induced by the norm $\|\cdot\|_{\mathrm{diag}(\tilde{\lambda}_1,\ldots,\tilde{\lambda}_p)}$. \label{fig:assumption_bound}}
\end{figure}

\begin{proposition}\label{thm:bound2}    
    If Assumption~\ref{ass:emp} with $\rho_n = o(\lambda_{L_{\kappa}+1})$ holds, then, for any $\varepsilon>0$ and some $\lambda \ge 0$, we have the following for any sufficiently large $n \in \N$, with probability at least $1-\ell_n$:
    \begin{align}\label{ggv-thm2.1-2}
    \frac{1}{n} \log \Pi_{\vartheta,\varsigma^2}(\mP \setminus \mP_{n,\varepsilon, \lambda} \mid \mD_1) \lesssim
    \begin{cases} 
        \frac{\log \pi_{\kappa}(L_{\kappa})}{n} + \frac{ U_{\kappa}}{n} - \frac{H_{n,\varepsilon, \lambda}^2}{n},\quad  \text{if } \frac{n^2 \varepsilon^2 \lambda_{L_{\kappa}} \sum_{i>L_{\kappa}} \lambda_{i}^2 }{ \sum_{i>L_{\kappa}} \lambda_{i}  (\lambda + \sum_{i>L_{\kappa}} \lambda_{i})^2} \le R^2, \\
        - C_{\eta,\xi} , \quad \text{otherwise}, 
    \end{cases}
    \end{align}
    where $C_{\eta,\xi}>0$ is a constant depending upon the inverse-Gaussian parameters $\eta$ and $\xi$.
\end{proposition}

The condition $\eqref{ggv-thm2.1-2} < -\varepsilon^2$ is necessary for posterior contraction, and can be equivalently expressed as
    $({n \sum_{i > L_{\kappa}} \lambda_{i}^2 })/({ \sum_{i > L_{\kappa}} \lambda_{i} \left( \lambda + \sum_{i > L_{\kappa}} \lambda_{i} \right)^2 }) \geq  \mone\left\{ n^2 \varepsilon^2 \lambda_{L_{\kappa}} \sum_{i>L_{\kappa}} \lambda_{i}^2 \leq R^2 \sum_{i>L_{\kappa}} \lambda_{i}  \left(\lambda + \sum_{i>L_{\kappa}} \lambda_{i}\right)^2\right\}$.
This condition is violated when the value obtained by dividing the radius of $\mathcal{P}_{n,\varepsilon,\lambda}^1$ by $n$ becomes sufficiently small.

\noindent\textbf{Proof Outline of Proposition~\ref{thm:bound2}}.
In the proof, we study the discrepancy between the original parameter spaces $\mP$ and the restricted spaces $\mP_{n,\varepsilon, \lambda}$. Due to the one-dimensional nature of $\frac{1}{n} \log \Pi_{\varsigma^2} (\R_+ \setminus \mP_n^2)$, simple algebraic arguments show that it is bounded above by $-C_{\eta,\xi}$. In contrast, concerning $\frac{1}{n} \log \Pi_{\vartheta} (\R^p \setminus \mP_{n,\varepsilon,\lambda}^1)$, because working with a high-dimensional sphere is challenging, we reduce the problem to a medium-dimensional problem by rotating and truncating the prior distribution. The complexity of the calculations arises primarily from the elliptical support constraints of the prior distribution. In other words, we need to measure the tail probability of norms within an elliptical support. We define $E_1 = \{\bm{\beta}'\in\R^k : \|\tilde{\Sigma}_{1:k}^{1/2}\bm{\beta}'\|_2 \le R\}$ and $E_2 = \{\bm{\beta}'\in\R^k :  \|\bm{\beta}'\|_2 > H_{n,\varepsilon, \lambda}\}$, where $\tilde{\Sigma}_{1:k} = \mathrm{diag}(\hat{\lambda}_1, \ldots, \hat{\lambda}_{k})$.
The condition for $E_1 \cap E_2 = \emptyset$ is given by Weyl’s inequality and Assumption~\ref{ass:emp} as $H_{n,\varepsilon, \lambda} (\lambda_k -\lambda_1\rho_n)^{1/2} > R$. Under this condition, the probability is zero, so the statement holds. Otherwise, applying the Gaussian correlation inequality~\citep{Li1999GaussCorIneq}, we obtain $\pr( E_2 \mid E_1) \le \pr(E_2)$, allowing us to focus on measuring the tail probability of norms without the constraints. Subsequently, we transform the bounds on the tail probabilities of the Gaussian norm into a problem involving dual norms. Finally, properties of covering numbers and concentration inequalities yield the desired result.

\subsection{Connection to Ridge and Ridgeless Estimators} \label{subsec: connect}
In this section, we discuss a connection between the theoretical result for our posterior distribution and the risk of ridge and ridgeless estimators by \cite{tsigler2020benign}.

\cite{tsigler2020benign} assumes a diagonal covariance matrix of the covariates $\Sigma = \mathrm{diag}(\lambda_1,\ldots,\lambda_p)$, decomposes the predictive risk of the ridge/ridgeless estimators into a bias term and a variance term, and derives their upper bounds as follows:
\begin{equation}
    B_{TB} = \left( \frac{\lambda'+\sum_{i=k'+1}^p\lambda_i}{n}\right)^2 \sum_{i=1}^{k'} \frac{{\theta}_i^{*2}}{{\lambda}_i} + \sum_{i=k'+1}^p {\lambda}_i \theta_i^{*2} ,\quad V_{TB} = \frac{k'}{n} + \frac{n \sum_{i>k'} \lambda_{i}^2 }{(\lambda' + \sum_{i>k'} \lambda_{i})^2},
\end{equation}
where $\lambda' \geq 0$ represents the penalty parameter and $k' \in \N$  denotes a certain threshold value.
First, let us focus on the variance term. $\bar{V}_n$ in \eqref{eq:V_n:extension} appears consistent with $V_{TB}$ if we consider the dominance of its final two terms (a reasonable assumption given that the other terms merely consist of logarithmic summations).
Next, we explore the bias term. When analysing $\bar{B}_n$ as defined in \eqref{eq:B_n:extension}, we observe that $\frac{1}{n}\sum_{i=1}^{L_{\kappa}} \frac{\beta_i^{*2}}{{\lambda}_i}$ emerges as the rate-determining term, provided that $\pi_{\kappa}(L_{\kappa})$ and $\tilde{\varepsilon}_n$ do not exhibit exponential decay with respect to $n$. Furthermore, taking into account the assumption~\eqref{eq:condition_ourBias} concerning $\lambda_i$ and $\varepsilon_n$, the following relationships prove crucial (ignoring constant factors):
\begin{equation}\label{bias:ours}
      \frac{1}{n}\sum_{i=1}^{L_{\kappa}} \frac{\beta_i^{*2}}{{\lambda}_i} \le {\varepsilon}_n^2 \quad \mathrm{and} \quad  \sum_{i=L_{\kappa}+1}^p {\lambda}_i \beta_i^{*2} \le  {\varepsilon}_n^2,
\end{equation}
If we have the relation $\bm{\beta}^* = \bm{\theta}^*$, \eqref{bias:ours} is similar to $B_{TB}$. The difference in using $\bm{\beta}^*$ and $\bm{\theta}^*$ in the bounds arises from the fact that \cite{tsigler2020benign} restricts $\Sigma$ to be diagonal, while we do not introduce such a restriction.

\section{Selection Method of Hyperparameters} \label{sec:hyperparameter} 

We discuss a method for selecting important hyperparameters $R, L_\kappa$, and $U_\kappa$. 
While our theory allows for a range of hyperparameter choices satisfying certain conditions, we offer a data-driven method for practical purposes. This is just one proposal, and many other methods are possible.

Regarding the radius $R$ (if finite), the condition $R > 2\|\bm{\theta}^*\|_\Sigma$ is needed theoretically. Since we have $\Ep[y_i^2] = \|\bm{\theta}^*\|_\Sigma^2 + (\sigma^*)^2$, it is sufficient to select $R$ as satisfying $R^2 \geq 4 \Ep[y_i^2]$.
Hence, it is preferable to choose a large value of $R$ such that $\pr ( R > 2 \left( n^{-1} \sum_{i=1}^n y_i^2 \right)^{1/2} ) \geq 1 - \epsilon$ for some small $\epsilon > 0$.
Alternatively, one can simply select a sufficiently large value such as $10^5$ for $R$, which will be used in our experimental section.

Next, we consider the selection of $L_\kappa$ and $U_\kappa$.
Here, the interval $(L_\kappa, U_\kappa)$ should contain a value $k^* := \min \{ k\in\N : \sum_{i=k}^p \lambda_i > n \lambda_k \}$, which is referred to as the effective rank of $\Sigma$~\citep{bartlett2020benign,tsigler2020benign}. Based on this fact, we suggest selecting $L_{\kappa}$ and $U_{\kappa}$ as fractions and multiples of an estimated effective rank, respectively. 
Specifically, we select the parameters as $(L_{\kappa}, U_{\kappa}) = ( \hat{k^*}(\mD_1)/2,\, 2\hat{k^*}(\mD_1) )$, where $\hat{k^*}(\mD_1) := \min \{ k\in\N : \sum_{i=k}^p \hat{\lambda}_i > n \hat{\lambda}_k \}$ is the estimated effective rank.

\section{Simulation}\label{sec:num}
We empirically validated the properties of the posterior distribution in Section~\ref{sec:posterior}.
For sample sizes $n=50,100,200,400,$ and $800$, we generated the true parameter $\bm{\theta}^*\in \mathbb{R}^p$ from $N(\bm{0}, I_p)$, where $p=n^{4/3}$. Concerning the covariates, we set up the following two scenarios:
\begin{align}
    \mathrm{(i)}\ \bm{x}_i \sim N(\bm{0}, \Sigma), \quad i=1,\ldots, n;  \qquad\quad  \mathrm{(ii)}\ \bm{x}_i \sim L(\bm{0}, \Sigma), \quad i=1,\ldots,n.
\end{align}
Here, $L(\bm{0}, \Sigma)$ denotes a zero-mean multivariate Laplace distribution with covariance $\Sigma$. $\Sigma$ was generated as a matrix whose $j$th eigenvalue was $\exp(-j/2) + n\exp\{-n^{1/3}\}/p$ for $j=1,\ldots,p$. Then, we generated the response variable $y_i$ from $N(\bm{x}_i^{\top}\bm{\theta}^*, 1)$.

We generated $50$ different datasets using the above procedure and analysed each of them using the linear regression model in \eqref{eqn-model}. As the Bayesian method, we considered the priors $\Pi_{\vartheta}$ as \eqref{prior:theta} with $R=10^5$, $\Pi_{\varsigma^2}$ as \eqref{def:prior_sigma} with $\eta = \xi = 1$, and $\pi_{\kappa}$ as \eqref{def:prior_k} with $f(k) = 1$ and $(L_{\kappa}, U_{\kappa}) = (\hat{k^*}(\mD)/2, 2\hat{k^*}(\mD))$, where $\hat{k^*}(\mD) := \min \{k : \sum_{i=k}^p\hat{\lambda}_i > n\hat{\lambda}_i\}$. 
Note that the selection of the hyperparameters follows the approach presented in Section \ref{sec:hyperparameter}.
For the computation, we employed the Gibbs sampler~\citep[][]{gelfand1990sampling,gelman2013bayesian} to simulate the posterior distribution of $\bm{\theta}$ and extracted 10,000 posterior samples after a 10,000 burn-in period. The effective sample sizes of the parameters were more than $500$.

\begin{figure}[t]
  \begin{center}
  \includegraphics[width=\textwidth]{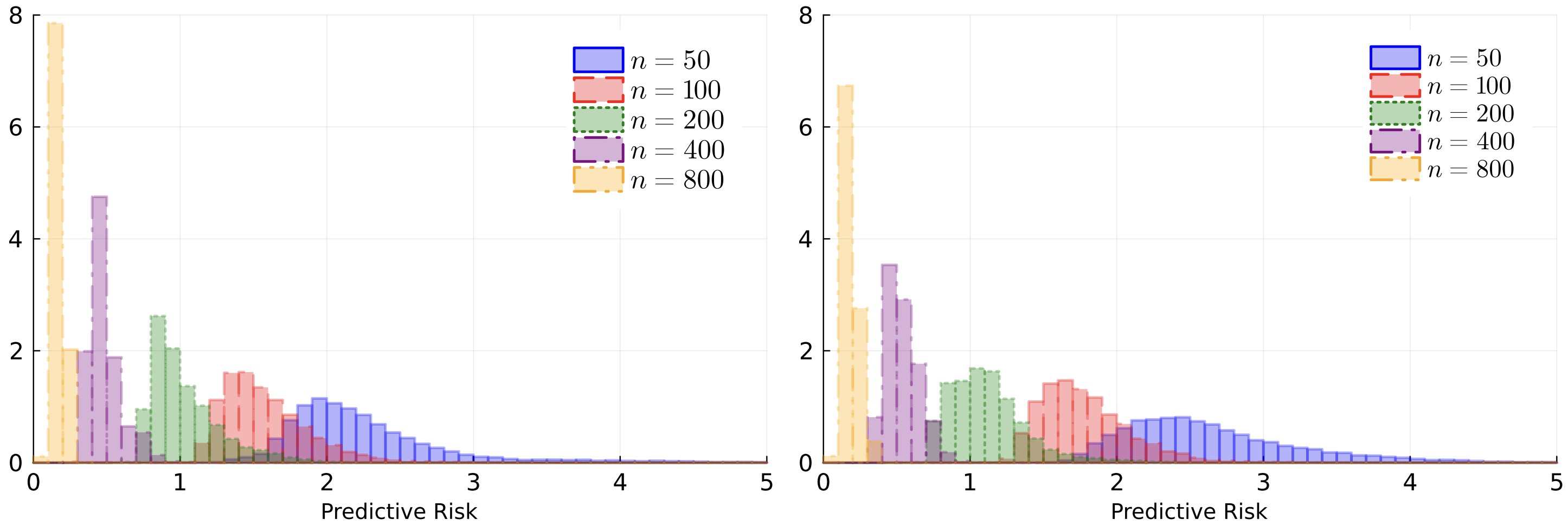}
  \end{center}
  \caption{Left and right figures show the histograms of the predictive risks for $n=50,\ 100, \ 200, \ 400,$ and $800$ in scenarios (i) and (ii), respectively.\label{fig:post1} }
  \vskip -0.2in
\end{figure}

First, we investigated the predictive risk of the proposed method in each scenario. Figure~\ref{fig:post1} illustrates histograms of 500,000 (10,000 samples $\times$ 50 datasets) predictive risks. This indicates that the risk tends to approach zero as the sample size $n$ increases. In addition, the deviation of the posterior risk decreases, and the distribution degenerates to a point-mass distribution at zero. This result is consistent with the theoretical findings presented in Theorems~\ref{thm:poscon}, \ref{thm:posadv}~and~\ref{thm:posext}, suggesting that the prediction works appropriately, even in the case of an over-parameterized model. Moreover, since the Laplace distribution has a fat tail, it may affect the posterior contraction in finite sample analyses. Consequently, the histograms in scenario (ii) are more dispersed compared to those in scenario (i).

\begin{figure}[tb]
  \begin{center}
  \includegraphics[width=\textwidth]{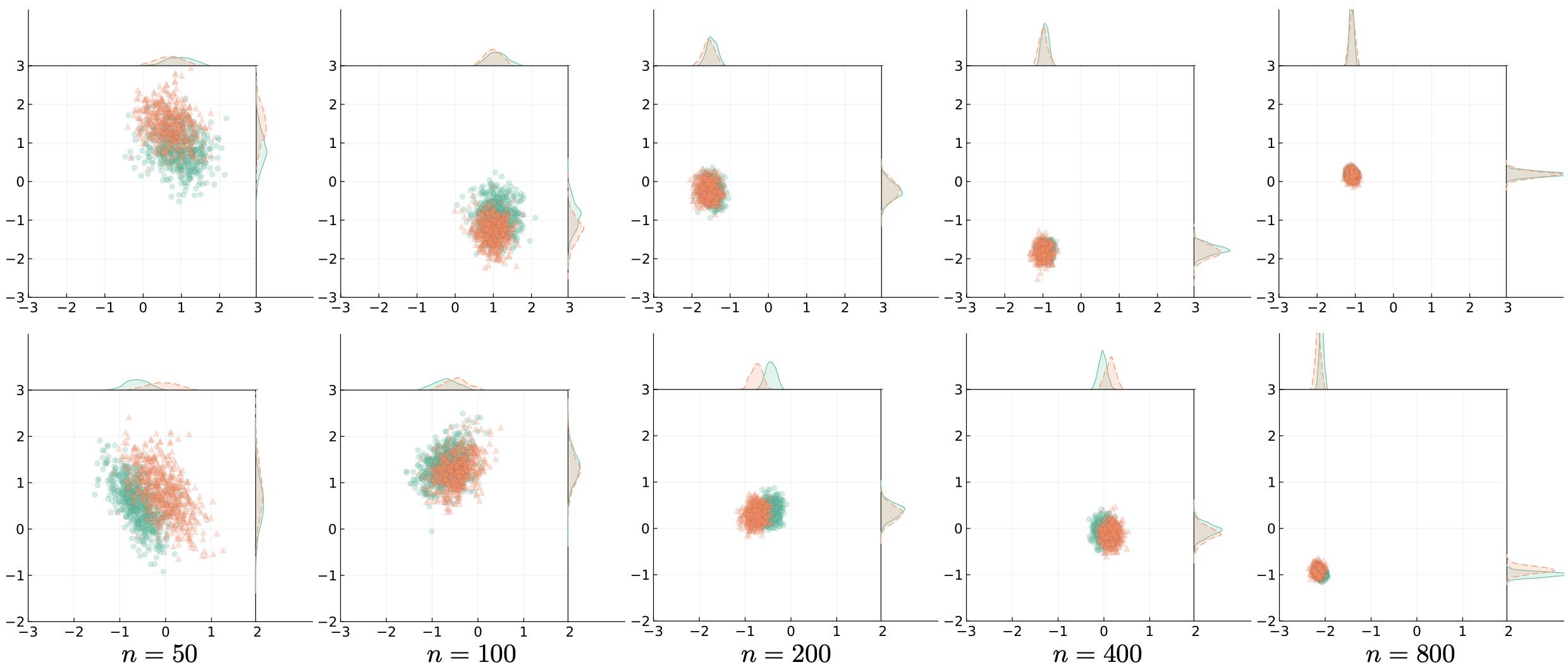}
  \end{center} 
  
  \caption{Circles and triangles respectively show samples by the Gibbs sampler and samples from approximate distributions of the two components of $\bm{\theta}$ with the largest variances in Scenario (i) (Scenario (ii)) for $n=50,\ 100,\ 200,\ 400,$ and $800$ if a panel is in the top row (bottom row).\label{fig:post2}
  The solid curve corresponds to the density estimate for the circle samples, while the dashed curve corresponds to the density estimate for the triangle samples.
  }
  \vskip -0.2in
\end{figure}

Next, we investigated the distributional approximation result of Theorem~\ref{thm:disapp} by visualizing the posterior samples from the Gibbs sampler and the samples from the approximate distribution defined in \eqref{def:approximator_density}. To satisfy the condition in Theorem~\ref{thm:disapp}, we set $f(k) = -k^2$. Figure~\ref{fig:post2} demonstrates the samples from the two distributions for one representative dataset. Specifically, the circles and triangles respectively represent samples by the Gibbs sampler and samples from the approximator of the two components of $\bm{\theta}$ with the largest variances in Scenario (i) (Scenario (ii)) for $n=50,\ 100,\ 200,\ 400,$ and $800$ if the panel is in the top row (bottom row). In both scenarios, the two sample sets increasingly overlap as the sample size grows. This corresponds to the fact that the distance between the posterior distribution and the approximate distribution converges to zero (Theorem~\ref{thm:disapp}). The marginal posterior distribution of each coordinate resembles a bell-shaped symmetrical distribution, supporting the validity of the Gaussian approximation.

\section{Real Data Analysis}\label{sec:real}
We apply the proposed method to a real dataset. 
Specifically, we predict depressive symptoms from the~\textit{HELPfull} dataset included in the \texttt{R} package~\textit{mosaicData}~\citep{mosaicdata}. The dataset contains the results of a clinical trial with adult inpatients selected from detoxification units. We use this dataset to consider the problem of predicting the subjects' CES-D scale (Center for Epidemiologic Studies Depression), a measure of depressive symptoms for which higher scores indicate more severe symptoms. The raw data include $788$ covariates (excluding the CES-D) for $1472$ subjects. Because all subjects had missing values, we removed $320$ covariates in addition to the subject ID and CESD-CUT (binary data of whether the CES-D is greater than $21$), which is highly related to CES-D. We then eliminated any subjects who still had missing values. The data cleaning process yielded 516 samples of 465-dimensional covariates.

We applied the proposed method to the training data, with the following prior setup: $\Pi_{\vartheta}$ as \eqref{prior:theta} with $R=10^5$, $\Pi_{\varsigma^2}$ as \eqref{def:prior_sigma} with $\eta = \xi = 1$, and $\pi_{\kappa}$ as \eqref{def:prior_k} with $f(k) = 1$ and $(L_{\kappa},U_{\kappa}) = (\hat{k^*}(\mD)/2, 2\hat{k^*}(\mD))$, where $\hat{k^*}(\mD) := \min \{k : \sum_{i=k}^p\hat{\lambda}_i > n\hat{\lambda}_i\}$. For the computation, we employed the Gibbs sampler to extract 10,000 posterior samples of $\bm{\theta}$ after a 50,000 burn-in period. As the predictor, we used the average of the posterior predictive samples, each of which is computed by the inner product of the covariates and a posterior sample of $\bm{\theta}$. We conducted 5-fold cross-validation (splitting the samples into five subsets, estimating parameters using four subsets, and making predictions on the remaining one) and quantified the difference between the observed outcomes $y$ and the predicted values using the root mean squared error (RMSE).

We randomly selected $100$ samples from the test data. Figure~\ref{fig:helpfull} then plots the true values, the posterior predictive mean, and the $95\%$ posterior predictive intervals. This demonstrates that the proposed predictor provides accurate predictions for the test data. The RMSE was 1.32, which is relatively small given that the mean and standard deviation of the CES-D scores in the dataset are 20.5 and 14.5, respectively. This result validates the usefulness of the proposed method in over-parameterized settings.

\begin{figure}[t]
  \begin{center}
  \includegraphics[width=\textwidth]{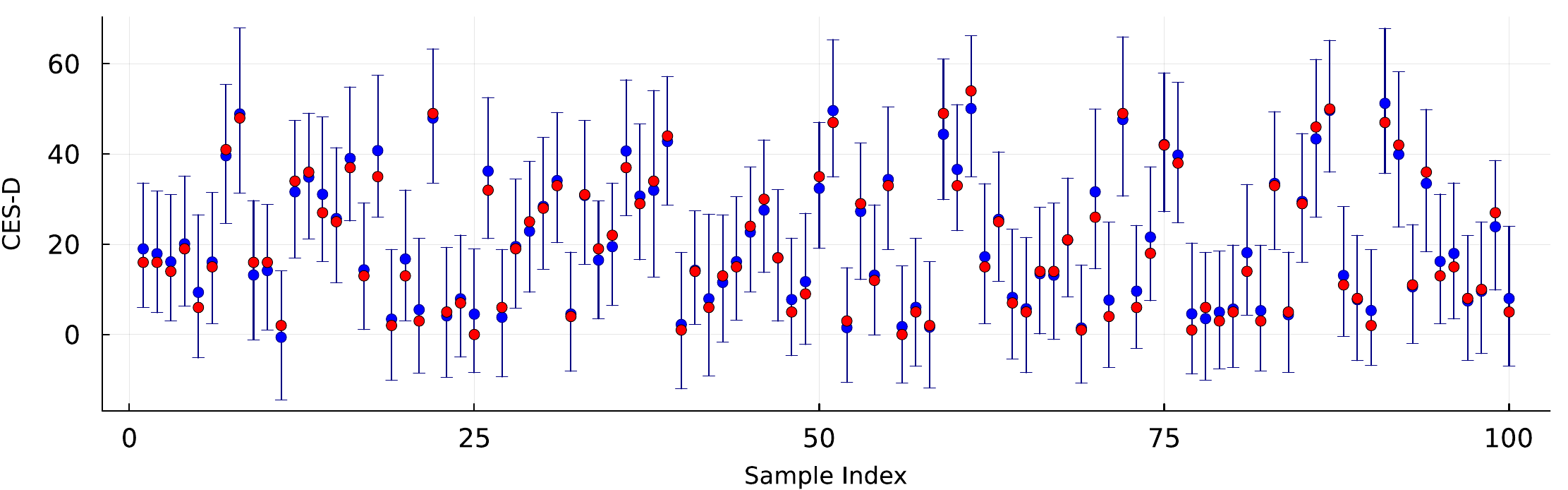}
  \end{center}
  \caption{Plot of the posterior predictive mean (blue points), 95\% posterior predictive interval (bars), and true values (red points) for 100 randomly selected samples from the test data.
  \label{fig:helpfull} }
\end{figure}

\section{Conclusion}\label{sec:dis}
This study focused on the problem of over-parameterized Bayesian linear regression. We departed from the traditional sparsity assumption and explored a non-sparse scenario. Under non-sparse conditions, we formulated a Gaussian prior distribution supported on a subspace reflecting the intrinsic dimensionality and proved the contraction of its posterior distribution. We also described the Gaussian approximation of the posterior distribution. These theoretical claims were further validated through numerical experiments. 

Finally, we highlight the significance of our study by detailing its contributions from two distinct perspectives.

\subsection{Modern Over-Parameterized Theory}

Our method enhances the utility of over-parameterized models from two perspectives: uncertainty evaluation and assumption relaxation.
In high-dimensional linear regression, a phenomenon known as the benign overfitting of the minimum norm interpolator is observed. This interpolator is defined as the estimator with the smallest norm among the solutions that make the quadratic loss zero. Analogously, in our approach, the likelihood function controls the quadratic loss, and the prior distribution controls the norm, resulting in posterior contractions to the true parameters. This enables the uncertainty quantification for the over-parameterized inference.
Furthermore, we contribute to overcoming the sub-Gaussianity restriction on covariates, which was imposed by earlier studies \citep{bartlett2020benign,tsigler2020benign,koehler2021uniform}. Although random variables with thin tails are more likely to achieve posterior contraction (benign overfitting), we prove that heavier-tailed distributions can also lead to posterior contraction. However, the contraction rate may be slow in certain settings, which is a concern. This issue is left for future studies and may be addressed using different proof strategies. 

\subsection{Bayesian High-Dimensional Theory}
The findings of this study advance the posterior contraction and Gaussian approximation theories in a high-dimensional Bayesian context.
While the asymptotic behaviour of high-dimensional Bayesian methods has been extensively studied, theoretical guarantees in over-parameterized regimes remain scarce and typically hinge on sparsity assumptions.
By contrast, this study proposes conditions and prior distributions that facilitate posterior contraction and Gaussian approximation for non-sparse high-dimensional problems by leveraging the intrinsically low dimensionality of the covariates. 
The model considered in this study is standard and does not accommodate flexible structures or non-iid situations. Nevertheless, owing to its simplicity, we successfully elucidated its theoretical properties. Moreover, because the investigation of more complex and adaptive methods often builds upon the analysis of simple methods, the insights gained from this study can serve as a foundation for further research.

\section*{Acknowledgements}
T.Wakayama was supported by JSPS KAKENHI (22J21090) and JST ACT-X (JPMJAX23CS).
M.Imaizumi was supported by JSPS KAKENHI (21K11780), JST CREST (JPMJCR21D2), and JST FOREST (JPMJFR216I).

\appendix

\section{Additional Simulation}\label{sec:exp_compare}
In this section, we focus on predictive risks and compare our approach with other non-sparse high-dimensional methods.
We consider four distinct data-generating scenarios. In each scenario, we fixed the sample size at $n = 600$ and varied the dimensionality $p$ among 300, 600, 1,200, 2,400, and 4,800. The response variable $y_i$ was generated using the linear model $ y_i \sim N(\bm{x}_i^{\top}\bm{\theta}^*, 1) $. The configurations for $\bm{\theta}^* $ and $ \bm{x}_i $ in each scenario are detailed as follows:
\begin{align}
    \mathrm{(i)}\ \bm{\theta}^*\sim N(\bm{0}, I_p),\quad \bm{x}_i\sim N(\bm{0}, \Sigma_{i});  \qquad\quad  
    \mathrm{(ii)}\ \bm{\theta}^*\sim N(\bm{0}, I_p),\quad \bm{x}_i\sim N(\bm{0}, \Sigma_{ii});
\end{align}
\begin{align}
    \mathrm{(iii)}\ \bm{\theta}^*\sim N(\bm{0}, \Sigma_{i}),\quad \bm{x}_i\sim N(\bm{0}, \Sigma_{i});  \qquad\quad  
    \mathrm{(iv)}\ \bm{\theta}^*\sim N(\bm{0}, \Sigma_{ii}),\quad \bm{x}_i\sim N(\bm{0}, \Sigma_{ii}),
\end{align}
where $\Sigma_{i}$ and $\Sigma_{ii}$ were generated as a matrix whose $j$th eigenvalue was $\exp(-j/2) + n\exp\{-n^{1/3}\}/p$ and $j^{-1}$, respectively, for $j=1,\ldots,p$. Notably, while the true parameter vector $\bm{\theta}^*$ and the covariate vectors $\bm{x}_i$ are orthogonal to each other in scenarios (i) and (ii), scenarios (iii) and (iv) consider cases where $\bm{\theta}^*$ and $\bm{x}_i$ are aligned, sharing the same covariance matrices.

We generated $25$ distinct datasets by the aforementioned procedure and analysed each dataset with four regression methods: the principal component regression (PCR) with the minimum number of components such that their cumulative explained variance exceeds 90\%; the minimum norm interpolator (MNI); the posterior mean of Bayesian linear regression with the standard normal prior; and the posterior mean of Bayesian linear regression with the proposed prior. We use the predictive risk to compare prediction performance.

\begin{figure}[t]
  \begin{center}
  \includegraphics[width=\textwidth]{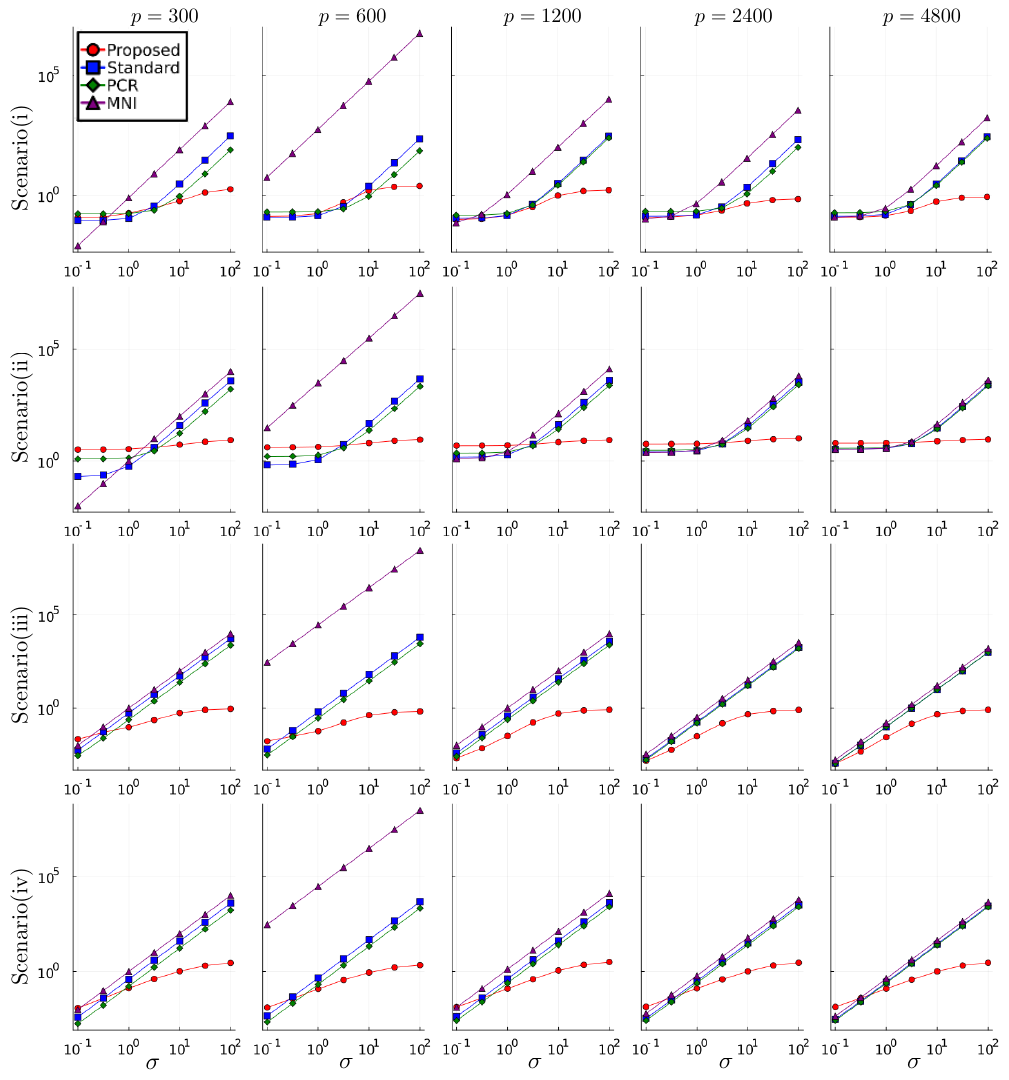}
  \end{center}
  \caption{Predictive risks of Bayesian linear regression with our proposed prior (circle), standard normal prior (square), the principal component regression (diamond) and minimum norm interpolator (triangle) for Scenarios (i)--(iv). \label{fig:sims} }
\end{figure}
Figure~\ref{fig:sims} shows the average predictive risks of each method, averaged over 25 datasets. In general, our proposed method exhibits lower predictive risks in higher-dimensional settings. While our method is not specifically designed for low-dimensional settings, unlike the other estimators, it still performs reasonably well when $p$ is relatively small. Notably, the proposed method also tends to excel in situations where the true parameter and the covariate are aligned. In Scenarios (i) and (iii), the proposed method is particularly effective in identifying and utilizing low-frequency components, which enhances its predictive performance. In contrast, in Scenarios (ii) and (iv), where eigenvalues decrease gradually, the other methods also work well.

\section{Examples of Setting of Theorem~\ref{thm:poscon}}

We present examples of $\Sigma$ and our prior that satisfy the theorem's assumptions and specify the effective variance and bias bounds.

\begin{example}
    Suppose the setup with the dimension $p=n^{1.6}$. Consider eigenvalues $\{\lambda_k\}_{k=1}^p$ taking the form
    \begin{equation}
        \lambda_k = \begin{cases}
            1 & 1\le i \le \frac{\sqrt{n}}{ (\log n)^2} \\
            \frac{1}{n^{2.5}} & \mathrm{otherwise}
        \end{cases}
    \end{equation}
    Then, if $L_{\kappa} = n^{1/2}/ (\log n)^2$, $U_{\kappa} = n^{1/2}/ \log n$ and $\pi_{\kappa}(L_{\kappa}) > c$ with some positive constant $c$, the following evaluation holds:
    \begin{align*}
        V_n = O\left( \frac{1}{ \sqrt{n}\log n }  \right) = B_n .
    \end{align*}
\end{example} 

\begin{example}
    Suppose the setup with the dimension $p=n^2$. Consider eigenvalues $\{\lambda_k\}_{k=1}^p$ of the following form
    \begin{equation}
        \lambda_k = \begin{cases}
            \frac{1}{\log\log(e+i)} & 1\le i \le \frac{n^{1/3}}{\log^2 n} \\
            \frac{1}{n^4} & \mathrm{otherwise}
        \end{cases}
    \end{equation}
    Then, if $L_{\kappa} = n^{1/3}/(\log n)^2$, $U_{\kappa} =  n^{1/3}/(\log\log n)^2$ and $\pi_{\kappa}(L_{\kappa}) > c$ with some positive constant $c$, the following evaluation holds:
    \begin{align*}
        V_n = O\left( \frac{1}{ (\log n)^5 }  \right)   \quad \mathrm{and} \quad  B_n  = O\left( \frac{(\log n)^2}{n} \right).
    \end{align*}
\end{example} 

\section{Proofs of Theorem~\ref{thm:posext}} \label{app:poscon}
In this section, we present three lemmas essential for establishing Theorem~\ref{thm:posext}, accompanied by their respective proofs.

\subsection{Lemmas for Inequalities} 

\begin{lemma} \label{lem:bound1}
    For any $\varepsilon>0$ and some $\lambda\ge 0$, we have the following for any sufficiently large $n \in \N$:
    \begin{align}
       \frac1n\log \mN(\varepsilon, \mP_{n,\varepsilon, \lambda}, d_{\mP}) \lesssim 
      &\frac1n \log\left( \frac{n}{2\varepsilon}\right) +
    \sum_{i\in\bar{\mI} } \frac1n \log \left( \frac{n\sqrt{\lambda_i} \sqrt{\sum_{i>L_{\kappa}} \lambda_{i}^2}}{  (\lambda + \sum_{i>L_{\kappa}} \lambda_{i}) \sqrt{\sum_{i>L_{\kappa}} \lambda_{i}} } \right) + \frac{U_{\kappa}}{n} \\
    &+ \frac{n \sum_{i>L_{\kappa}} \lambda_{i}^2 }{(\lambda + \sum_{i>L_{\kappa}} \lambda_{i})^2},
    \end{align}
    where $\bar{\mI} := \{ i=1,\ldots,U_{\kappa} : n  ({\lambda_i \sum_{i>L_{\kappa}} \lambda_{i}^2})^{1/2} > (\sum_{i>L_{\kappa}} \lambda_{i})^{1/2} (\lambda + \sum_{i>L_{\kappa}} \lambda_{i}) \}$.
\end{lemma}

\begin{lemma} \label{lem:bound2}
    If Assumption~\ref{ass:emp} with $\rho_n = o(\lambda_{L_{\kappa}+1})$ holds, for any $\varepsilon>0$ and some $\lambda\ge 0$, we have the following for any sufficiently large $n \in \N$, with probability at least $1-\ell_n$:
    \begin{align}
        &\Pi_{\vartheta,\varsigma^2}(\mP\setminus \mP_{n,\varepsilon, \lambda} \mid \mD_1)  \\
        &\leq  \exp\left(-n C_{\eta,\xi}\right) +\\
        & \sum_{k =L_{\kappa}}^{U_{\kappa}} \pi_{\kappa}(k) \exp\left( 4k - \frac{1}{16\lambda_1} \frac{n^2 \varepsilon^2 \sum_{i>L_{\kappa}} \lambda_{i}^2 }{ \sum_{i>L_{\kappa}} \lambda_{i}  (\lambda + \sum_{i>L_{\kappa}} \lambda_{i})^2}\right)
        \mone\left\{ \frac{n^2 \varepsilon^2 (\lambda_k -\lambda_1\rho_n) \sum_{i>L_{\kappa}} \lambda_{i}^2 }{ \sum_{i>L_{\kappa}} \lambda_{i}  (\lambda + \sum_{i>L_{\kappa}} \lambda_{i})^2} \le R^2 \right\} ,
    \end{align}  
    where $C_{\eta,\xi}$ is a constant factor depending upon inverse-Gaussian parameters $\eta$ and $\xi$.
\end{lemma}

\begin{lemma} \label{lem:bound3}
    Suppose that Assumption~\ref{ass:emp} with $\rho_n = o(\lambda_{L_{\kappa}+1})$, $\| \bm{\theta}^* \|_{\Sigma} <R/2$ and $\| \bm{\theta}^* \|_2 < \infty$ hold.
    For a sequence $\{\varepsilon_n\}_{n \in \N}$ such that $\varepsilon_n\to 0$ and $\tilde{\varepsilon}_n^2 := {\varepsilon}_n^2 - \sum_{i=L_{\kappa}+1}^p ({\lambda}_i + 2\lambda_1\rho_n)\beta_i^{*2} >0$, where $\bm{\beta}^* := (\hat{\bm{v}}_1,\ldots,\hat{\bm{v}}_p)^{\top} \bm{\theta}^*$,
    we have the following with sufficiently large $n \in \N$, with probability at least $1-\ell_n$:
    \begin{align}
        -\frac{1}{n}\log \Pi_{\vartheta,\varsigma^2} (\mB_{KL}(\varepsilon_n)\mid \mD_1) \le \frac{1}{n}\log\frac{1}{\varepsilon_n\pi_{\kappa}(L_{\kappa})} + \frac1n\sum_{i=1}^{L_{\kappa}} \frac{\beta_i^{*2}}{{\lambda}_i} + \frac{L_{\kappa}}{n} \log \left(\frac{{ 2 L_{\kappa} \lambda_1}}{\tilde{\varepsilon}_n}\right).
    \end{align}
\end{lemma}

\subsection{Proofs of Lemmas~\ref{lem:bound1}-\ref{lem:bound3} } 
\begin{proof}[of Lemma~\ref{lem:bound1}]
For notational brevity, let $H_{n,\varepsilon, \lambda}^2 = n^2 \varepsilon^2 \sum_{i>L_{\kappa}} \lambda_{i}^2 / \sum_{i>L_{\kappa}} \lambda_{i}  (\lambda + \sum_{i>L_{\kappa}} \lambda_{i})^2$.
At the beginning, we decompose the covering number in \eqref{ggv-thm2.1-1} and obtain
\begin{align}
    \log \mN(\varepsilon, \mP_{n,\varepsilon, \lambda}, d_{\mP}) \le \log \mN(\varepsilon,\mP_{n}^2,|\cdot|)+\log \mN(\varepsilon,\mP_{n,\varepsilon,\lambda}^1,\|\cdot\|_{\Sigma}). \label{ineq:bound1_1}
\end{align}
The first term of the right-hand side of \eqref{ineq:bound1_1} is bounded by $\log ( n/2\varepsilon)$, because the length of the interval $\mP_{n}^2$ is less than $n$ and $n/2\varepsilon$ open intervals of length $2\varepsilon$ are sufficient to cover it.

We focus mainly on the second term on the right-hand side of \eqref{ineq:bound1_1}.
From Lemma~\ref{lem:cov}, we obtain 
\begin{align} 
    \log \mN(\varepsilon,\mP_{n,\varepsilon,\lambda}^1,\|\cdot\|_{\Sigma})
    &= \log \mN(\varepsilon,\Sigma^{1/2}\mP_{n,\varepsilon,\lambda}^1,\|\cdot\|_{2})\\
    &\leq \log \mN(\varepsilon/2,\Sigma_{1:k}^{1/2}\mP_{n,\varepsilon,\lambda}^1,\|\cdot\|_{2})+\log \mN(\varepsilon/2,\Sigma_{k+1:p}^{1/2}\mP_{n,\varepsilon,\lambda}^1,\|\cdot\|_{2})\\
    &= \log \underbrace{\mN(\varepsilon/2,\mP_{n,\varepsilon,\lambda}^1,\|\cdot\|_{\Sigma_{1:k}})}_{=:\mN_1}+\log \underbrace{\mN(\varepsilon/2,\Sigma_{k+1:p}^{1/2}\mP_{n,\varepsilon,\lambda}^1,\|\cdot\|_{2})}_{=:\mN_2},
\end{align}
for any $k \in [1:p-1]$.
For algebraic operations of the covering number, refer to Chapter 7.1 of \citep{temlyakov2018multivariate}. To simplify the notation, we substitute $\varepsilon/2$ into $\varepsilon$. This substitution does not affect the following results.

In the following, we will bound $\mN_1$ and $\mN_2$ separately.

\textbf{Step (i): Bound $\mN_1$}. 
In this step, we derive an upper bound for $\log \mN_1$. We define the orthogonal matrix $V = (\bm{v}_1,\ldots\bm{v}_p)\in \R^{p \times p}$, where $\bm{v}_j$ denotes the eigenvectors defined in Section \ref{sec:pri}. For any $\bm{h}_1,\bm{h}_2\in \mathbb{R}^p$, it holds that
\begin{align}
    (\bm{h}_1-\bm{h}_2)^{\top}\Sigma_{1:k}(\bm{h}_1-\bm{h}_2) 
    &= (\bm{h}_1-\bm{h}_2)^{\top}V\mathrm{diag}(\lambda_1,\ldots\lambda_{k},0\ldots,0)V^{\top}(\bm{h}_1-\bm{h}_2)\\
    &= (\bm{h}_1'-\bm{h}_2')^{\top}\mathrm{diag}(\lambda_1,\ldots\lambda_{k},0\ldots,0)(\bm{h}_1'-\bm{h}_2'), \ \ \ \ \bm{h}_1',\bm{h}_2'\in V^{\top}\mathbb{R}^p.
\end{align}
This implies that if the first $k$ components of two elements $\bm{h}_1',\bm{h}_2'$ in $V^{\top}\mathbb{R}^p(=\mathbb{R}^p)$ are the same, the distance $(\bm{h}_1'-\bm{h}_2')^{\top}\mathrm{diag}(\lambda_1,\ldots\lambda_{k},0\ldots,0)(\bm{h}_1'-\bm{h}_2')$ is zero. As the $p-k$ components of $V^{\top}\mathbb{R}^p(=\mathbb{R}^p)$ can be ignored, we have
\begin{align}
    \mN_1 &= \mN(\varepsilon,\{\bm{\theta}\in\mathbb{R}^{k}:\|\bm{\theta}\|_2\leq H_{n,\varepsilon, \lambda}\},\|\cdot\|_{\check{\Sigma}_{1:k}}) \\
    & = \mN(\varepsilon,\{\check{\Sigma}_{1:k}^{1/2}\bm{\theta}\in\mathbb{R}^{k}:\|\bm{\theta}\|_2\leq H_{n,\varepsilon, \lambda}\},\|\cdot\|_{2}) \\
    & = \mN(\varepsilon,\{\bm{\theta}\in\mathbb{R}^{k}:\|\check{\Sigma}_{1:k}^{-1/2}\bm{\theta}\|_2\leq H_{n,\varepsilon, \lambda}\},\|\cdot\|_{2}) \\
    & = \mN(1,\{\bm{\theta}\in\mathbb{R}^{k}:\|\check{\Sigma}_{1:k}^{-1/2}\bm{\theta}\|_2\leq H_{n,\varepsilon, \lambda}/\varepsilon\},\|\cdot\|_{2}) 
    \label{ineq:bound1_2}
\end{align}
where $\check{\Sigma}_{1:k} = \mathrm{diag}(\lambda_1,\ldots\lambda_{k})$. Then, it is sufficient to calculate the covering number of the following ellipsoid with unit Euclidean balls:
\begin{equation} 
\left\{\bm{\theta} = (\theta_1,\theta_2,\ldots,\theta_k)^{\top}\in\mathbb{R}^{k}: \sum_{i=1}^k \frac{\theta_i^2}{(\sqrt{\lambda_i} H_{n,\varepsilon, \lambda} /\varepsilon )^2} \leq 1 \right\}.
\end{equation}
Let $\bar{\mI} := \{ i=1,\ldots,k : \sqrt{\lambda_i} H_{n,\varepsilon, \lambda} /\varepsilon > 1 \}$. Using Theorem~2 in \cite{DUMER20061667}, we obtain
\begin{equation}\label{ineq:bound1_3}
    \log \mN_1 \lesssim \sum_{i\in\bar{\mI} } \log \left( \sqrt{\lambda_i} H_{n,\varepsilon, \lambda}/\varepsilon \right) + k.
\end{equation}

\textbf{Step (ii): Bound $\mN_2$}.
Next, we consider the upper bound of $\mN_2$. From the Sudakov minoration, we have
\begin{align}
 \log \mN_2 = \log \mN(\varepsilon, \Sigma_{k+1:p}^{1/2}\mP_{n,\varepsilon,\lambda}^1,\|\cdot\|_2)
    \leq \varepsilon^{-2} \Ep\left[\sup_{\bm{h}\in \Sigma_{k+1:p}^{1/2}\mP_{n,\varepsilon,\lambda}^1} \bm{h}^{\top} \bm{Z} \right]^2, \label{ineq:bound1_4}
\end{align}
where $\bm{Z}\sim N(\bm{0}, I_{p})$ is an independent random variable. We define $\bm{Z}':=\Sigma_{k+1:p}^{1/2}\bm{Z}$, where $B_2^p$ is a unit ball in $\R^p$ with respect to $\|\cdot\|_2$. Then, we obtain the following relations:
\begin{align}
    \Ep\left[ \sup_{\bm{h}\in \Sigma_{k+1:p}^{1/2}\mP_{n,\varepsilon,\lambda}^1} \bm{h}^{\top} \bm{Z} \right] 
    &\leq  \Ep\left[ \sup_{\bm{h}\in \mP_{n,\varepsilon,\lambda}^1} \bm{h}^{\top} \bm{Z}' \right]=H_{n,\varepsilon, \lambda} \Ep \left[\sup_{\bm{h}\in B_2^p} \bm{h}^{\top} \bm{Z}'\right] \\
    &=H_{n,\varepsilon, \lambda}\Ep[\|\bm{Z}'\|_2] \leq H_{n,\varepsilon, \lambda}\sqrt{\Ep[ \|\bm{Z}'\|_2^2]}.
\end{align}
From $\Ep[ \|\bm{Z}'\|_2^2] = \sum_{j=k+1}^{p} \lambda_{j}$ and \eqref{ineq:bound1_4}, we have
\begin{align}
    \log \mN_2 = \log \mN(\varepsilon,\mP_{n,\varepsilon,\lambda}^1,\|\cdot\|_{\Sigma_{k+1:p}})\leq \frac{H_{n,\varepsilon, \lambda}^2}{\varepsilon^2} \sum_{j=k+1}^p \lambda_{j} . \label{ineq:bound1_5}
\end{align}

\textbf{Step (iii): Integrate the results}.
We simply combine the bounds \eqref{ineq:bound1_3} and \eqref{ineq:bound1_5} into the first inequality \eqref{ineq:bound1_1} to obtain the statement.
\end{proof}

\begin{proof}[of Lemma~\ref{lem:bound2}]
We start by decomposing the prior distribution as
\begin{align}
    &\Pi_{\vartheta,\varsigma^2}(\mP\setminus \mP_{n,\varepsilon, \lambda}\mid\mD_1) \\
    &\le \sum_{k =L_{\kappa}}^{U_{\kappa}} \pi_{\kappa}(k)\left\{\Pi_{\vartheta}(\{ \bm{\theta} : \|\bm{\theta}\|_2\ge H_{n,\varepsilon, \lambda} \}\mid \mD_1,k) + \Pi_{\varsigma^2}(\{ \sigma^2 : \sigma^2 \le n^{-1} \}) + \Pi_{\varsigma^2}(\{ \sigma^2 : \sigma^2 \ge n\})\right\}\\
    &= \sum_{k =L_{\kappa}}^{U_{\kappa}} \pi_{\kappa}(k)\Pi_{\vartheta}(\{ \bm{\theta} : \|\bm{\theta}\|_2\ge H_{n,\varepsilon, \lambda} \}\mid \mD_1,k) + \Pi_{\varsigma^2}(\{ \sigma^2 : \sigma^2 \le n^{-1} \}) + \Pi_{\varsigma^2}(\{ \sigma^2 : \sigma^2 \ge n\}). \label{ineq:bound2_1}
\end{align}
Regarding the last two terms on the right-hand side, as the prior for $\sigma^2$ is inverse-Gaussian, the tail decay is exponentially fast, and $\Pi_{\varsigma^2}(\{ \sigma^2 : \sigma^2 \le n^{-1} \})$ and $\Pi_{\varsigma^2}(\{ \sigma^2 : \sigma^2 \ge n\})$ are bounded by $e^{-nC_{\eta,\xi}}$ with a constant $C_{\eta,\xi}$ depending on $\eta$ and $\xi$. 

Next, we rewrite the first $U_{\kappa}-L_{\kappa}+1$ terms of \eqref{ineq:bound2_1} as
\begin{equation}
    \sum_{k =L_{\kappa}}^{U_{\kappa}} \pi_{\kappa}(k)\Pi_{\mathfrak{b}} (\{\bm{\beta}_{1:k} :  \|\bm{\beta}_{1:k}\|_2>H_{n,\varepsilon, \lambda} \} \mid \mD_1, k). \label{ineq:bound2_2} 
\end{equation}
We fix $k$ and examine $\Pi_{\mathfrak{b}} (\{\bm{\beta}_{1:k} : \|\bm{\beta}_{1:k}\|_2>H_{n,\varepsilon, \lambda}\}\mid \mD_1, k)$.
Remark that this probability is zero if $\{\bm{\beta}_{1:k}:\|\bm{\beta}_{1:k}\|_2>H_{n,\varepsilon, \lambda}\} \cap \{\bm{\beta}_{1:k}:\|\tilde{\Sigma}_{1:k}^{1/2}\bm{\beta}_{1:k}\|_2 \le R \}=\varnothing$, which holds with probability at least $1-\ell_n$ under the condition $H_{n,\varepsilon, \lambda} (\lambda_k -\lambda_1\rho_n)^{1/2} > R$. Indeed, if $\|\bm{\beta}_{1:k}\|_2>H_{n,\varepsilon, \lambda}$ and $H_{n,\varepsilon, \lambda} (\lambda_k -\lambda_1\rho_n)^{1/2} > R$, $\|\tilde{\Sigma}_{1:k}^{1/2}\bm{\beta}_{1:k}\|_2 > \hat{\lambda}_k^{1/2} H_{n,\varepsilon, \lambda} > (\lambda_k -\lambda_1\rho_n)^{1/2} H_{n,\varepsilon, \lambda} > R$, where the second inequality follows from Lemma~\ref{lem:eigen-error}. Hence, we develop an upper bound of $\Pi_{\mathfrak{b}} (\{\bm{\beta}_{1:k} : \|\bm{\beta}_{1:k}\|_2>H_{n,\varepsilon, \lambda}\}\mid \mD_1, k)$ under the condition $H_{n,\varepsilon, \lambda} (\lambda_k -\lambda_1\rho_n)^{1/2} \le R$ in this paragraph. We define a Gaussian measure $\check{\Pi}_{\mathfrak{b}} (\cdot) := N(\bm{0},\mathrm{diag}(\hat{\lambda}_1,\ldots\hat{\lambda}_{k}))(\cdot)$, which is an untruncated version of $\Pi_{\mathfrak{b}}(\cdot)$. The truncated Gaussian measure $\Pi_{\mathfrak{b}}$ has a larger mass on a centred set $\{\bm{\beta}_{1:k}\in\R^k : \|\bm{\beta}_{1:k}\|_2 \leq H_{n,\varepsilon, \lambda}\}$ than the untruncated Gaussian measure $\check{\Pi}_{\mathfrak{b}}$, because we have, from the Gaussian correlation inequality~\citep{Li1999GaussCorIneq}, 
\begin{align*}
    & \Pi_{\mathfrak{b}}(\{\bm{\beta}_{1:k}\in\R^k :  \|\bm{\beta}_{1:k}\|_2 \le H_{n,\varepsilon, \lambda}\}\mid \mD_1, k) \\
    & = \frac{\check{\Pi}_{\mathfrak{b}}(\{\bm{\beta}_{1:k}:\|\tilde{\Sigma}_{1:k}^{1/2}\bm{\beta}_{1:k}\|_2 \le R \} \cap \{\bm{\beta}_{1:k}\in\R^k :  \|\bm{\beta}_{1:k}\|_2 \le H_{n,\varepsilon, \lambda}\}\mid \mD_1, k) }{ \check{\Pi}_{\mathfrak{b}}(\{\bm{\beta}_{1:k}:\|\tilde{\Sigma}_{1:k}^{1/2}\bm{\beta}_{1:k}\|_2 \le R \} \mid \mD_1, k)} \\
    & \geq \check{\Pi}_{\mathfrak{b}}(\{\bm{\beta}_{1:k}\in\R^k :  \|\bm{\beta}_{1:k}\|_2 \le H_{n,\varepsilon, \lambda}\}\mid \mD_1, k) .
\end{align*} 
Hence, we obtain
\begin{align}
 \Pi_{\mathfrak{b}}(\{\bm{\beta}_{1:k}\in\R^k :  \|\bm{\beta}_{1:k}\|_2 > H_{n,\varepsilon, \lambda}\}\mid \mD_1, k) 
 &\leq \check{\Pi}_{\mathfrak{b}}(\{\bm{\beta}_{1:k}\in\R^k :  \|\bm{\beta}_{1:k}\|_2 > H_{n,\varepsilon, \lambda}\}\mid \mD_1, k) \\
 &\leq \check{\Pi}_{\mathfrak{b}}\left(\left\{ \bm{\beta}_{1:k}\in\R^k : \max_{\bm{z}\in N_{1/2}}\bm{z}^{\top}\bm{\beta}_{1:k}> H_{n,\varepsilon, \lambda}/2 \right\}\mid \mD_1, k\right)\\
 &\leq |N_{1/2}|\max_{\bm{z}\in N_{1/2}} \check{\Pi}_{\mathfrak{b}}(\{ \bm{\beta}_{1:k} : \bm{z}^{\top}\bm{\beta}_{1:k}> H_{n,\varepsilon, \lambda}/2\}\mid \mD_1, k ) \\
 &\leq  |N_{1/2}|\exp\left(-\frac{H_{n,\varepsilon, \lambda}^{2}}{8\hat{\lambda}_1}\right),\label{eqn:FuH_}
\end{align}
where $N_{1/2} (\subset B_2^k)$ denotes the $1/2$-minimal covering set of the unit ball $B_2^k \subset \R^k$. The second inequality holds because Lemma~\ref{lem:l2ball} shows $\|\bm{\beta}_{1:k}\|_2\leq 2\max_{\bm{z}\in N_{1/2}}\bm{z}^{\top}\bm{\beta}_{1:k}$.
The last inequality holds via the Chernoff bound with the sub-Gaussian $\bm{z}^{\top}\bm{\beta}_{1:k}$. 
Let $\bm{Z}\sim N(\bm{0},I_{k})$. From the Sudakov minoration as \eqref{ineq:bound1_4} and Jensen's inequality, we have
\begin{align} 
 |N_{1/2}|
 \leq \exp\left(2^2\Ep\left[\sup_{\bm{h}\in B_2^k}\bm{h}^{\top}\bm{Z} \right]^2\right)
 \leq \exp(4\Ep[\|\bm{Z}\|_2]^2)
 \leq \exp(4\Ep[\|\bm{Z}\|^2_2])
 =\exp(4k).\label{ineq:Fcover_}
\end{align}
From \eqref{eqn:FuH_} and \eqref{ineq:Fcover_}, we obtain
$\Pi_{\mathfrak{b}}(\{\bm{\beta}_{1:k}\in\R^k :  \|\bm{\beta}_{1:k}\|_2 > H_{n,\varepsilon, \lambda}\}\mid \mD_1, k)\le \exp(4k-{H_{n,\varepsilon, \lambda}^2}/(8\hat{\lambda}_1))$.

We substitute this result into \eqref{ineq:bound2_2} and obtain the following with probability at least $1-\ell_n$:
\begin{align}
    &\sum_{k =L_{\kappa}}^{U_{\kappa}} \pi_{\kappa}(k)\Pi_{\mathfrak{b}} (\{\bm{\beta}_{1:k} : \|\bm{\beta}_{1:k}\|_2>H_{n,\varepsilon, \lambda} \} \mid \mD_1, k) \nonumber \\
    &\le \sum_{k =L_{\kappa}}^{U_{\kappa}} \pi_{\kappa}(k)\exp\left(4k-\frac{H_{n,\varepsilon, \lambda}^{2}}{8\hat{\lambda}_1}\right) \mone\left\{H_{n,\varepsilon, \lambda} (\lambda_k -\lambda_1\rho_n)^{1/2} \le R\right\} \\
    &= \sum_{k =L_{\kappa}}^{U_{\kappa}} \pi_{\kappa}(k)\exp\left(4k-\frac{\lambda_1}{\hat{\lambda}_1}\frac{H_{n,\varepsilon, \lambda}^{2}}{8\lambda_1}\right)\mone\left\{H_{n,\varepsilon, \lambda} (\lambda_k -\lambda_1\rho_n)^{1/2} \le R\right\}. \label{ineq-tail_}
\end{align}
By Assumption~\ref{ass:emp}, we have  
\begin{align}
    \left|\frac{\hat{\lambda}_1}{\lambda_1}-1\right| = \frac{|\hat{\lambda}_1-\lambda_1|}{\lambda_1} \leq \frac{\|\hat{\Sigma}-\Sigma\|_{\mathrm{op}}}{\|\Sigma\|_{\mathrm{op}}} \leq \rho_n = o(1)
\end{align}
with probability at least $1-\ell_n$.
We apply this result to \eqref{ineq-tail_} and obtain the following with probability at least $1-\ell_n$:
\begin{align}
    &\sum_{k =L_{\kappa}}^{U_{\kappa}} \pi_{\kappa}(k)\Pi_{\mathfrak{b}} (\{ \bm{\beta}_{1:k} : \|\bm{\beta}_{1:k}\|_2>H_{n,\varepsilon, \lambda} \} \mid \mD_1, k) \\
    &\le \sum_{k =L_{\kappa}}^{U_{\kappa}} \pi_{\kappa}(k) \exp\left(4k- \frac{1}{(1+\rho_n)}\frac{H_{n,\varepsilon, \lambda}^{2}}{8\lambda_1}\right)\mone\left\{H_{n,\varepsilon, \lambda} (\lambda_k -\lambda_1\rho_n)^{1/2} \le R \right\}.
\end{align}  
Then, we obtain the statement. 
\end{proof}

\begin{proof}[of Lemma~\ref{lem:bound3}]
Following~\cite{ning2020bayesian,ghosal2017fundamentals}, the KL divergence $\KL(P^*, P_{\bm{\theta}, \sigma^2})$ is written as
\begin{align}
    \KL(P^*, P_{\bm{\theta}, \sigma^2}) 
    &= \int \log \left( \frac{\mathrm{d}P^*(y,\bm{x})}{\mathrm{d}P_{\bm{\theta}, \sigma^2}(y,\bm{x})}\right) \mathrm{d}P^*(y,\bm{x})\\
    &= \frac12\frac{(\sigma^*)^2}{\sigma^2} - \frac{1}{2}\log \left\{\frac{(\sigma^*)^2}{\sigma^2}\right\} -\frac12+ \frac12\frac{ {\|\bm{\theta}^* - \bm{\theta}\|_{\Sigma}^2} }{\sigma^2}.
\end{align}
Similarly, the KL variation $\KV(P^*, P_{\bm{\theta}, \sigma^2}) $ is 
\begin{align}
    \KV(P^*, P_{\bm{\theta}, \sigma^2}) &= \frac12\frac{(\sigma^*)^4}{\sigma^4} - \frac{(\sigma^*)^2}{\sigma^2} +\frac12+ \frac{(\sigma^*)^2}{\sigma^2}\frac{\|\bm{\theta}^* - \bm{\theta}\|^2_{\Sigma}}{\sigma^2}.
\end{align}
Hence, using the subsets $\mB_1 \subset \R_+$ and $ \mB_2 \subset \R^p \times \R_+$ defined in \eqref{def:B1}~and\eqref{def:B2}, it holds that
\begin{align}
    \Pi_{\vartheta,\varsigma^2} \left(\left\{(\bm{\theta},\sigma^2): \KL(P^*, P_{\bm{\theta}, \sigma^2}) \leq \varepsilon_n^2, 
	\KV(P^*, P_{\bm{\theta}, \sigma^2}) \leq \varepsilon_n^2 \right\} \mid \mD_1\right)
    \ge
    \Pi_{\vartheta,\varsigma^2} \left(\mB_2\mid \mB_1 , \mD_1\right)\Pi_{\varsigma^2} \left(\mB_1\right). \label{eq:Pi_B1_B2}
\end{align}
In the following, we present lower bounds on  $\Pi_{\varsigma^2} \left(\mB_1\right)$ and $\Pi_{\vartheta,\varsigma^2} (\mB_2\mid \mB_1, \mD_1)$.

\textbf{Step (i): Bound $\Pi_{\varsigma^2} \left(\mB_1\right)$}. First, we discuss a bound on $\Pi_{\varsigma^2} \left(\mB_1\right)$. As the logarithm has Maclaurin series $\log(1+x)= x-\frac{x^2}{2}+o(x^2)$ for $|x|<1$, we obtain
\begin{align}
    &\frac{(\sigma^*)^2}{\sigma^2}-\log\left( 1 + \frac{(\sigma^*)^2}{\sigma^2} -1\right) -1 \\
    &= \frac{(\sigma^*)^2}{\sigma^2}- \left\{\frac{(\sigma^*)^2}{\sigma^2} -1 - \frac12 \left(\frac{(\sigma^*)^2}{\sigma^2} -1\right)^2\right\}-1+o\left(\left(\frac{(\sigma^*)^2}{\sigma^2}-1\right)^2\right) \\
    &=  \frac12 \left(\frac{(\sigma^*)^4}{\sigma^4}-\frac{2(\sigma^*)^2}{\sigma^2} +1\right)+o\left(\left(\frac{(\sigma^*)^2}{\sigma^2}-1\right)^2\right).
\end{align}
We introduce a symbol $\Delta = {(\sigma^*)^2}/{\sigma^2}-1$. Then, we have $\Pi_{\varsigma^2}(\mB_1) = \Pi_{\varsigma^2}(\{ \sigma^2 :  \Delta^2 / 2 + o(\Delta^2) \leq \varepsilon_n^2 ,\Delta^2 \leq \varepsilon_n^2\} )$. The second inequality in the measure $\Delta^2 \leq \varepsilon_n^2 (\to 0)$ implies $\Delta^2 / 2 \leq \varepsilon_n^2/2$ and $o(\Delta^2) \leq \varepsilon_n^2/2$ with sufficiently large $n$. Thus, $\Pi_{\varsigma^2}(\mB_1) = \Pi_{\varsigma^2}(\{ \sigma^2 : \Delta \leq \varepsilon_n\})$ holds with such $n$.
Because the prior distribution of $\sigma^2$ is an inverse-Gaussian distribution, we consider the transform $\tilde{\sigma}=\sigma^{-2}$ and obtain
\begin{align}
     &\Pi_{\varsigma^2}(\Delta \leq \varepsilon_n)\\
     & = \Pi_{\varsigma^2} \left( \left\{\sigma^2: \left|{1}/{\sigma^2}-{1}/{(\sigma^* )^2}\right|\leq \frac{\varepsilon_n}{(\sigma^*)^2} \right\} \right) \notag  \\
    &\ge \int_{(\sigma^*)^{-2}-(\sigma^*)^{-2}\varepsilon_n}^{(\sigma^*)^{-2}+(\sigma^*)^{-2}\varepsilon_n} \tilde{\sigma}^{-1 / 2}e^{-\tilde{\sigma}}e^{-{1} / {\tilde{\sigma}}}\mathrm{d} \tilde{\sigma} \nonumber \\
    &\ge 2(\sigma^*)^{-2}\varepsilon_n \left ((\sigma^*)^{-2}+(\sigma^*)^{-2}\varepsilon_n\right)^{-1 / 2} \exp\left(-(\sigma^*)^{-2}-(\sigma^*)^{-2}\varepsilon_n\right) \exp\left(\frac{-1}{(\sigma^*)^{-2}-(\sigma^*)^{-2}\varepsilon_n}\right) \nonumber \\
    &\gtrsim \varepsilon_n. \label{ineq:bound3_2}
\end{align}
The first inequality follows from the transformation $\tilde{\sigma}=\sigma^{-2}$.
In the second inequality, $2({\sigma^*})^{-2}\varepsilon_n$ represents the integral interval length, and the other terms represent the minimum values of the integrands within the integral interval.

\textbf{Step (ii): Bound $\Pi_{\vartheta,\varsigma^2} (\mB_2\mid \mB_1, \mD_1)$}.
In the previous step, we showed that $\sigma$ is close to the constant $\sigma^*$ in $\Pi_{\varsigma^2}$ within the set $\mB_1$. To bound the probability $\Pi_{\vartheta,\varsigma^2}(\mB_2\mid\mB_1, \mD_1)$, it is sufficient to evaluate
\begin{equation}
    \sum_{k =L_{\kappa}}^{U_{\kappa}} \pi_{\kappa}(k)\Pi_{\vartheta} \left(\left\{\bm{\theta}: \| \bm{\theta}-\bm{\theta}^*\|_{\Sigma}^2 \geq \varepsilon_n^2 \right\} \mid \mD_1, k \right). \label{ineq:bound3_1}
\end{equation}
As we have $\|\bm{\theta}-\bm{\theta}^*\|_{\Sigma}^2 = (\bm{\theta}-\bm{\theta}^*)^{\top} (\Sigma-\hat{\Sigma}) (\bm{\theta}-\bm{\theta}^*) + (\bm{\theta}-\bm{\theta}^*)^{\top} \hat{\Sigma} (\bm{\theta}-\bm{\theta}^*) \leq (\bm{\theta}-\bm{\theta}^*)^{\top} (\lambda_1\rho_nI+\hat{\Sigma}) (\bm{\theta}-\bm{\theta}^*)$ from Assumption~\ref{ass:emp}, for any $k = L_{\kappa},\ldots,U_{\kappa}$ and $\bm{\theta}\in S_{\mD_1,k}$, it holds that
\begin{align}
     \|\bm{\theta}-\bm{\theta}^*\|_{\Sigma}^2 
     &\le (\bm{\beta}_{1:k}- \bm{\beta}_{1:k}^*)^{\top} \mathrm{diag}(\lambda_1\rho_n + \hat{\lambda}_1,\lambda_1\rho_n + \hat{\lambda}_2\ldots, \lambda_1\rho_n+ \hat{\lambda}_{k} )  (\bm{\beta}_{1:k}- \bm{\beta}_{1:k}^*) \\
     &\quad + \bm{\beta}_{k+1:p}^{*^\top} \mathrm{diag}(\lambda_1\rho_n + \hat{\lambda}_{k+1},\ldots, \lambda_1\rho_n+ \hat{\lambda}_{p} ) \bm{\beta}_{k+1:p}^*,
\end{align}
where $\bm{\beta}_{1:k}^*$ is the $k$-dimensional vector composed of the first $k$ elements of $\bm{\beta}^* = (\hat{\bm{v}}_1,\ldots,\hat{\bm{v}}_p)^{\top} \bm{\theta}^*$ and $\bm{\beta}_{k+1:p}^*$ is the remaining part.
Hence, we bound the probability in \eqref{ineq:bound3_1} as
\begin{equation}
    \Pi_{\vartheta} \left(\left\{\bm{\theta}: \| \bm{\theta}-\bm{\theta}^*\|_{\Sigma}^2 \geq \varepsilon_n^2 \right\} \mid \mD_1, k \right) 
    \le \Pi_{\mathfrak{b}} \left(\left\{\bm{\beta}_{1:k}: \left\| \bm{\beta}_{1:k} - \bm{\beta}_{1:k}^* \right\|_{\grave{\Sigma}_{1:k}}^2 +  \sum_{i=k+1}^p \tilde{\lambda}_i\beta_i^* \geq \varepsilon_n^2 \right\} \mid \mD_1, k \right), \label{ineq:bound3_4}
\end{equation}
where $\grave{\Sigma}_{1:k} := \mathrm{diag}(\tilde{\lambda}_1,\ldots,\tilde{\lambda}_k )$ and $\tilde{\lambda}_i = \lambda_1\rho_n + \hat{\lambda}_i$ for $i=1,2,\ldots,p$. Furthermore, it is bounded from above by
\begin{equation}
    \Pi_{\mathfrak{b}} \left(\left\{\bm{\beta}_{1:k}: \|\bm{\beta}_{1:k}-\bm{\beta}_{1:k}^* \|_{\grave{\Sigma}_{1:k}}^2 \geq \tilde{\varepsilon}_n^2 \right\}\mid \mD_1, k\right),\label{ineq:bound3_5}
\end{equation}
where $\tilde{\varepsilon}_n^2 = {\varepsilon}_n^2 - \sum_{i=k+1}^p (\lambda_i + 2\lambda_1\rho_n) \beta_i^{*2}$, which is less than ${\varepsilon}_n^2 - \sum_{i=k+1}^p \tilde{\lambda}_{i} \beta_i^{*2}$ from Lemma~\ref{lem:eigen-error}.

We develop an upper bound of \eqref{ineq:bound3_5}. With the normalizing constant of the truncated Gaussian distribution $C_{R,k,\hat{\lambda}_i}^{\mathrm{trnc}}= \left\{\int_{\mathbb{R}^{k}}  \exp\left(-\bm{\beta}_{1:k}^{\top} \tilde{\Sigma}_{1:k}^{-1} \bm{\beta}_{1:k} \right)\mone\{ \|\bm{\beta}_{1:k}\|_{\tilde{\Sigma}_{1:k}} \le R \} \mathrm{d} \bm{\beta}_{1:k} \right\}^{-1}$, we continue \eqref{ineq:bound3_5} as 
\begin{align}
    &\Pi_{\mathfrak{b}} \left(\left\{\bm{\beta}_{1:k}: \|\bm{\beta}_{1:k}-\bm{\beta}_{1:k}^* \|_{\grave{\Sigma}_{1:k}}^2 \geq \tilde{\varepsilon}_n^2 \right\}\mid \mD_1, k\right)\nonumber 
    \\
    &=C_{R,k,\hat{\lambda}_i}^{\mathrm{trnc}}\int_{\{\bm{\beta}_{1:k}:\|\grave{\Sigma}_{1:k}^{1/2}(\bm{\beta}_{1:k}^* - \bm{\beta}_{1:k})\|_{2}^2 \geq \tilde{\varepsilon}_n^2\}} \exp\left(-\bm{\beta}_{1:k}^{\top} \tilde{\Sigma}_{1:k}^{-1} \bm{\beta}_{1:k} \right)\mone\{ \|\bm{\beta}_{1:k}\|_{\tilde{\Sigma}_{1:k}} \le R \} \mathrm{d} \bm{\beta}_{1:k} ,\nonumber\\
    &=1-C_{R,k,\hat{\lambda}_i}^{\mathrm{trnc}}\int_{\{\bm{\beta}_{1:k}:\|\grave{\Sigma}_{1:k}^{{1/2}}(\bm{\beta}_{1:k}^* - \bm{u }_{1:k})\|_{2}^2 < \tilde{\varepsilon}_n^2\}} \exp\left(-\bm{\beta}_{1:k}^{\top} \tilde{\Sigma}_{1:k}^{-1} \bm{\beta}_{1:k} \right) \mathrm{d}\bm{\beta}_{1:k} \nonumber\\
    & = 1- \underbrace{\frac{C_{R,k,\hat{\lambda}_i}^{\mathrm{trnc}}}{C_{k,\hat{\lambda}_i}^{\mathrm{norm}}}}_{=:E_A} \underbrace{C_{k,\hat{\lambda}_i}^{\mathrm{norm}}\int_{\{\bm{\beta}_{1:k}:\|\grave{\Sigma}_{1:k}^{{1/2}}(\bm{\beta}_{1:k}^* - \bm{\beta}_{1:k})\|_{2}^2 < \tilde{\varepsilon}_n^2\}} \exp\left(-\bm{\beta}_{1:k}^{\top} \tilde{\Sigma}_{1:k}^{-1} \bm{\beta}_{1:k} \right) \mathrm{d}\bm{\beta}_{1:k}}_{=:E_B}, \label{eq:trnc_}
\end{align}
where $C_{k,\hat{\lambda}_i}^{\mathrm{norm}} := \left\{\int_{\mathbb{R}^k} \exp(-\bm{h}^{\top} \tilde{\Sigma}_{1:k}^{-1} \bm{h} ) \mathrm{d}\bm{h}\right\}^{-1}$. The second equality holds with probability at least $1-\ell_n$. Indeed, for any $\bm{\beta}_{1:k}\in \mathbb{R}^k$ such that
$\sum_{i=1}^k \tilde{\lambda}_i(\beta_i - \beta_i^*)^2 < \tilde{\varepsilon}_n^2$, we have $
\sum_{i=1}^k \hat{\lambda}_i \beta_i^2 \le 2\sum_{i=1}^k \hat{\lambda}_i \beta_i^{*2} + 2\sum_{i=1}^k \hat{\lambda}_i (\beta_i-\beta_i^*)^2
\le 2\|\bm{\theta}^*\|_{\hat{\Sigma}}^2 + 2\sum_{i=1}^k \tilde{\lambda}_i (\beta_i-\beta_i^*)^2
\le 2\|\bm{\theta}^*\|_{\Sigma}^2 + 2\lambda_1\rho_n\|\bm{\theta}^*\|_2^2 +  2\sum_{i=1}^k \tilde{\lambda}_i (\beta_i-\beta_i^*)^2
< \frac{R^2}{2} + o(1) < R^2$ with probability at least $1-\ell_n$ from Lemma~\ref{lem:eigen-error}. 

Regarding $E_A$, we obtain
\begin{align}
    \left(\frac{C_{R,k,\hat{\lambda}_i}^{\mathrm{trnc}}}{C_{k,\hat{\lambda}_i}^{\mathrm{norm}}} \right)^{-1}
    & = C_{k,\hat{\lambda}_i}^{\mathrm{norm}} \int_{ \|\bm{h}\|_{\tilde{\Sigma}_{1:k}} \le R}  \exp\left(-\bm{h}^{\top} \tilde{\Sigma}_{1:k}^{-1} \bm{h} \right) \mathrm{d} \bm{h} \\
    & = C_{k,\hat{\lambda}_i}^{\mathrm{norm}} \frac{1}{2^{k/2}} \left|\tilde{\Sigma}_{1:k} \right|^{1/2} \int_{ \|\bm{h}\|_{\tilde{\Sigma}_{1:k}} \le R}  \exp\left(-\frac{\|\bm{h}\|_2^2}{2} \right) \mathrm{d} \bm{h} \\   
    & = \frac{1}{(2\pi)^{k/2}} \int_{ \|\bm{h}\|_{\tilde{\Sigma}_{1:k}} \le R}  \exp\left(-\frac{\|\bm{h}\|_2^2}{2} \right) \mathrm{d} \bm{h} \\
    & \le 1
    \label{ineq:bound3_7}
\end{align}
Similarly, about $E_B$, it holds that
\begin{align*}
    E_B & = C_{k,\hat{\lambda}_i}^{\mathrm{norm}}\int_{\{\bm{\beta}_{1:k}:\|\grave{\Sigma}_{1:k}^{{1/2}}(\bm{\beta}_{1:k} - \bm{\beta}_{1:k}^*)\|_{2} < \tilde{\varepsilon}_n\}} \exp\left(-\bm{\beta}_{1:k}^{\top} \tilde{\Sigma}_{1:k}^{-1} \bm{\beta}_{1:k} \right) \mathrm{d}\bm{\beta}_{1:k} \\
    & = C_{k,\hat{\lambda}_i}^{\mathrm{norm}} \int_{\{\bm{h}:\|\grave{\Sigma}_{1:k}^{{1/2}}\bm{h}\|_{2} < \tilde{\varepsilon}_n\}} \exp\left(- (\bm{h}+\bm{\beta}_{1:k}^*)^{\top} \tilde{\Sigma}_{1:k}^{-1} (\bm{h} +\bm{\beta}_{1:k}^*) \right) \mathrm{d}\bm{h} \\
    & \geq C_{k,\hat{\lambda}_i}^{\mathrm{norm}} \frac{1}{2^{k/2}} |\tilde{\Sigma}_{1:k} |^{1/2}  \exp\left(-2\bm{\beta}_{1:k}^{*\top} \tilde{\Sigma}_{1:k}^{-1} \bm{\beta}_{1:k}^* \right)
    \int_{\{\bm{h}:\|\grave{\Sigma}_{1:k}^{{1/2}} \tilde{\Sigma}_{1:k}^{1/2} \bm{h}\|_{2} < \sqrt{2} \tilde{\varepsilon}_n\}} \exp\left(-\frac{\|\bm{h}\|_2^2 }{2} \right) \mathrm{d}\bm{h} \\
    & = \frac{1}{(2\pi)^{k/2}}  \exp\left(-2\bm{\beta}_{1:k}^{*\top} \tilde{\Sigma}_{1:k}^{-1} \bm{\beta}_{1:k}^* \right)
    \int_{\{\bm{h}:\|\grave{\Sigma}_{1:k}^{{1/2}} \tilde{\Sigma}_{1:k}^{1/2} \bm{h}\|_{2} < \sqrt{2} \tilde{\varepsilon}_n\}} \exp\left(-\frac{\|\bm{h}\|_2^2}{2} \right) \mathrm{d}\bm{h}
\end{align*}
The above inequality follows from the relation: $(\bm{a}+\bm{b})^{\top} A (\bm{a}+\bm{b}) \leq 2\bm{a}^{\top} A \bm{a} + 2\bm{b}^{\top} A \bm{b}$ for any semi-positive definite matrix $A$.
Using 
 \begin{equation*}
     \left\{ \bm{h} : \left\|\tilde{\Sigma}_{1:k}^{1/2}\grave{\Sigma}_{1:k}^{{1/2}} \bm{h} \right\|_2 < \sqrt{2}\tilde{\varepsilon}_n \right\} \supset 
     \left\{ \bm{h} : \left\| \sqrt{\hat{\lambda}_1 \tilde{\lambda}_1} \bm{h} \right\|_2 < \sqrt{2} \tilde{\varepsilon}_n \right\},
 \end{equation*}
we continue the above as follows:
\begin{align}
    E_B & \geq 
    \frac{1}{(2\pi)^{k/2}}  \exp\left(-2\bm{\beta}_{1:k}^{*\top} \tilde{\Sigma}_{1:k}^{-1} \bm{\beta}_{1:k}^* \right)
    \int_{\{\bm{h}:\|\grave{\Sigma}_{1:k}^{{1/2}} \tilde{\Sigma}_{1:k}^{1/2} \bm{h}\|_{2} < \sqrt{2} \tilde{\varepsilon}_n\}} \exp\left(-\frac{\|\bm{h}\|_2^2}{2} \right) \mathrm{d}\bm{h} \\
    & \geq \frac{1}{(2\pi)^{k/2}} \exp\left(-2\bm{\beta}_{1:k}^{*\top} \tilde{\Sigma}_{1:k}^{-1} \bm{\beta}_{1:k}^* \right)
    \int_{\{\bm{h}:\| \sqrt{\hat{\lambda}_1 \tilde{\lambda}_1} \bm{h}\|_{2} < \sqrt{2} \tilde{\varepsilon}_n\}} \exp\left(-\frac{\|\bm{h}\|_2^2}{2} \right) \mathrm{d}\bm{h} \\
    & \geq \frac{1}{(2\pi)^{k/2}}  \exp\left(-2\bm{\beta}_{1:k}^{*\top} \tilde{\Sigma}_{1:k}^{-1} \bm{\beta}_{1:k}^* \right)  \exp\left(- \frac{\tilde{\varepsilon}_n^2}{\hat{\lambda}_1 \tilde{\lambda}_1}\right) \frac{\pi^{k/2}}{\Gamma(\frac{k}{2}+1)} \left(\frac{\sqrt{2}\tilde{\varepsilon}_n}{\sqrt{\hat{\lambda}_1 \tilde{\lambda}_1}}\right)^k \\
    & = \exp\left(-2\bm{\beta}_{1:k}^{*\top} \tilde{\Sigma}_{1:k}^{-1} \bm{\beta}_{1:k}^* \right)  \exp\left(- \frac{\tilde{\varepsilon}_n^2}{\hat{\lambda}_1 \tilde{\lambda}_1}\right) \frac{1}{\Gamma(\frac{k}{2}+1)} \left(\frac{\tilde{\varepsilon}_n}{\sqrt{\hat{\lambda}_1 \tilde{\lambda}_1}}\right)^k.
\end{align}
The last inequality follows the fact that the probability of a ball is bounded from below by the product of the minimum probability density within the ball and its volume. From Stirling's formula and $\sqrt{2\pi} \le k^{\frac12}$ for large $k$, $\log\Gamma(\frac{k}{2}+1)<  k\log k$. Also, $\hat{\lambda}_1 \le \sqrt{ \hat{\lambda}_1 \tilde{\lambda}_1} \le \hat{\lambda}_1 + {\lambda}_1$.
Hence, we have
\begin{align}
    E_B & \geq \exp\left\{ -2\bm{\beta}_{1:k}^{*\top} \tilde{\Sigma}_{1:k}^{-1} \bm{\beta}_{1:k}^* - \frac{\tilde{\varepsilon}_n^2}{\hat{\lambda}_1^2} - k\log k - k \log \left(\frac{\hat{\lambda}_1 + {\lambda}_1}{\tilde{\varepsilon}_n}\right) \right \}.
    \label{ineq:bound3_6}
\end{align}

Using \eqref{ineq:bound3_7} and \eqref{ineq:bound3_6}, we continue \eqref{eq:trnc_} as
\begin{align}
    &\Pi_{\mathfrak{b}} \left( \{\bm{\beta}_{1:k} : \|\bm{\beta}_{1:k} - \bm{\beta}_{1:k}^*\|_{\grave{\Sigma}_{1:k}}^2 \geq \tilde{\varepsilon}_n^2 \} \mid \mD_1, k\right)\\
    & \leq 1 - \exp\left\{ -2\bm{\beta}_{1:k}^{*\top} \tilde{\Sigma}_{1:k}^{-1} \bm{\beta}_{1:k}^* - \frac{\tilde{\varepsilon}_n^2}{\hat{\lambda}_1^2} - k\log k - k \log \left(\frac{\hat{\lambda}_1 + {\lambda}_1}{\tilde{\varepsilon}_n}\right) \right \} .
\end{align}
Then, we obtain 
\begin{align}
    \Pi_{\vartheta,\varsigma^2} \left(\mB_2\mid \mB_1, \mD_1\right) 
    \geq \sum_{k =L_{\kappa}}^{U_{\kappa}} 
    \pi_{\kappa}(k) \exp\left\{ -2\bm{\beta}_{1:k}^{*\top} \tilde{\Sigma}_{1:k}^{-1} \bm{\beta}_{1:k}^* - \frac{\tilde{\varepsilon}_n^2}{\hat{\lambda}_1^2} - k\log k - k \log \left(\frac{\hat{\lambda}_1 + {\lambda}_1}{\tilde{\varepsilon}_n}\right) \right \}.  \label{ineq:bound3_3}
\end{align}

\textbf{Step (iii): Combining the result}.
Finally, combining the results in \eqref{ineq:bound3_2} and \eqref{ineq:bound3_3} yields
\begin{align}
    &\Pi_{\vartheta,\varsigma^2}  \left(\left\{(\bm{\theta},\sigma^2): \KL(P^*, P_{\bm{\theta}, \sigma^2}) \leq {\varepsilon}_n^2, 
	\KV(P^*, P_{\bm{\theta}, \sigma^2}) \leq {\varepsilon}_n^2 \right\} \mid \mD_1 \right)\\
    &\gtrsim \varepsilon_n \sum_{k =L_{\kappa}}^{U_{\kappa}} 
    \pi_{\kappa}(k) \exp\left\{ -2\bm{\beta}_{1:k}^{*\top} \tilde{\Sigma}_{1:k}^{-1} \bm{\beta}_{1:k}^* - \frac{\tilde{\varepsilon}_n^2}{\hat{\lambda}_1^2} - k\log k - k \log \left(\frac{\hat{\lambda}_1 + {\lambda}_1}{\tilde{\varepsilon}_n}\right) \right \} \\
    & = \sum_{k =L_{\kappa}}^{U_{\kappa}} 
    \exp \left[ - \log \frac{1}{\varepsilon_n \pi_{\kappa}(k)} - 2\bm{\beta}_{1:k}^{*\top} \tilde{\Sigma}_{1:k}^{-1} \bm{\beta}_{1:k}^* - \frac{\tilde{\varepsilon}_n^2}{\hat{\lambda}_1^2} - k\log k - k \log \left(\frac{\hat{\lambda}_1 + {\lambda}_1}{\tilde{\varepsilon}_n}\right) \right]
\end{align}
Then, we have
\begin{align*}
    &\frac{-1}{n} \log \Pi_{\vartheta,\varsigma^2} \left(\left\{\bm{\theta},\sigma^2: \KL(P^*, P_{\bm{\theta}, \sigma^2}) \leq \varepsilon_n^2, 
	\KV(P^*,P_{\bm{\theta}, \sigma^2}) \leq \varepsilon_n^2 \right\}\mid\mD_1\right) \\ 
    & \lesssim \frac{-1}{n} \log \sum_{k =L_{\kappa}}^{U_{\kappa}} 
    \exp \left\{ - \log \frac{1}{\varepsilon_n \pi_{\kappa}(k)} - 2\bm{\beta}_{1:k}^{*\top} \tilde{\Sigma}_{1:k}^{-1} \bm{\beta}_{1:k}^* - \frac{\tilde{\varepsilon}_n^2}{\hat{\lambda}_1^2} - k\log k - k \log \left(\frac{\hat{\lambda}_1 + {\lambda}_1}{\tilde{\varepsilon}_n}\right) \right\}  \\
    & \leq \frac{1}{n} \left\{ \log \frac{1}{\varepsilon_n \pi_{\kappa}(L_{\kappa})} 
    + 2\bm{\beta}_{1:L_{\kappa}}^{*\top} \tilde{\Sigma}_{1:L_{\kappa}}^{-1} \bm{\beta}_{1:L_{\kappa}}^* 
    + \frac{\tilde{\varepsilon}_n^2}{\hat{\lambda}_1^2} 
    + L_{\kappa}\log L_{\kappa} + L_{\kappa} \log \left(\frac{\hat{\lambda}_1 + {\lambda}_1}{\tilde{\varepsilon}_n}\right) \right\} .
\end{align*}
The rate-determining terms are 
\begin{equation} \label{eq:rate-determine}
    \frac{1}{n}\log\frac{1}{\varepsilon_n\pi_{\kappa}(L_{\kappa})} + \frac1n\sum_{i=1}^{L_{\kappa}} \frac{\beta_i^{*2}}{\hat{\lambda}_i} + \frac{L_{\kappa}}{n} \log \left(L_{\kappa}\frac{{\hat{\lambda}_1 + {\lambda}_1}}{\tilde{\varepsilon}_n}\right) ,
\end{equation}
which aligns with the main statement from Lemma~\ref{lem:eigen-error}.

\end{proof}

\section{Proofs of Examples~\ref{example: polynomial_decay1} and \ref{example: polynomial_decay2}} 

We remark that $a_n \approx b_n$ means $a_n \gtrsim b_n$ and $a_n \lesssim b_n$.
\begin{proof}[of Example~\ref{example: polynomial_decay1}]
    Remark that $\sum_{i> L_{\kappa}} \lambda_i \approx \frac{L_{\kappa}^{1-\alpha}}{\alpha-1}$ and $\sum_{i> L_{\kappa}} \lambda_i^2 \approx \frac{L_{\kappa}^{1-2\alpha}}{2\alpha-1}$. $\bar{V}_n$ is $O(n^{-\frac{\alpha}{\alpha+1}})$ because $\frac{n \sum_{i>L_{\kappa}} \lambda_{i}^2 }{(\lambda + \sum_{i>L_{\kappa}} \lambda_{i})^2}\approx n L_{\kappa}^{1-2\alpha} \lambda^{-2} = n^{\frac{4(\alpha+1) + 2(1-2\alpha) -2(2\alpha+3) }{4(\alpha+1)}} = n^{\frac{-\alpha}{\alpha+1}} = \frac{U_{\kappa}}{n}$ and the first part is negligible. Regarding $\bar{B}_n$, $\frac1n\sum_{i=1}^{L_{\kappa}} \frac{\beta_i^{*2}}{\hat{\lambda}_i} \le \frac{L_{\kappa}^{\alpha+1}}{n} \approx n^{-1/2}$ and the other terms are evaluated as $O( n^{-\frac{2\alpha+1}{2(\alpha+1)}}\log n)$. If $\|\bm{\beta}_{1:L_{\kappa}}\|_2$ is $O(1)$, $\frac1n\sum_{i=1}^{L_{\kappa}} \frac{\beta_i^{*2}}{\hat{\lambda}_i} = \frac{\hat{\lambda}_{L_{\kappa}}^{-1}}
    {n} \|\bm{\beta}_{1:L_{\kappa}}^{*}\|_2^2 \approx n^{\frac{-\alpha-2}{2(\alpha+1)}}$.
\end{proof}
\begin{proof}[of Example~\ref{example: polynomial_decay2}]
We analyse the behaviour of the quantities $\bar{V}_n$ and $\bar{B}_n$ by partitioning the parameter $\alpha$ into distinct cases.
\begin{itemize}
    \item Case $\alpha=1$. $\sum_{i> L_{\kappa}} \lambda_i \approx \log\frac{p}{L_{\kappa}}$ and $\sum_{i> L_{\kappa}} \lambda_i^2 \approx n^{-1/4}$. $\bar{V}_n$ is $O(n^{-\frac{\alpha}{\alpha+1}})$, because $\frac{n \sum_{i>L_{\kappa}} \lambda_{i}^2 }{(\lambda + \sum_{i>L_{\kappa}} \lambda_{i})^2}\approx n n^{-1/4} n^{-5/4} = n^{-\frac{1}{2}} = \frac{U_{\kappa}}{n}$ and the first part is negligible. Regarding $\bar{B}_n$, $\frac1n\sum_{i=1}^{L_{\kappa}} \frac{\beta_i^{*2}}{\hat{\lambda}_i} \le \frac{L_{\kappa}^{\alpha+1}}{n} \approx n^{-1/2}$ and the other terms are evaluated as $O(n^{-3/4})$.
    \item Case $\frac12<\alpha<1$. $\sum_{i> L_{\kappa}} \lambda_i \approx p^{1-\alpha}$ and $\sum_{i> L_{\kappa}} \lambda_i^2 \approx L_{\kappa}^{1-2\alpha}$. $\bar{V}_n$ is $O(n^{-\frac{\alpha}{\alpha+1}})$, because $\frac{n \sum_{i>L_{\kappa}} \lambda_{i}^2 }{(\lambda + \sum_{i>L_{\kappa}} \lambda_{i})^2}\approx n L_{\kappa}^{1-2\alpha} p^{-2(1-\alpha)} = n^{ \frac{3}{2(\alpha+1)}} p^{-2(1-\alpha)}$, which is bounded by $n^{-\frac{\alpha}{\alpha+1}}$ if $p > n^{\frac{2\alpha+3}{4(1-\alpha^2)}}$.
    Regarding $\bar{B}_n$, $\frac1n\sum_{i=1}^{L_{\kappa}} \frac{\beta_i^{*2}}{\hat{\lambda}_i} \le \frac{L_{\kappa}^{\alpha+1}}{n} \approx n^{-1/2}$ and the other two terms are bounded from above by $n^{-\frac{2\alpha+1}{2(\alpha+1)}} \log n$.
    \item Case $\alpha=\frac12$. $\sum_{i> L_{\kappa}} \lambda_i \approx 2p^{\frac12}$ and $\sum_{i> L_{\kappa}} \lambda_i^2 \approx \log\frac{p}{L_{\kappa}}$. About $\bar{V}_n$, because $\frac{n \sum_{i>L_{\kappa}} \lambda_{i}^2 }{(\lambda + \sum_{i>L_{\kappa}} \lambda_{i})^2}\lesssim \frac{n \log p}{n^{\frac{4}{3}} + p}$, $\bar{V}_n$ is bounded by $n^{-\frac{1}{3}}$ if $p > n^{\frac{4}{3}}$. Inequality $\frac1n\sum_{i=1}^{L_{\kappa}} \frac{\beta_i^{*2}}{\hat{\lambda}_i} \le n^{-\frac{1}{2}} \le n^{-\frac{1}{3}}$ implies the bound for $\bar{B}_n$.
    \item Case $0<\alpha<\frac12$. $\sum_{i> L_{\kappa}} \lambda_i \approx \frac{p^{1-\alpha}}{1-\alpha}$ and $\sum_{i> L_{\kappa}} \lambda_i^2 \approx \frac{p^{1-2\alpha}}{1-2\alpha}$. Regarding $\bar{V}_n$, because $\frac{n \sum_{i>L_{\kappa}} \lambda_{i}^2 }{(\lambda + \sum_{i>L_{\kappa}} \lambda_{i})^2}\approx \frac{n}{p}$, which is bounded from above by $n^{-\frac{\alpha}{\alpha+1}}$ if $p > n^{\frac{2\alpha+1}{\alpha+1}}$. Concerning $\bar{B}_n$, $\frac1n\sum_{i=1}^{L_{\kappa}} \frac{\beta_i^{*2}}{\hat{\lambda}_i} \lesssim n^{-1/2} < n^{-\frac{\alpha}{\alpha+1}}$.
\end{itemize}
\end{proof}

\section{Proof of Theorem \ref{thm:disapp}}  \label{app:disapp}

\begin{proof}[of Theorem \ref{thm:disapp}]
We commence by considering the event specified in Assumption~\ref{ass:emp2}. Given the event, the subsequent results hold with probability at least $1-2\ell_n$ for every $n\in\N$, which implies the convergence in probability. Furthermore, if we assume $\sum_{n\ge 1}\ell_n <\infty$, the Borel--Cantelli lemma allows us to claim almost sure convergence.

Regarding the approximation distribution, the density function of $\bm{\beta}_{1:L_{\kappa}} = T_{\mD_1,L_{\kappa}}^{-1}(\bm{\theta})$ (for $\bm{\theta}\in S_{\mD_1,L_\kappa}$) is
\begin{align}
    &\pi^{\infty}_{\mathfrak{b}}(\bm{\beta}_{1:L_{\kappa}}\mid \mD,\sigma^2)\\
    &= \frac{\exp\big\{- (\bm{\beta}_{1:L_{\kappa}} - \bm{m}_{\sigma^2})^\top \Lambda_{\sigma^2} (\bm{\beta}_{1:L_{\kappa}} - \bm{m}_{\sigma^2}) \big\}\mone\left\{    \|\bm{\beta}_{1:L_{\kappa}}\|_{\tilde{\Sigma}_{1:L_{\kappa}}}\le R 
  \right\} }{ \int_{\mathbb{R}^{L_{\kappa}} } \exp\big\{- (\bm{\beta}_{1:L_{\kappa}}'- \bm{m}_{\sigma^2})^\top \Lambda_{\sigma^2} (\bm{\beta}_{1:L_{\kappa}}'- \bm{m}_{\sigma^2})\big\} \mone\left\{    \|\bm{\beta}_{1:L_{\kappa}}'\|_{\tilde{\Sigma}_{1:L_{\kappa}}}\le R 
  \right\}\mathrm{d}\bm{\beta}_{1:L_{\kappa}}'  } \\
  & =: \frac{g_n^\infty(\bm{\beta}_{1:L_{\kappa}},\sigma^2) }{\int_{\mathbb{R}^{L_{\kappa}} } g_n^\infty(\bm{\beta}_{1:L_{\kappa}}',\sigma^2)\mathrm{d} \bm{\beta}_{1:L_{\kappa}}' }, 
\end{align}
where $g_n^\infty(\bm{\beta}_{1:L_{\kappa}},\sigma^2) = \exp\big\{- (\bm{\beta}_{1:L_{\kappa}} - \bm{m}_{\sigma^2})^\top \Lambda_{\sigma^2} (\bm{\beta}_{1:L_{\kappa}} - \bm{m}_{\sigma^2}) \big\}\mone\{\|\bm{\beta}_{1:k}\|_{\tilde{\Sigma}_{1:k}}\le R\}.$
Furthermore, the posterior density of $\bm{\beta}_{1:k}$ given $\mD$, $\sigma^2$, and $k$ is
\begin{align}
    &\pi(\bm{\beta}_{1:k} \mid \mD,\sigma^2, k) \\
    & = \frac{\exp\left\{-\frac{1}{2\sigma^2}\|X_2\hat{V}_{1:k} \bm{\beta}_{1:k} -Y_2\|_2^2 - \bm{\beta}_{1:k}^\top \tilde{\Sigma}_{1:k}^{-1} \bm{\beta}_{1:k} \right\}\mone\{\|\bm{\beta}_{1:k}\|_{\tilde{\Sigma}_{1:k}}\le R\} }{\int_{\mathbb{R}^k} \exp\left\{-\frac{1}{2\sigma^2}\|X_2\hat{V}_{1:k} \bm{\beta}_{1:k} '-Y_2\|_2^2- \bm{\beta}_{1:k}'^\top \tilde{\Sigma}_{1:k}^{-1} \bm{\beta}_{1:k}'  \right\}\mone\{  \|\bm{\beta}_{1:k}'\|_{\tilde{\Sigma}_{1:k}}\le R\} \mathrm{d} \bm{\beta}_{1:k} '  },\\
    & = \frac{\exp\left\{- ( \bm{\beta}_{1:k} -\bm{\mu}_{\sigma^2,n,k})^\top Q_{\sigma^2,n,k} (\bm{\beta}_{1:k} -\bm{\mu}_{\sigma^2,n,k})  \right\} \mone\{  \|\bm{\beta}_{1:k}\|_{\tilde{\Sigma}_{1:k}}\le R\} }{ \int_{\mathbb{R}^k}\exp\left\{- (\bm{\beta}_{1:k} '-\bm{\mu}_{\sigma^2,n,k})^\top Q_{\sigma^2,n,k} (\bm{\beta}_{1:k} '-\bm{\mu}_{\sigma^2,n,k})\right\} \mone\{  \|\bm{\beta}_{1:k}'\|_{\tilde{\Sigma}_{1:k}}\le R\} \mathrm{d}\bm{\beta}_{1:k}'   }    \\
    &=  \frac{g_n(\bm{\beta}_{1:k},\sigma^2,k) }{ \int_{\mathbb{R}^k}g_n( \bm{\beta}_{1:k}',k,\sigma^2)\mathrm{d} \bm{\beta}_{1:k} '}
\end{align}
where we define $Q_{\sigma^2,n,k} := \tilde{\Sigma}_{1:k}^{-1} + \hat{V}_{1:k}^{\top}X_2^{\top}X_2\hat{V}_{1:k}/2\sigma^2$, $\bm{\mu}_{\sigma^2,n,k} := Q_{\sigma^2,n,k}^{-1}\hat{V}_{1:L_{\kappa}}^{\top}X_2^{\top}X_2\bar{\bm{\theta}}_2/2\sigma^2$
and
$g_n(\bm{\beta}_{1:k},\sigma^2,k) := \exp\left\{- ( \bm{\beta}_{1:k} -\bm{\mu}_{\sigma^2,n,k})^\top Q_{\sigma^2,n,k} (\bm{\beta}_{1:k} -\bm{\mu}_{\sigma^2,n,k})  \right\} \mone\{  \|\bm{\beta}_{1:k}\|_{\tilde{\Sigma}_{1:k}}\le R\}$. 

Next, we evaluate the discrepancy between $\Pi(\cdot\mid \mD,\sigma^2)$ and $\Pi^{\infty}(\cdot\mid \mD,\sigma^2)$ as follows:
    \begin{align}
        &\left\|\Pi(\cdot\mid \mD,\sigma^2) - \Pi^{\infty}(\cdot\mid \mD,\sigma^2)\right\|_{\mathrm{TV}} \\
        & = \sup_{B\in \mB(\R^p)} \left|\Pi(B\mid \mD,\sigma^2) - \Pi^{\infty}(B\mid \mD,\sigma^2)\right| \\
        & = \sup_{B\in \mB(\R^p)} \left| \sum_{k=L_{\kappa}}^{U_{\kappa}} \Pi(B,k \mid \mD,\sigma^2) - \Pi^{\infty}(B \mid \mD,\sigma^2)\right| \\
        & = \sup_{B\in \mB(\R^p)} \left| \sum_{k=L_{\kappa}}^{U_{\kappa}} \pi_{\kappa}(k)\Pi(B \mid \mD,\sigma^2, k) - \Pi^{\infty}(B \mid \mD,\sigma^2)\right| \\
        & = \sup_{B\in \mB(\R^p)} \left(\left| \pi_{\kappa}(L_{\kappa})\Pi(B \mid \mD,\sigma^2, L_{\kappa}) - \Pi^{\infty}(B \mid \mD,\sigma^2)\right| + \sum_{k=L_{\kappa}+1}^{U_{\kappa}} \pi_{\kappa}(k)\Pi(B \mid \mD,\sigma^2, k)  \right)\\
        & \le \sup_{B\in \mB(\R^p)} \left| \pi_{\kappa}(L_{\kappa})\Pi(B \mid \mD,\sigma^2, L_{\kappa}) - \Pi^{\infty}(B \mid \mD,\sigma^2)\right| + \sup_{B\in \mB(\R^p)} \sum_{k=L_{\kappa}+1}^{U_{\kappa}} \pi_{\kappa}(k)\Pi(B \mid \mD,\sigma^2, k) \\
        & = \left\| \pi_{\kappa}(L_{\kappa}) \Pi(\cdot\mid \mD,\sigma^2, L_{\kappa}) - \Pi^{\infty}(\cdot\mid \mD,\sigma^2)\right\|_{\mathrm{TV}} 
        + \left\| \sum_{k=L_{\kappa}+1}^{U_{\kappa}} \pi_{\kappa}(k)\Pi(\cdot \mid \mD,\sigma^2, k)\right\|_{\mathrm{TV}} \label{eq:TVs}
    \end{align}
    
    Regarding the first term of \eqref{eq:TVs}, note that the distributions $\Pi(\cdot\mid \mD,\sigma^2, L_{\kappa})$ and $\Pi^{\infty}(\cdot\mid \mD,\sigma^2)$ share the same support; that is, they are mutually absolutely continuous. Then, with set $R_{\mD_1,L_{\kappa}}=\{\bm{\beta}_{1:L_{\kappa}}\in \R^{L_{\kappa}}:\|\bm{\beta}_{1:L_{\kappa}}\|_{\tilde{\Sigma}_{1:L_{\kappa}}} \le R \}$, we obtain 
    \begin{align}
        &\left\| \pi_{\kappa}(L_{\kappa}) \Pi(\cdot\mid \mD,\sigma^2, L_{\kappa}) - \Pi^{\infty}(\cdot\mid \mD,\sigma^2)\right\|_{\mathrm{TV}} \\
        & = \frac12  \int_{\R^{L_{\kappa}}} |\pi_{\kappa}(L_{\kappa}) \pi(\bm{\beta}_{1:L_{\kappa}}\mid \mD,\sigma^2, L_{\kappa}) - \pi^{\infty}(\bm{\beta}_{1:L_{\kappa}}\mid \mD,\sigma^2)| \mathrm{d}\bm{\beta}_{1:L_{\kappa}} \\
        & = \frac12 \int_{\R^{L_{\kappa}}} \left| \frac{\pi_{\kappa}(L_{\kappa})g_n(\bm{\beta}_{1:k},\sigma^2,L_\kappa)}{\int_{\R^{L_{\kappa}}} g_n(\bm{\beta}_{1:L_{\kappa}}',L_{\kappa},\sigma^2)\mathrm{d} \bm{\beta}_{1:L_{\kappa}}' } -\frac{g_n^\infty(\bm{\beta}_{1:L_{\kappa}},\sigma^2) }{\int_{\R^{L_{\kappa}}} g_n^\infty(\bm{\beta}_{1:L_{\kappa}}',\sigma^2)\mathrm{d} \bm{\beta}_{1:L_{\kappa}}' } \right| \mathrm{d}\bm{\beta}_{1:L_{\kappa}} \\
        & \le \frac{\int_{\R^{L_{\kappa}}}\left| \pi_{\kappa}(L_{\kappa})g_n(\bm{\beta}_{1:k},\sigma^2,L_\kappa)-g_n^\infty(\bm{\beta}_{1:L_{\kappa}},\sigma^2)\right|\mathrm{d}\bm{\beta}_{1:L_{\kappa}} }{\int_{\R^{L_{\kappa}}} g_n(\bm{\beta}_{1:L_{\kappa}}',L_{\kappa},\sigma^2)\mathrm{d} \bm{\beta}_{1:L_{\kappa}}' }  + \left(1-\pi_{\kappa}(L_{\kappa})\right) \\
        & \le  \sup_{\bm{\beta}_{1:L_{\kappa}} \in R_{\mD_1,L_{\kappa}} } \left| \pi_{\kappa}(L_{\kappa})  - \frac{g_n^\infty(\bm{\beta}_{1:L_{\kappa}},\sigma^2) }{g_n(\bm{\beta}_{1:k},\sigma^2,L_\kappa)}\right| + \left(1-\pi_{\kappa}(L_{\kappa})\right) ,
    \end{align}
    where the first inequality follows from Lemma~\ref{lem:integral0} and the second from Hölder’s inequality. To compare the functions $g_n^\infty$ and $g_n$ over $R_{\mD_1,L_{\kappa}}$, observe that
    \begin{align*}
        &-\log\left\{\frac{g_n^\infty(\bm{\beta}_{1:L_{\kappa}},\sigma^2)}{g_n(\bm{\beta}_{1:k},\sigma^2,L_\kappa)} \right\}\\
        &= (\bm{\beta}_{1:L_{\kappa}} - \bm{m}_{\sigma^2})^\top \Lambda_{\sigma^2} (\bm{\beta}_{1:L_{\kappa}} - \bm{m}_{\sigma^2})  - 
        ( \bm{\beta}_{1:L_{\kappa}} -\bm{\mu}_{\sigma^2,n,L_{\kappa}})^\top Q_{\sigma^2,n,L_{\kappa}} (\bm{\beta}_{1:L_{\kappa}} -\bm{\mu}_{\sigma^2,n,L_{\kappa}})\\
        &= \bm{m}_{\sigma^2}^\top \Lambda_{\sigma^2} \bm{m}_{\sigma^2}
        - 2\bm{m}_{\sigma^2}^\top \Lambda_{\sigma^2}\bm{\beta}_{1:L_{\kappa}} 
        -\bm{\mu}_{\sigma^2,n,L_{\kappa}}^\top Q_{\sigma^2,n,L_{\kappa}} \bm{\mu}_{\sigma^2,n,L_{\kappa}} 
        + 2 \bm{\mu}_{\sigma^2,n,L_{\kappa}}^\top Q_{\sigma^2,n,L_{\kappa}}\bm{\beta}_{1:L_{\kappa}}.
    \end{align*}
    Employing an argument based on the Neumann series and Lemma~\ref{lem:eigen-error}, one can show that these terms are of order $o(1)$ under Assumption~\ref{ass:emp2}. More precisely, because we obtain
    \begin{align*}
        &\left(\tilde{\Sigma}_{1:L_{\kappa}}^{-1} +  \frac{\hat{V}_{1:L_{\kappa}}^{\top}X_2^{\top}X_2\hat{V}_{1:L_{\kappa}}}{2\sigma^2}\right)^{-1}\frac{\hat{V}_{1:L_{\kappa}}^{\top}X_2^{\top}X_2\bar{\bm{\theta}}_2}{2\sigma^2} \\
        &= \left(\tilde{\Sigma}_{1:L_{\kappa}}^{-1} +  \frac{\hat{V}_{1:L_{\kappa}}^{\top}X_2^{\top}X_2\hat{V}_{1:L_{\kappa}}}{2\sigma^2}\right)^{-1}  \left(\frac{\hat{V}_{1:L_{\kappa}}^{\top}X_2^{\top}X_2\hat{V}_{1:L_{\kappa}}}{2\sigma^2}\right) 
        \left(\hat{V}_{1:L_{\kappa}}^{\top}X_2^{\top}X_2\hat{V}_{1:L_{\kappa}}\right)^{-1}
        \hat{V}_{1:L_{\kappa}}^{\top}X_2^{\top}X_2\bar{\bm{\theta}}_2\\
        &= \left\{ \left(\frac{\hat{V}_{1:L_{\kappa}}^{\top}X_2^{\top}X_2\hat{V}_{1:L_{\kappa}}}{2\sigma^2}\right)^{-1}  \tilde{\Sigma}_{1:L_{\kappa}}^{-1} + I \right\}^{-1} 
        \left(\hat{V}_{1:L_{\kappa}}^{\top}X_2^{\top}X_2\hat{V}_{1:L_{\kappa}}\right)^{-1}
        \hat{V}_{1:L_{\kappa}}^{\top}X_2^{\top}X_2\bar{\bm{\theta}}_2\\
        &= \left\{I - \left(\frac{\hat{V}_{1:L_{\kappa}}^{\top}X_2^{\top}X_2\hat{V}_{1:L_{\kappa}}}{2\sigma^2}\right)^{-1}  \tilde{\Sigma}_{1:L_{\kappa}}^{-1} + A_n \right\}
        \left(\hat{V}_{1:L_{\kappa}}^{\top}X_2^{\top}X_2\hat{V}_{1:L_{\kappa}}\right)^{-1}
        \hat{V}_{1:L_{\kappa}}^{\top}X_2^{\top}X_2\bar{\bm{\theta}}_2,
    \end{align*}
    with $\|A_n\|_{\mathrm{op}}=O\left(n^{-2} \lambda_{L_{\kappa}}^{-4} \right)$ as derived from the Neumann series, Assumption~\ref{ass:emp2}, Lemmas~\ref{lem:eigen-error}~and~\ref{lem:lambdaL}, it follows that
    \begin{align*}
        &\bm{m}_{\sigma^2}^\top \Lambda_{\sigma^2}\bm{\beta}_{1:L_{\kappa}} -  \bm{\mu}_{\sigma^2,n,L_{\kappa}}^\top Q_{\sigma^2,n,L_{\kappa}}\bm{\beta}_{1:L_{\kappa}}  \\
        &= \left[\left\{ I - \left(\frac{ \hat{V}_{1:L_{\kappa}}^{\top}X_2^{\top}X_2\hat{V}_{1:L_{\kappa}}}{2\sigma^2 }\right)^{-1} \tilde{\Sigma}_{1:L_{\kappa}}^{-1} 
        \right\} \left( \hat{V}_{1:L_{\kappa}}^{\top}X_2^{\top}X_2\hat{V}_{1:L_{\kappa}}\right)^{-1}\hat{V}_{1:L_{\kappa}}^{\top}X_2^{\top}X_2\bar{\bm{\theta}}_2\right]^{\top}\left(\tilde{\Sigma}_{1:L_{\kappa}}^{-1} + \frac{ \hat{V}_{1:L_{\kappa}}^{\top}X_2^{\top}X_2\hat{V}_{1:L_{\kappa}}}{2\sigma^2 }\right)\bm{\beta}_{1:L_{\kappa}} \\
        &\quad - \left\{\left(\tilde{\Sigma}_{1:L_{\kappa}}^{-1} +  \frac{\hat{V}_{1:L_{\kappa}}^{\top}X_2^{\top}X_2\hat{V}_{1:L_{\kappa}}}{2\sigma^2}\right)^{-1}\frac{\hat{V}_{1:L_{\kappa}}^{\top}X_2^{\top}X_2\bar{\bm{\theta}}_2}{2\sigma^2}\right\}^{\top} \left(\tilde{\Sigma}_{1:L_{\kappa}}^{-1} +  \frac{\hat{V}_{1:L_{\kappa}}^{\top}X_2^{\top}X_2\hat{V}_{1:L_{\kappa}}}{2\sigma^2}\right)
        \bm{\beta}_{1:L_{\kappa}} \\
        & = \bar{\bm{\theta}}_2^{\top}X_2^{\top}X_2\hat{V}_{1:L_{\kappa}} 
        \left( \hat{V}_{1:L_{\kappa}}^{\top}X_2^{\top}X_2\hat{V}_{1:L_{\kappa}}\right)^{-1} A_n
        \left(\tilde{\Sigma}_{1:L_{\kappa}}^{-1} +  \frac{\hat{V}_{1:L_{\kappa}}^{\top}X_2^{\top}X_2\hat{V}_{1:L_{\kappa}}}{2\sigma^2}\right)
        \bm{\beta}_{1:L_{\kappa}}
    \end{align*}
    and 
    \begin{align*}
        &\bm{m}_{\sigma^2}^\top \Lambda_{\sigma^2} \bm{m}_{\sigma^2}
        -\bm{\mu}_{\sigma^2,n,L_{\kappa}}^\top Q_{\sigma^2,n,L_{\kappa}} \bm{\mu}_{\sigma^2,n,L_{\kappa}} \\
        &= \bar{\bm{\theta}}_2^{\top}X_2^{\top}X_2\hat{V}_{1:L_{\kappa}} 
        \left( \hat{V}_{1:L_{\kappa}}^{\top}X_2^{\top}X_2\hat{V}_{1:L_{\kappa}}\right)^{-1} A_n 
        \left( \hat{V}_{1:L_{\kappa}}^{\top}X_2^{\top}X_2\hat{V}_{1:L_{\kappa}}\right)^{-1} \hat{V}_{1:L_{\kappa}}^{\top}  X_2^{\top}X_2 \bar{\bm{\theta}}_2,
    \end{align*}
    both of which are of order $o(1)$ from Assumption~\ref{ass:emp2} and Lemma~\ref{lem:lambdaL}. Since $ R_{\mD_1,L_{\kappa}}$ is compact, we obtain as $n\to \infty$,
    \begin{align*}
        \sup_{\bm{\beta}_{1:L_{\kappa}} \in R_{\mD_1,L_{\kappa}}} \left| \pi_{\kappa}(L_{\kappa})  - \frac{g_n^\infty(\bm{\beta}_{1:L_{\kappa}}) }{g_n(\bm{\beta}_{1:L_{\kappa}})}\right| \le \left| \pi_{\kappa}(L_{\kappa}) - 1\right| + \sup_{\bm{\beta}_{1:L_{\kappa}} \in R_{\mD_1,L_{\kappa}}}\left| \frac{g_n^\infty(\bm{\beta}_{1:L_{\kappa}}) }{g_n(\bm{\beta}_{1:L_{\kappa}})} - 1 \right|
        \to 0.
    \end{align*}
    
    Finally, since $f(L_{\kappa})-f(k)= \omega(\log U_{\kappa})$ for $k=L_{\kappa}+1,\ldots,U_{\kappa}$, we have
    \begin{align*}
        \left\| \sum_{k=L_{\kappa}+1}^{U_{\kappa}} \pi_{\kappa}(k)\Pi(\cdot \mid \mD,\sigma^2, k)\right\|_{\mathrm{TV}} \le \frac{\sum_{k=L_{\kappa}+1}^{U_{\kappa}}\exp\{f(k) - f(L_{\kappa})\} }{\sum_{k=L_{\kappa}}^{U_{\kappa}} \exp\{f(k) - f(L_{\kappa})\}},
    \end{align*}
    which converges to zero. This implies that \eqref{eq:TVs} converges to zero as $n\to \infty$ and completes the proof.
\end{proof}

\section{Supportive Result}

\begin{lemma} \label{lem:l2ball}
    Let $B_2^k$ denote the unit ball in $\mathbb{R}^k$ with respect to the Euclidean norm $\|\cdot\|_2$, and let $N_{1/2} \subset B_2^k$ be a minimal $1/2$-covering of $B_2^k$. Then, for any vector $\bm{\beta} \in \mathbb{R}^k$, it holds that
    \begin{equation}
        \|\bm{\beta}\|_2 \leq 2 \max_{\bm{z} \in N_{1/2}} \bm{z}^\top \bm{\beta}.
    \end{equation}
\end{lemma}

\begin{proof}[of Lemma \ref{lem:l2ball}]
    Consider the dual representation of the Euclidean norm, $\|\bm{\beta}\|_2 = \max_{\bm{h} \in B_2^k} \bm{h}^\top \bm{\beta}$.
    Since $N_{1/2}$ is a $1/2$-covering of $B_2^k$, for any $\bm{h} \in B_2^k$, there exists a vector $\bm{z} \in N_{1/2}$ such that $\|\bm{h} - \bm{z}\|_2 \leq \frac{1}{2}$.
    Hence, for any $\bm{h} \in B_2^k$ and the corresponding $\bm{z}$, we have
    \[
        \bm{h}^\top \bm{\beta} = \bm{z}^\top \bm{\beta} + (\bm{h} - \bm{z})^\top \bm{\beta} \leq \bm{z}^\top \bm{\beta} + \|\bm{h} - \bm{z}\|_2 \|\bm{\beta}\|_2 \leq \bm{z}^\top \bm{\beta} + \frac{1}{2} \|\bm{\beta}\|_2.
    \]
    Taking the maximum over all $\bm{h} \in B_2^k$, we obtain
    \[
        \|\bm{\beta}\|_2 = \max_{\bm{h} \in B_2^k} \bm{h}^\top \bm{\beta} \leq \max_{\bm{z} \in N_{1/2}} \bm{z}^\top \bm{\beta} + \frac{1}{2} \|\bm{\beta}\|_2.
    \]
    Rearranging terms yields $\|\bm{\beta}\|_2 \leq 2 \max_{\bm{z} \in N_{1/2}} \bm{z}^\top \bm{\beta}$.
\end{proof}

\begin{lemma}\label{lem:cov}
Let $S$ be a compact subset of $\mathbb{R}^p$, $A$ be a $p \times p$ positive semi-definite matrix, and $\delta \in \mathbb{R}_+$. The covering number of $S$ with respect to the pseudo norm $\|\cdot\|_{A}$ satisfies
\begin{equation} 
    \mathcal{N}(\delta, S, \|\cdot\|_{A})
    =  \mathcal{N}(\delta, A^{1/2}S, \|\cdot\|_{2}),
\end{equation}
where $A^{1/2}$ denotes the symmetric square root of $A$, and $\|\cdot\|_{2}$ is the standard Euclidean norm on $\mathbb{R}^p$.
\end{lemma}

\begin{proof}[of Lemma~\ref{lem:cov}]
First, recall that the pseudo norm $\|\cdot\|_{A}$ is defined by $\|x\|_{A} = \|A^{1/2}x\|_{2}$ for all $x \in \mathbb{R}^p$.  Let $\mathcal{N} = \mathcal{N}(\delta, S, \|\cdot\|_{A})$ and $\mathcal{N}' = \mathcal{N}(\delta, A^{1/2}S, \|\cdot\|_{2})$. 

Suppose that $\{\phi_1, \ldots, \phi_{\mathcal{N}'}\}$ is a $\delta$-covering of $A^{1/2}S$ with respect to $\|\cdot\|_{2}$. For each $i$, since $\phi_i \in A^{1/2}S$, there exists $\psi_i \in S$ such that $\phi_i = A^{1/2}\psi_i$. Consider any $\xi \in S$. Then, $A^{1/2}\xi \in A^{1/2}S$, and there exists $i \in \{1, \ldots, \mathcal{N}'\}$ such that 
\[
\|A^{1/2}\xi - \phi_i\|_{2} \leq \delta.
\]
Substituting $\phi_i = A^{1/2}\psi_i$, we have
\[
\|A^{1/2}(\xi - \psi_i)\|_{2} = \|\xi - \psi_i\|_{A} \leq \delta.
\]
Thus, $\{\psi_1, \ldots, \psi_{\mathcal{N}'}\}$ forms a $\delta$-covering of $S$ with respect to $\|\cdot\|_{A}$, implying $\mathcal{N} \leq \mathcal{N}'.$

Conversely, suppose that $\{\psi_1, \ldots, \psi_{\mathcal{N}}\}$ is a $\delta$-covering of $S$ with respect to $\|\cdot\|_{A}$. For each $i$, let $\phi_i = A^{1/2}\psi_i$. Consider any $\xi \in A^{1/2}S$. Then, there exists $s \in S$ such that $\xi = A^{1/2}s$. Since $\{\psi_1, \ldots, \psi_{\mathcal{N}}\}$ is a $\delta$-covering of $S$, there exists $i \in \{1, \ldots, \mathcal{N}\}$ such that 
\[
\|s - \psi_i\|_{A} \leq \delta.
\]
Applying $A^{1/2}$, we obtain
\[
\|A^{1/2}s - A^{1/2}\psi_i\|_{2} = \| \xi - \phi_i \|_{2} \leq \delta.
\]
Thus, $\{\phi_1, \ldots, \phi_{\mathcal{N}}\}$ forms a $\delta$-covering of $A^{1/2}S$ with respect to $\|\cdot\|_{2}$, suggesting $\mathcal{N}' \leq \mathcal{N}$.
\end{proof}

\begin{lemma}\label{lem:integral0}
Let $f_1$ and $f_2$ be any positive integrable functions on a measure space $(\mS, \mB, \mu)$, and let $B\in\mB$ be a set with $\int_B f_1 \mathrm{d}\mu>0$ and $\int_B f_2\mathrm{d}\mu>0$. Then for any constant $0 < c \le 1$, we have
\begin{align*}
    \int_B \left|\frac{cf_1 }{\int_{B} f_1 \mathrm{d}\mu } - \frac{f_2 }{\int_{B} f_2 \mathrm{d}\mu } \right| \mathrm{d}\mu
    \le 2\frac{\int_B \left|cf_1 - f_2 \right| \mathrm{d}\mu }{\int_{B} f_1 \mathrm{d}\mu} 
    + 1-c.
\end{align*}
\end{lemma}
\begin{proof}[of Lemma \ref{lem:integral0}]
For any $s\in \mS$, we begin by expressing the integrand in the left-hand side
\begin{align}
    \frac{cf_1(s) }{\int_{B} f_1 \mathrm{d}\mu } - \frac{f_2(s) }{\int_{B} f_2 \mathrm{d}\mu }  
    &= \frac{c(\int_{B} f_2 \mathrm{d}\mu) f_1(s) - (\int_{B} f_1 \mathrm{d}\mu) f_2(s)}{(\int_{B} f_1 \mathrm{d}\mu)(\int_{B} f_2 \mathrm{d}\mu)} \\
    &= \frac{c(\int_{B} f_2 \mathrm{d}\mu) f_1(s) - (\int_{B} f_2 \mathrm{d}\mu) f_2(s) + (\int_{B} f_2 \mathrm{d}\mu) f_2(s) - (\int_{B} f_1 \mathrm{d}\mu) f_2(s)}{(\int_{B} f_1 \mathrm{d}\mu)(\int_{B} f_2 \mathrm{d}\mu)} \\
    &= \frac{ (\int_{B} f_2 \mathrm{d}\mu)(cf_1(s) - f_2(s))}{(\int_{B} f_1 \mathrm{d}\mu)(\int_{B} f_2 \mathrm{d}\mu)} + \frac{ (\int_{B} f_2 \mathrm{d}\mu) f_2(s) - (\int_{B} f_1 \mathrm{d}\mu) f_2(s)}{(\int_{B} f_1 \mathrm{d}\mu)(\int_{B} f_2 \mathrm{d}\mu)}.
\end{align}
Taking absolute values and integrating over $B$, the triangle inequality yields
\begin{align*}
    \int_B \left|\frac{cf_1 }{\int_{B} f_1 \mathrm{d}\mu } - \frac{f_2 }{\int_{B} f_2 \mathrm{d}\mu } \right| \mathrm{d}\mu
    &\le \frac{\int_B \left|cf_1 - f_2 \right| \mathrm{d}\mu }{\int_{B} f_1 \mathrm{d}\mu} 
    + \frac{\int_{B} |f_1 - f_2| \mathrm{d}\mu}{\int_{B} f_1 \mathrm{d}\mu} \\
    &\le 2\frac{\int_B \left|cf_1 - f_2 \right| \mathrm{d}\mu }{\int_{B} f_1 \mathrm{d}\mu} 
    + 1-c,
\end{align*}
which is the desired inequality.
\end{proof}

\begin{lemma}  \label{lem:eigen-error}
    Let $\Sigma$ be a positive definite matrix with eigenvalues $\lambda_1 \geq \lambda_2 \geq \dots \geq \lambda_p > 0$, and let $\hat{\Sigma}$ be an estimator of $\Sigma$ such that $\|\hat{\Sigma} - \Sigma\|_{\text{op}} \leq \lambda_1 \rho_n$ with probability at least $1 - \ell_n$, where $\rho_n = o(\lambda_{L+1})$. Denote by $\hat{\lambda}_i$ the $i$-th eigenvalue of $\hat{\Sigma}$ for $i = 1, \dots, p$. Then, with probability at least $1 - \ell_n$, the following inequalities hold for $i = 1, \dots, p$:
    \begin{equation}
        \lambda_i - \lambda_1 \rho_n < \hat{\lambda}_i < \lambda_i + \lambda_1 \rho_n.
    \end{equation}
    Moreover, for $i = 1, \dots, L+1$, the estimated eigenvalues satisfy
    \begin{equation}
        \lambda_i \lesssim \hat{\lambda}_i \lesssim \lambda_i.
    \end{equation}
\end{lemma}

\begin{proof}[of Lemma~\ref{lem:eigen-error}]
    By Weyl's inequality, for each $i = 1, \dots, p$, the difference between the $i$-th eigenvalues of $\hat{\Sigma}$ and $\Sigma$ is bounded as
    \begin{equation}
        |\hat{\lambda}_i - \lambda_i| \leq \|\hat{\Sigma} - \Sigma\|_{\text{op}}.
    \end{equation}
    According to the given assumption, with probability at least $1 - \ell_n$, we have
    \begin{equation}
        \|\hat{\Sigma} - \Sigma\|_{\text{op}} \leq \lambda_1 \rho_n, 
    \end{equation}
    implying, with probability at least $1 - \ell_n$, we have $\lambda_i - \lambda_1 \rho_n \leq \hat{\lambda}_i \leq \lambda_i + \lambda_1 \rho_n$.
    Since $\rho_n = o(\lambda_{L})$, it follows that for each $i = 1, \dots, L+1$, the perturbation $\lambda_1 \rho_n$ is asymptotically negligible relative to $\lambda_i$. Consequently, we have $\lambda_i \lesssim \hat{\lambda}_i \lesssim \lambda_i$, for each $i = 1, \dots, L+1$, completing the proof.
\end{proof}

\begin{lemma}\label{lem:lambdaL}
    Let the data be split into $\mathcal{D}_1 = \{(\bm{x}_i, y_i)\}_{i=1}^{n/2}$ and $\mathcal{D}_2 = \{(\bm{x}_i, y_i)\}_{i=n/2+1}^{n}$, assuming $n$ is even.
    Suppose that the true covariance matrix $\Sigma\in\mathbb{R}^{p\times p}$ is positive definite with eigenvalues $\lambda_1\ge \lambda_2\ge \cdots\ge \lambda_p>0$ and that empirical covariance matrices $\hat{\Sigma} = \frac{2}{n}\sum_{i=1}^{n/2} \bm{x}_i\bm{x}_i^\top$ and $\hat{\Sigma}_2=\frac{2}{n}\sum_{i=n/2+1}^{n} \bm{x}_i\bm{x}_i^\top$ satisfy Assumption~\ref{ass:emp}, and let the $L$-th eigen-gap $\Delta_L=\lambda_{L}-\lambda_{L+1}$ be positive. If \(\rho_n=o\bigl(\lambda_{L}\,\Delta_L\bigr)\), and if we denote by \(\hat{V}_{1:L}\in\mathbb{R}^{p\times L}\) the matrix whose columns are the top \(L\) eigenvectors of \(\hat{\Sigma}\) (so that \(\hat{V}_{1:L}^\top \hat{V}_{1:L}=I_{L}\)), then with probability at least \(1-2\ell_n\) we have 
    \begin{equation}
    \lambda_{\min}\left(\hat{V}_{1:L}^{\top}\hat{\Sigma}_2\hat{V}_{1:L}\right) \ge c\lambda_{L},
    \end{equation}
    for some constant $c>0$. Consequently, with probability at least \(1-2\ell_n\),
    \begin{equation}
    \left\|\left(\hat{V}_{1:L}^{\top}\hat{\Sigma}_2\hat{V}_{1:L}\right)^{-1}\right\|_{\mathrm{op}} = O\left(\lambda_{L}^{-1}\right).
    \end{equation}
\end{lemma}

Before beginning the proof, we introduce some notions.
Let \(U\) and \(V\) be matrices with orthonormal columns that span two subspaces of the same dimension \(k\). The principal angles \(\theta_1, \dots, \theta_k\) between these subspaces are defined so that the singular values of \(U^\top V\) are $\cos\theta_1, \dots, \cos\theta_k$. Then, we define $\sin\Theta$ as the diagonal matrix with entries $\sin\theta_1, \dots, \sin\theta_k$; in particular, its operator norm is $\max_{1\le i\le k}\sin\theta_i$.

\begin{proof}[of Lemma~\ref{lem:lambdaL}]
Write
\begin{equation}
    \hat{V}_{1:L}^\top\hat{\Sigma}_2\,\hat{V}_{1:L} = \hat{V}_{1:L}^\top\Sigma\,\hat{V}_{1:L} + \hat{V}_{1:L}^\top(\hat{\Sigma}_2-\Sigma)\hat{V}_{1:L}.
\end{equation}
Applying Weyl's inequality yields
\begin{equation}
    \lambda_{\min}\left(\hat{V}_{1:L}^\top\hat{\Sigma}_2\hat{V}_{1:L}\right) \ge \lambda_{\min}\left(\hat{V}_{1:L}^\top\Sigma\hat{V}_{1:L}\right) - \left\|\hat{V}_{1:L}^\top(\hat{\Sigma}_2-\Sigma)\hat{V}_{1:L}\right\|_{\mathrm{op}}.
\end{equation}

First, we evaluate $\lambda_{\min}\left(\hat{V}_{1:L}^\top\Sigma\hat{V}_{1:L}\right)$. 
Let $V_{1:L}\in\mathbb{R}^{p\times L}$ be the matrix whose columns are the eigenvectors corresponding to the top $L$ eigenvalues of $\Sigma$, with $V_{1:L}^\top V_{1:L}=I_{L}$. By the Davis--Kahan $\sin\Theta$ theorem~\citep[e.g.][]{pensky2024davis} and Assumption~\ref{ass:emp}, there exists an orthogonal matrix $O\in\mathbb{R}^{L\times L}$ such that
\begin{equation}
    \|\hat{V}_{1:L} - V_{1:L}O\|_{\mathrm{op}} 
    \le \sqrt{2} \| \sin\Theta(\hat{V}_{1:L}, V_{1:L})  \|_{\mathrm{op}}
    \le \epsilon_n,\quad \mathrm{with }\ \epsilon_n = O\left(\frac{\rho_n\lambda_1}{\Delta_L}\right).
\end{equation}
We express $\hat{V}_{1:L}$ as
\begin{equation}
    \hat{V}_{1:L} = V_{1:L}O + E, \quad \mathrm{with }\ \|E\|_{\mathrm{op}} \le \epsilon_n.
\end{equation}
Then, we have
\begin{equation}
    \hat{V}_{1:L}^\top\Sigma\hat{V}_{1:L} = O^\top{V}_{1:L}^\top\Sigma V_{1:L}O + E', \quad \mathrm{with }\ \|E'\|_{\mathrm{op}} \le 2\lambda_1\epsilon_n.
\end{equation}
and from Weyl's inequality,
\begin{equation}
\lambda_{\min}\left(\hat{V}_{1:L}^\top\Sigma\hat{V}_{1:L}\right) \ge \lambda_{L} - 2\lambda_1\epsilon_n .
\end{equation}

Next, we bound $\|\hat{V}_{1:L}^\top(\hat{\Sigma}_2-\Sigma)\hat{V}_{1:L}\|_{\mathrm{op}}$.
Since $\hat{V}_{1:L}$ is orthonormal, from Assumption~\ref{ass:emp}, with probability at least $1-\ell_n$,
\begin{equation}
    \left\|\hat{V}_{1:L}^\top(\hat{\Sigma}_2-\Sigma)\hat{V}_{1:L}\right\|_{\mathrm{op}} \le \left\|\hat{\Sigma}_2-\Sigma\right\|_{\mathrm{op}} \le \lambda_1\rho_n.
\end{equation}

Thus, we obtain the union bound, with probability at least $1-2\ell_n$,  
\begin{equation}
    \lambda_{\min}\left(\hat{V}_{1:L}^\top\hat{\Sigma}_2\hat{V}_{1:L}\right) \ge \lambda_{L} - \lambda_1(2\epsilon_n +\rho_n).
\end{equation}
Since both $\epsilon_n$ and $\rho_n$ are $o(\lambda_{L})$, we deduce that
\begin{equation}
    \lambda_{\min}\left(\hat{V}_{1:L}^\top\hat{\Sigma}_2\hat{V}_{1:L}\right) \ge \tilde{c}\lambda_{L},
\end{equation}
for some constant $\tilde{c}>0$. The operator norm of the inverse is
\begin{equation}
\left\|\left(\hat{V}_{1:L}^\top\hat{\Sigma}_2\hat{V}_{1:L}\right)^{-1}\right\|_{\mathrm{op}} = \frac{1}{\lambda_{\min}\left(\hat{M}\right)} \le \frac{1}{\tilde{c}\lambda_{L}} = O\left(\lambda_{L}^{-1}\right).
\end{equation}
This completes the proof.
\end{proof}

\bibliographystyle{alpha}
\bibliography{main}

\end{document}